\documentclass[a4paper, 10pt]{article}
\usepackage[italian, english]{babel}
\usepackage{graphicx}
\usepackage{epstopdf}
\usepackage[T1]{fontenc} % recommended for languages with accents
\usepackage[utf8]{inputenc}
\usepackage{hyperref}
\usepackage{amsthm}
\usepackage{amsfonts}
\usepackage{xcolor}

%% code from mathabx.sty and mathabx.dcl
\DeclareFontFamily{U}{mathx}{\hyphenchar\font45}
\DeclareFontShape{U}{mathx}{m}{n}{
      <5> <6> <7> <8> <9> <10>
      <10.95> <12> <14.4> <17.28> <20.74> <24.88>
      mathx10
      }{}
\DeclareSymbolFont{mathx}{U}{mathx}{m}{n}
\DeclareFontSubstitution{U}{mathx}{m}{n}
\DeclareMathAccent{\widecheck}{0}{mathx}{"71}
\DeclareMathAccent{\wideparen}{0}{mathx}{"75}

\usepackage{spverbatim}
\usepackage{appendix}

\usepackage{amssymb}
\usepackage{mathtools}

\usepackage{geometry}
\geometry{
	left=35mm,
	top=35mm,
	bottom=35mm,
	right=35mm
}

\graphicspath{ {media/} }

\newtheorem{lemma}{Lemma}

\newtheorem{teo}{Theorem}
\newtheorem{prop}{Proposition}

\newtheorem{con}{Conjecture}

\newtheorem{rem}{Remark}

\makeatletter
\g@addto@macro\bfseries{\boldmath}
\makeatother
\date{}
\usepackage{tabularx,booktabs}

\title{Improved Bounds for $(b, k)$-hashing}
\author{Stefano Della Fiore\footnote{DII, University of Brescia, Via Branze 38,  25123 Brescia, Italy.  Emails: s.dellafiore001@unibs.it, marco.dalai@unibs.it. 
}, Simone Costa\footnote{DICATAM, University of Brescia, Via Branze 43,  25123  Brescia, Italy.  Email: simone.costa@unibs.it. \newline \indent This paper was presented in part at ISIT 2021.}, Marco Dalai\footnotemark[1]}
\begin{document}
\maketitle
\begin{abstract}
For fixed integers $n$ and $b\geq k$, let $A(b,k,n)$ the largest size of a subset of $\{1,2,\ldots,b\}^n$ such that, for any $k$ distinct elements in the set, there is a coordinate where they all differ. Bounding $A(b,k,n)$ is a problem of relevant interest in information theory and in computer science, relating to the zero-error capacity with list decoding and with the study of $(b, k)$-hash families of functions.  It is known that, for fixed $b$ and $k$, $A(b,k,n)$ grows exponentially in $n$. In this paper, we determine new exponential upper bounds for different values of $b$ and $k$.

A first bound on $A(b,k,n)$ for general $b$ and $k$ was derived by Fredman and Koml\'os in the '80s and improved for certain $b\neq k$ by K\"orner and Marton and by Arikan. Only very recently better bounds were derived for general $b$ and $k$ by Guruswami and Riazanov, while
stronger results for small values of $b=k$ were obtained by Arikan, by Dalai, Guruswami and Radhakrishnan, and by Costa and Dalai. In this paper, we strengthen the bounds for some specific values of $b$ and $k$. Our contribution is a new computational method for obtaining upper bounds on the values of a quadratic form defined over discrete probability distributions in arbitrary dimensions, which emerged as a central ingredient in recent works. The proposed method reduces an infinite-dimensional problem to a finite one, which we manage to further simplify by means of  a series of optimality conditions.
\end{abstract}
\noindent {\bf Keywords}: perfect hashing, list decoding, zero-error capacity, extremal combinatorics\\
\noindent {\bf MSC}: 68R05
\section{Introduction}

The problem considered in this paper has a twofold history that connects it naturally with combinatorial aspects of computer science and information theory. Let $b$, $k$, and $n$ be integers and let $C$ be a subset of $\{1,2,\ldots,b\}^n$ with the property that for any $k$ distinct elements we can find a coordinate where they all differ. Such a set can be interpreted, by looking at it coordinate-wise, as a family of $n$ hashing functions on some universe of size $|C|$.  The required property then says that the family is a perfect hash family, that is, any $k$ elements in the universe are $k$-partitioned by at least one function. Alternatively, $C$ can be interpreted as a code of rate $\log(|C|)/n$ for communication over a channel with $b$ inputs. Assume that the channel is a $b/(k-1)$ channel, meaning that any $k-1$ of the $b$ inputs share one output but no $k$ distinct inputs do (see Figure \ref{fig:channel}). The required property for $C$ is what is needed for the receiver to always be able to produce a list of $k-1$ codewords of $C$ which must necessarily include the one that was sent; that is, zero-error communication with $(k-1)$-list decoding is possible. Indeed, the condition implies that any $k$ codewords use, in at least one coordinate,  $k$ different symbols, and one of them will not be compatible with the received symbol in that coordinate. We refer the reader to \cite{Elias1}, \cite{FredmanKomlos}, \cite{Korner3}, \cite{nilli} for an overview of the more general context of this problem. Some recent important results in a different asymptotic setting can be found in \cite{Jaikumar2}.

\begin{figure}[t]
\centering
\includegraphics[scale=0.8]{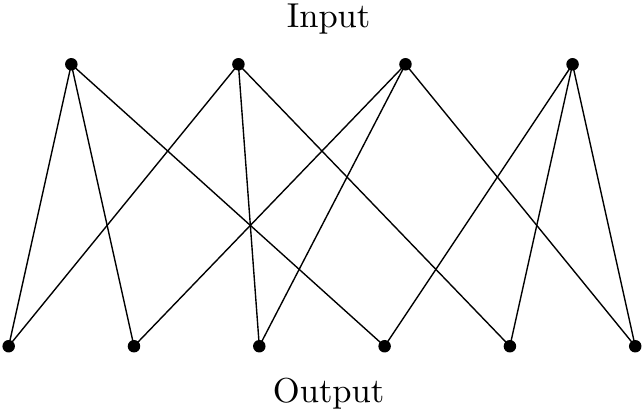}
\caption{A $4/2$ channel. Edges represent positive probabilities. Here, zero-error communication is possible  when decoding with list-size equal to $2$.}
\label{fig:channel}
\end{figure}

We will call any subset $C$ of $\{1,2,\ldots,b\}^n$ with the described property a $(b, k)$-hash code. For the reasons mentioned above, bounding the size of $(b, k)$-hash codes is a combinatorial problem that has been of interest in both computer science and information theory. Let $A(b,k,n)$ be the largest size of such a code. It is known that for fixed $b$ and $k$, $A(b,k,n)$ grows exponentially in $n$, and a challenging problem consists in bounding the exponent. We will thus study the quantity
\begin{equation}
R_{(b,k)}=\limsup_{n\to \infty}\frac{1}{n}\log A(b,k,n)\,.
\end{equation}
Note that, throughout the paper,  all logarithms are to base 2. 

Few lower bounds on $R_{(b,k)}$ are known. First results in this sense were given by \cite{FredmanKomlos}, \cite{Elias1} and a better bound was derived by \cite{Korner2} for $b=k=3$. More recently, new lower bounds were derived in \cite{xing-yuang} for infinitely many other values of $k$. 
The first, landmark result concerning the upper bounds was obtained by Fredman-Koml\'os \cite{FredmanKomlos}, who showed that 
\begin{equation}
R_{(b,k)} \leq  \frac{b^{\underline{k-1}}}{b^{k-1}} \log (b-k+2)\,,
\label{eq:fredmankomlos}
\end{equation}
where $b^{\underline{k-1}} = b (b-1) \cdots (b-k+2)$.
Progress has since been rare.
A generalization of the bound given in equation \eqref{eq:fredmankomlos} was derived by K\"orner and Marton \cite{Korner2} in the form
\begin{equation}
R_{(b,k)} \leq \min_{2 \leq j \leq k-2} \frac{b^{\underline{j+1}}}{b^{j+1}} \log \frac{b-j}{k-j-1}\,.
\label{eq:kornermarton}
\end{equation}
Nilli \cite{nilli} provided an elementary proof of \eqref{eq:kornermarton} without considerations of graph entropy or hypergraph entropy.
This bound was further improved for different values of $b$ and $k$ by Arikan \cite{Arikan}.
In the case $b=k$, an improvement was first obtained for $k=4$ in \cite{Arikan2} and then in \cite{DalaiVenkatJaikumar}, \cite{DalaiVenkatJaikumar2}.  The latter only focuses on $b=k=4$, but the procedure can be extended to general $b$ and $k$. As shown in the next sections, it leads to the following bound.
\begin{lemma}
For general $b$ and $k$, we have
\begin{equation}
	R_{(b,k)}  \leq \left(\frac{1}{\log b} + \frac{b^2}{(b^2-3b+2) \log \frac{b-2}{k-3}}  \right)^{-1}.
	\label{eq:dalaivenkatbound}
\end{equation}
\label{dalaivenkat_lemma}
\end{lemma}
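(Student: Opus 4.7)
The plan is to extend the argument of Dalai-Guruswami-Radhakrishnan \cite{DalaiVenkatJaikumar,DalaiVenkatJaikumar2} from $b=k=4$ to arbitrary $b \geq k \geq 4$. The central observation is that the bound in the lemma has the form $R \leq \log b \cdot B/(\log b + B)$ with $B := \frac{b^2-3b+2}{b^2}\log\frac{b-2}{k-3}$, i.e., it combines two familiar ingredients: the trivial alphabet bound $\log b$ and the Körner-Marton bound at $j=2$, which equals $B$.

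The Körner-Marton ingredient is obtained from the fact that a $(b,k)$-hash code is a fortiori a $(b,3)$-hash code: every triple of distinct codewords is 3-separated in some coordinate, and on such a coordinate any $k-3$ additional codewords forming a bad $k$-set must use distinct symbols from the $b-2$ leftover alphabet. The standard averaging over random coordinates and triples, using $(b-1)(b-2)/b^2$ as the probability of 3-separation under the uniform per-coordinate distribution, yields $R \leq B$. The DGR refinement consists in carrying out the same averaging not under the uniform distribution but under the actual empirical per-coordinate distribution $p$ of the code, and then combining the resulting $p$-dependent KM inequality with the entropy $H(p)$ of the type. After a method-of-types reduction (which costs only $\mathrm{poly}(n)$ in the count of codewords and is therefore asymptotically free), this produces a bound of the shape $R \leq \max_{p \in \Delta_b} Q(p,p)$ for a suitable quadratic form $Q$ whose two components are an entropy-like term and a KM-weighted term.

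Evaluating and maximizing $Q$ is the key step. Invoking the $S_b$-symmetry of $Q$ (the $b$ symbols play interchangeable roles) together with a concavity argument, the maximum is attained at the uniform distribution $p = (1/b,\ldots,1/b)$, where the entropy term equals $\log b$ and the weighted KM term equals $B$; the functional form of $Q$ then yields $Q(p,p) = \log b \cdot B/(\log b + B)$, which is the harmonic-mean expression of the lemma. The main obstacle in this extension is establishing the uniform-optimum claim rigorously for general $b$ and $k$: in \cite{DalaiVenkatJaikumar2} it is verified for $b=k=4$ by an explicit low-dimensional computation, and for general parameters a cleaner symmetry-plus-Lagrangian argument is needed. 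The remainder of the extension is essentially bookkeeping: tracking how the parameters propagate through the DGR derivation when the underlying KM instance shifts from $(b,k)=(4,4)$ to general $(b,k)$, with effective parameters $(b-2,k-3)$ on the reduced alphabet after restricting to a 3-separated coordinate.
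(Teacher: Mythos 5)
Your high-level plan — extend the Dalai--Guruswami--Radhakrishnan argument to general $(b,k)$ and obtain the harmonic-mean combination of $\log b$ with the $j=2$ K\"orner--Marton quantity $B=\frac{(b-1)(b-2)}{b^2}\log\frac{b-2}{k-3}$ — is the right one and matches the paper. But the proposal has a genuine gap exactly at the step you flag as "the main obstacle" and then defer. The quantity that must be bounded is not the diagonal value $Q(p,p)$ at the global per-coordinate type $p$: it is an average $\mathbb{E}_{\omega}\bigl[\sum_{a_1\neq a_2} f_{i,a_1|\omega}f_{i,a_2|\omega}(1-f_{i,a_1}-f_{i,a_2})\bigr]$ over the subcodes $\omega$ of the prefix partition, i.e.\ a quadratic form in the subcode distributions $f_{i|\omega}$ with kernel $(1-f_{i,a_1}-f_{i,a_2})$. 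No symmetry-plus-Lagrangian argument on the diagonal can close this: to pull the expectation inside one needs the form to be \emph{concave} on the simplex, which holds only when every symbol frequency satisfies $f_{i,a}\le 1/2$. The paper therefore (i) uses concavity under that constraint to replace $f_{i|\omega}$ by its mean $f_i$ and then by the uniform distribution, and (ii) justifies the constraint by a code-modification argument: whenever some $f_{i,1}>1/2$, delete that coordinate and keep only codewords whose $i$-th symbol lies in $\{1,\dots,k-1\}$, losing a factor $(b+k-3)/(2b-2)$ in size. Making this lossless in the rate requires proving $B(b,k)>\log\frac{2b-2}{b+k-3}$ for all $b\ge k\ge 4$, which is a nontrivial case analysis (three ranges of $(b,k)$) occupying most of the paper's proof. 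None of this appears in your sketch, and without it the "maximum at uniform" claim is simply unavailable — indeed the whole point of the paper's later sections is that the bilinear form $\sum_{\omega,\mu}\lambda_\omega\lambda_\mu\Psi_j(f_{|\omega},f_{|\mu})$ is \emph{not} controlled by its diagonal at a single distribution.

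Two smaller inaccuracies: the $1/\log b$ term does not come from a method-of-types reduction or from $H(p)$; it comes from partitioning the code into subcodes sharing a common prefix of length $\ell\approx nR/\log b$ (there are $b^\ell$ prefixes), so that the first $\ell$ coordinates contribute zero to the Hansel sum because the fixed pair of codewords collides there. A constant-composition reduction would give $H(p)$ in place of $\log b$, which coincides with the paper's bound only after one already knows the optimum is uniform, so it cannot be used to prove that fact. Also, the combinatorial setup is "fix a pair $x_1,x_2$ and build $(b-2)$-partite $(k-2)$-uniform hypergraphs," not a reduction to $(b,3)$-hashing; the coefficient $(b-1)(b-2)/b^2$ is the expected value of $\mathbf{1}[x_{1,i}\neq x_{2,i}]\,(1-f_{i,x_{1,i}}-f_{i,x_{2,i}})$ at the uniform distribution, which only numerically equals the probability that three i.i.d.\ uniform symbols are distinct.
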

%The interested reader will find, upon inspection of the proof of Theorem 3 in \cite{DalaiVenkatJaikumar}, that modulo using a hypergraph version of the Hansel Lemma, the only new condition to check is that the upper bound given in \eqref{eq:dalaivenkatbound} is greater than $\log \frac{2b-2}{2b-3}$ for every $b \geq k \geq 4$. 
In \cite{venkat}, the authors prove that the Fredman-Koml\'os bound is not tight for any $b \geq k > 3$; explicit better values were given there for $b=k=5,6$, and for larger $b=k$ modulo a conjecture which is proved in \cite{costaDalai}, where further improvements are also obtained for $b=k=5, 6$. The case of $b\neq   k$  is not described in detail in \cite{venkat} but, as the authors mention, it is straightforward. We do not write here the bound since it has a complicated expression. 

In this paper, we attack some of the cases which appear not to be optimally handled by those methods. In particular, we build on the results obtained in \cite{costaDalai} and add an improvement that leads to better bounds for many pairs of $(b,k)$ values. The results of \cite{costaDalai} for $b=k$ were derived following an approach common to many recent works by introducing a symmetrization which reduces to the problem of bounding a quadratic form on probability distributions. We give a more general exposition for the general $b,k$ case, anticipating here the key lemma whose proof we give for completeness in the next section. Fix an integer $j$ in the range $2,\ldots,k-2$ and define, for probability vectors $p,q\in \mathbb{R}^b$, the function
\begin{equation}
\Psi_j(p;q) = \frac{1}{(b-j-1)!} \sum_{\sigma} p_{\sigma(1)}p_{\sigma(2)}\dots p_{\sigma(j)}q_{\sigma(j+1)} + q_{\sigma(1)}q_{\sigma(2)}\dots q_{\sigma(j)}p_{\sigma(j+1)},
\label{eq:defPsi}
\end{equation}
where $\sigma$ ranges over all permutations of $\{1,2,\ldots,b\}$. Define then
\begin{equation}
\mathbf{M}_j=\sup_\lambda \sum_{p,q}\lambda_p\lambda_q \Psi_j(p;q)
\label{eq:QuadraticForm}
\end{equation}
where $\lambda$ ranges over all probability distributions on finite sets of probability vectors in $\mathbb{R}^b$, so that $\lambda_p$ is the probability associated to the probability vector $p$. Then, the following bound holds.
\begin{lemma}
For $j=2,\ldots,k-2$,
\begin{equation}
R_{(b,k)}\leq  \left(\frac{2}{\mathbf{M}_j \log \frac{b-j}{k-j-1} }+\frac{1}{\log\left(\frac{b}{j-1}\right)}\right)^{-1}\,.
\label{Rbk_Mj_bound}
\end{equation}
\label{Rbk_Mj_lemma}
\end{lemma}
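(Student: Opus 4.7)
My plan is to extend the type-class reduction and quadratic-form arguments of \cite{FredmanKomlos,Korner2,Arikan,DalaiVenkatJaikumar,DalaiVenkatJaikumar2,costaDalai} so as to obtain the harmonic-mean form of the bound \eqref{Rbk_Mj_bound} by balancing two complementary restrictions on the rate of $C$.

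\emph{Step 1: type reduction.} Given a $(b,k)$-hash code $C \subseteq \{1,\ldots,b\}^n$ with $|C|=A(b,k,n)$, extract a subcode $C'\subseteq C$ such that every ordered $(j+1)$-tuple of distinct codewords has the same joint empirical type on $\{1,\ldots,b\}^{j+1}$. Since the number of such types is polynomial in $n$, the cost is sub-exponential and the rate is unchanged. Let $\beta \in [0,1]$ denote the common fraction of coordinates at which a $(j-1)$-tuple of codewords in $C'$ fails to display $j-1$ distinct symbols; by the type reduction this number depends only on $C'$, not on the tuple chosen.

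\emph{Step 2: two complementary bounds on $\log|C'|$.} On the $\beta n$ coordinates where a generic $(j-1)$-tuple in $C'$ already repeats some symbol, a direct counting argument on the ways to extend the tuple to a $j$-th codeword yields
\[
\log|C'| \le \beta n \,\log\frac{b}{j-1}.
\]
On the remaining $(1-\beta)n$ coordinates the $(j-1)$-tuple uses $j-1$ distinct symbols; here a Fredman-Koml\'os-type random coloring of the alphabet, combined with the $(b,k)$-hash condition applied to $k$-tuples containing the reference $(j-1)$-tuple, produces the constraint
\[
\log|C'| \le (1-\beta)n\,\frac{\mathbf{M}_j}{2}\,\log\frac{b-j}{k-j-1}.
\]
The form $\mathbf{M}_j$ arises by assigning to each coordinate the empirical conditional distribution on $\{1,\ldots,b\}$ induced by the codewords extending the $(j-1)$-tuple, placing weight $\lambda_p$ on each vector $p$ according to its frequency among coordinates, and symmetrizing over the $b!$ per-coordinate alphabet permutations. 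The resulting kernel is precisely $\Psi_j(p;q)$, and the supremum over $\lambda$ is bounded above by $\mathbf{M}_j$ by definition.

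\emph{Step 3: balance and conclude.} The value of $\beta$ depends on $C'$ and is not under our control, so we must take the worst case over $\beta \in [0,1]$. The two bounds on $\log|C'|$ are linear in $\beta$ with opposite slopes, so their minimum is maximized exactly where they coincide, producing
\[
\frac{\log|C'|}{n} \le \left(\frac{1}{\log\tfrac{b}{j-1}} + \frac{2}{\mathbf{M}_j\,\log\tfrac{b-j}{k-j-1}}\right)^{-1},
\]
which gives \eqref{Rbk_Mj_bound} upon letting $n\to\infty$.

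\emph{Main obstacle.} The technical heart lies in Step 2: identifying the correct probability measure $\lambda$ on conditional symbol distributions so that the averaged Fredman-Koml\'os success probability of a random $k$-tuple matches $\sum_{p,q}\lambda_p\lambda_q\Psi_j(p;q)$ exactly. This requires carefully decomposing the hash condition into ordered $(j+1)$-subtuples of each $k$-tuple, expressing the relevant conditional marginals in terms of the common $(j+1)$-tuple type, and fully symmetrizing over the $b!$ alphabet permutations so that the factorial normalization $(b-j-1)!$ in the definition of $\Psi_j$ emerges naturally.
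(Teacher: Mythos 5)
Your proposal has the right target — two linear constraints in a parameter that, when balanced, yield the harmonic-mean form — but the logic of how the two constraints arise is not sound, and the first of them is false as stated.

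Your ``Bound~1'' asserts $\log|C'| \le \beta n \log\frac{b}{j-1}$, where $\beta$ is the fraction of coordinates at which a typical $(j-1)$-tuple collides. This does not hold: a $(b,k)$-hash code in which a random small tuple has few or no coordinate collisions (so $\beta$ is near zero) can still be large, and no ``direct counting of extensions'' rescues the inequality because collisions of a \emph{sub}-tuple place no direct ceiling on the number of codewords. The quantity you introduce as $\beta$ is not under the prover's control and does not constrain $|C'|$. In the paper the role of this parameter is the opposite: one \emph{constructs} a prefix of length $\ell=\beta n$ and partitions $C$ into subcodes $C_\omega$ each of which uses only $j-1$ symbols per prefix coordinate. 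The number of cells in such a partition is roughly $(b/(j-1))^\ell$, so requiring each $C_\omega$ to grow with $n$ forces the \emph{upper} bound $\beta \le R/\log\frac{b}{j-1}$ on the prover's choice of $\beta$ — the inequality goes in the reverse direction from yours, and it is a feasibility constraint on a design parameter, not an a priori bound on $\log|C'|$. Consequently the closing ``worst case over $\beta$'' in your Step~3 is also the wrong logic: the paper performs a constrained maximization of the Hansel bound over a \emph{chosen} $\beta$, rather than a minimax against an adversarial one. The two formulations happen to produce the same algebra, but only the paper's version is a valid proof.

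There is a second gap in Step~2. You obtain the quadratic form by ``placing weight $\lambda_p$ on each vector $p$ according to its frequency among coordinates'' and symmetrizing over alphabet permutations. This is not how the bilinear kernel $\Psi_j$ emerges. In the paper, the measure $\lambda$ is defined over the \emph{subcodes} of the prefix partition, $\lambda_\omega=|C_\omega|/|C|$; for each fixed coordinate $i>\ell$ one writes $f_i=\sum_\mu\lambda_\mu f_{i|\mu}$, substitutes into the per-coordinate expectation $\mathbb{E}_\omega[\tau(G_i^{x_1,\ldots,x_j})]$, and symmetrizes in the two subcode indices $\omega,\mu$ to get $\tfrac12\sum_{\omega,\mu}\lambda_\omega\lambda_\mu\Psi_j(f_{i|\omega},f_{i|\mu})$, which is then bounded by $\tfrac12\mathbf{M}_j$ coordinate by coordinate. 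Without the subcode partition there is nothing over which to average to produce a bilinear form with a \emph{common} weighting $\lambda$ on both arguments, and the $(b-j-1)!$ normalisation does not ``emerge naturally'' from permutation-symmetrizing a per-coordinate type. Also note a smaller inconsistency: the paper's Hansel/Fredman--Koml\'os argument fixes $j$ codewords (not $j-1$) to form a $(k-j)$-uniform, $(b-j)$-partite hypergraph, and ``collision'' refers to $j$ coordinates failing to be distinct; your $(j-1)$-tuple framing does not line up with either the definition of the hypergraph or the $j-1$ symbols per prefix coordinate in the subcodes.
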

The results in \cite{costaDalai} were obtained using in \eqref{Rbk_Mj_bound}, for $b=k$ and $j=k-2$, the upper bound 
\begin{equation}
\mathbf{M}_j \leq \max_{p,q}\Psi_j(p;q)\,.
\label{eq:PsiMax}
\end{equation}
A weakness in this bound comes from the fact that distributions $p$ and $q$ that maximize $\Psi_j(p;q)$ exhibit in many cases some opposing asymmetries, in the sense that they give higher probabilities to different symbols. When used as a replacement for \emph{each} of the pairs of $p$ and $q$ in \eqref{eq:QuadraticForm}, we have a rather conservative bound, because pairs $(p,q)$ which give high values for $\Psi_j(p;q)$ will give low values for $\Psi_j(p;p)$ and $\Psi_j(q;q)$, and equation \eqref{eq:QuadraticForm} contains a weighted contribution from all pairings of $p$ and $q$. In this paper, we present a computational method for obtaining more refined bounds on $\mathbf{M}_j$ for general $b, k$ values which lead to improvements on the best-known bounds on $R_{(b,k)}$ for many $b,k$ pairs.

%In other words, observed that \eqref{simmetrizzata2} is a quadratic form in the distribution $\lambda$ with symmetric matrix $\Psi_j(p;q)$, if the matrix has maximum entry values in some off-diagonal $(p,q)$-positions to which there correspond small ``in-diagonal'' values at $(p,p)$ and $(q,q)$, then using this  as a bound for the whole quadratic form can be quite a conservative estimate.

In Table \ref{tab:bkbounds} we give a comparison between bounds \eqref{eq:dalaivenkatbound} and \eqref{eq:kornermarton}, the bounds of \cite{Arikan} and \cite{venkat} and our new bounds for different values of $b$ and $k$. In Table \ref{tab:dalaivenkatjaikumar} we show that for some  $(b,k)$-cases the bound \eqref{eq:dalaivenkatbound} is the best bound among all the current known bounds, in particular when $b$ is much larger than $k$. Finally, in Table \ref{tab:venkatriazanov} we provide some $(b,k)$-cases where the bound of \cite{venkat} is the current best known bound, in particular when $b$ and $k$ are large and nearly equal. Clearly, the cases reported in Tables \ref{tab:dalaivenkatjaikumar} and \ref{tab:venkatriazanov} are not exhaustive, but they have been properly selected to point out that our method does not always provide the best bounds.
The integers in the parentheses for bounds \cite{venkat}, \cite{Arikan} and \cite{Korner2} in Table \ref{tab:dalaivenkatjaikumar} represent the optimal value of a parameter which has the same role as $j$ in \eqref{eq:kornermarton}. When its value is not reported, as well as in Tables \ref{tab:bkbounds} and \ref{tab:venkatriazanov}, it is equal to $k-2$ for our bounds and for bounds of \cite{venkat}, \cite{Arikan} and \cite{Korner2}. Instead, for bound \eqref{eq:dalaivenkatbound} it is always equal to $2$.
 
%In Table \ref{tab:newbkbounds} we compare our new bounds with the best known bounds for $b=k=5,\ldots,8$ and for $(b,k) = (6,5)$, $(9,8)$, $(10,9)$, $(11, 10)$, $(12,10)$, $(13, 11)$, $(14, 12)$, $(15,13)$.

%\begin{table}[h]
%\centering
%\begin{tabular}{|c|l|l|l|l|}
%\hline
%$k$ & 5 & 6 & 7 & 8 \\
%\hline
%Bound from \cite{FredmanKomlos} & 0.19200 & 0.092593 & 0.04284 & 0.019227\\
%\hline
%Best known & 0.1697 \cite{costaDalai} & 0.0875 \cite{costaDalai} & 0.04279 \cite{venkat}& 0.019213 \cite{venkat}\\
%\hline
%This work & 0.1690 & 0.0848 & 0.04090 & 0.018889\\
%\hline
%\end{tabular}
%\caption{Numerical values for upper bounds on $R_k$. The bounds for $k=7,8$ in \cite{venkat} are modulo a conjecture which is proved in \cite{costaDalai}. All numbers are rounded upwards.}
%\label{tab:numbounds}
%\end{table}

\begin{table}[!htbp]
\footnotesize
\def\arraystretch{1.17}
\caption{Upper bounds on $R_{(b,k)}$. All numbers are rounded upwards.}
\centering
\begin{tabularx}{\linewidth}{c@{\extracolsep{\fill}}c@{\extracolsep{\fill}}c@{\extracolsep{\fill}}c@{\extracolsep{\fill}}c|@{\extracolsep{\fill}}c@{\extracolsep{\fill}}c@{\extracolsep{\fill}}c@{\extracolsep{\fill}}c@{\extracolsep{\fill}}c}
$(b,k)$ & Our method & \cite{Arikan} & \cite{venkat} & \cite{Korner2} & $(b,k)$ & Our method & \cite{Arikan} & \cite{venkat} & \cite{Korner2} \\
\hline
%$(5,4)$ & 0.66126 &  \textbf{0.57303} & 0.61142 & 0.74834  & 0.73697(0)\\
%$(6,4)$ & 0.87963 & \textbf{0.77709} & 0.83904 & 1.09604 & 1.00000(0)\\
%$(7,4)$ & 1.03711 & \textbf{0.94372}  & 1.02931 & 1.40593  &  1.22239(0) \\
$(5,5)$ & \textbf{0.16894}$^1$  & 0.23560 & 0.19079 & 0.19200 & $(6,5)$ & \textbf{0.34512}$^1$ & 0.44149 & 0.43207 &  0.44027\\
%$(7,5)$ & \textbf{0.50532} & 0.62823 & 0.62046 & 0.68735 & 0.69971(3)\\
%$(8,5)$ & \textbf{0.64572} & 0.77235 & 0.80117 & 0.93736 &  0.95236(3)\\
%$(9,5)$ & \textbf{0.76951} & 0.89625 & 0.96034 &  1.17501(3) &  1.16993(0)\\
$(6,6)$ & \textbf{0.08475}$^1$ & 0.15484 & 0.09228 & 0.09260 & $\underline{(7,6)}$ & \textbf{0.19897}$^2$ & 0.30554 & 0.23524 & 0.23765 \\
$\underline{(8,6)}$ & \textbf{0.31799}$^2$  & 0.44888 & 0.40330 & 0.41016 & $\underline{(9,6)}$ & \textbf{0.43237}$^2$  & 0.58303 & 0.58486 & 0.59455 \\
$\underline{(10,6)}$ & \textbf{0.53909}$^2$ & 0.73304 & 0.76977 & 0.78170 & $\underline{(11,6)}$ & \textbf{0.63766}$^2$ & 0.87038 & 0.95285 & 0.96640\\
$\underline{(12,6)}$ & \textbf{0.72848}$^2$ & 0.99588  & 1.13118 & 1.14584 & $\underline{(13,6)}$ & \textbf{0.81227}$^2$ & 1.11084  & 1.30322 & 1.31855\\
$\underline{(14,6)}$ & \textbf{0.88978}$^2$ & 1.21657  & 1.46822 & 1.48388 & $(7,7)$ & \textbf{0.04090}$^1$ & 0.09747 & 0.04279 & 0.04284\\
%$\underline{(15,6)}$ & \textbf{0.96172}  & 1.31426  & 1.62593 & 1.58497(0)\\
%$(20,6)$ & \textbf{1.25729} & 1.46234 & 1.68233 & 2.20776(2) &  2.00000(0)\\
$\underline{(8,7)}$ & \textbf{0.10865}$^2$  & 0.20340 & 0.12134 & 0.12189 & $\underline{(9,7)}$ & \textbf{0.19054}$^2$ & 0.31204 & 0.22547 & 0.22761  \\
$\underline{(10,7)}$ & \textbf{0.27741}$^2$ & 0.41982 & 0.34615 & 0.35108  & $\underline{(11,7)}$ & \textbf{0.36424}$^2$  & 0.52472  & 0.47856 & 0.48538 \\
$\underline{(12,7)}$ & \textbf{0.44850}$^2$ & 0.65160  & 0.61698 & 0.62549 & $\underline{(13,7)}$ & \textbf{0.52902}$^2$ & 0.77148  & 0.75796 & 0.76792\\
$\underline{(14,7)}$ & \textbf{0.60538}$^2$ & 0.88384  & 0.89915 & 0.91027 & $(8,8)$ & \textbf{0.01889}$^1$& 0.05769 & 0.01922 & 0.01923\\
%$\underline{(15,7)}$ & \textbf{0.67754} & 0.98883  & 1.03890 & 1.05094(5)\\
%$(15,7)$ & \textbf{0.67754} & 1.01731 & 0.98883 & 1.03890(5) &  1.05094(5) \\
%$(20,7)$ & \textbf{0.98264} & 1.29806 & 1.42199 & 1.68979(5) & 1.70360(5) \\
$(9,8)$ & \textbf{0.05616}$^1$ & 0.12874 & 0.06001 & 0.06013 & $\underline{(10,8)}$ & \textbf{0.10791}$^2$  & 0.20754 & 0.12048 & 0.12096 \\
$\underline{(11,8)}$ & \textbf{0.16878}$^2$  & 0.29023  & 0.19680 & 0.19818 & $\underline{(12,8)}$ & \textbf{0.23451}$^2$ & 0.37434  & 0.28470 & 0.28797\\
$\underline{(13,8)}$ & \textbf{0.30214}$^2$  & 0.45827 & 0.38245 & 0.38694 & $\underline{(14,8)}$ & \textbf{0.36974}$^2$  & 0.56612 & 0.48658 & 0.49227\\
%$\underline{(15,8)}$ & \textbf{0.43614}  & 0.67085 & 0.59491 & 0.60172(6) \\
%$(20,8)$ & \textbf{0.73504} & 1.15704 & 1.12553 & 1.15152(6) &  1.16214(6)\\
%$(25,8)$ & \textbf{0.97682} & 1.37059 & 1.47631 & 1.67430(6) & 1.68617(6) \\
$(10,9)$ & \textbf{0.02773}$^1$ & 0.07668 & 0.02874 & 0.02876 & $\underline{(11,9)}$ & \textbf{0.05796}$^2$  & 0.13098 & 0.06197 & 0.06208\\
$\underline{(12,9)}$ & \textbf{0.09730}$^2$  & 0.19157 & 0.10746 & 0.10778 & $\underline{(13,9)}$ & \textbf{0.14332}$^2$  & 0.25611 & 0.16368 & 0.16444\\
$\underline{(14,9)}$ & \textbf{0.19382}$^2$  & 0.32294 & 0.22865 & 0.23033 & $(11,10)$ & \textbf{0.01321}$^1$  & 0.04289 & 0.01342 & 0.01343\\
%$\underline{(15,9)}$ & \textbf{0.24697}  & 0.39087  & 0.30087 & 0.30372(7)\\
$(12,10)$ & \textbf{0.02978}$^1$ &  0.07806 & 0.03093  & 0.03095 & $\underline{(13,10)}$ & \textbf{0.05342}$^2$ & 0.12009 & 0.05674 & 0.05681\\
$\underline{(14,10)}$ & \textbf{0.08332}$^2$ & 0.16726  & 0.09071 & 0.09090 & $(13, 11)$ & \textbf{0.01476}$^1$ &  0.04400 & 0.01506  & 0.01506\\
%$\underline{(15,10)}$ & \textbf{0.11836} & 0.21814 & 0.13224 & 0.13263(8)\\
$\underline{(14, 11)}$ & \textbf{0.02815}$^2$ & 0.07141 & 0.02915 & 0.02916 & $(14,12)$ & \textbf{0.00712}$^1$ &  0.02361 & 0.00718 & 0.00718\\
%$\underline{(15, 11)}$ & \textbf{0.04636} & 0.10400 & 0.04881 & 0.04885(9)\\
%$\underline{(15, 12)}$ & \textbf{0.01435} & 0.04043  & 0.01463 & 0.01462(10)\\
$(15,13)$ & \textbf{0.00335}$^1$ &  0.01218 & 0.00336 & 0.00336\\
\hline
\vspace{-2mm}
\end{tabularx}
\label{tab:bkbounds}
\raggedright
\footnotesize{$^1$Bounds obtained with the procedure of Section \ref{ClusterBound}, strictly improving also the generalization of \cite{costaDalai} to the $(b,k)$-case.\\ $^2$Bounds where the procedure of Section \ref{ClusterBound} reduces to the same solution obtained by generalization of \cite{costaDalai}.}
\end{table}

\begin{table}[ht!]
\footnotesize
\def\arraystretch{1.17}
\caption{Upper bounds on $R_{(b,k)}$. All numbers are rounded upwards.}
\centering
\begin{tabularx}{\linewidth}{c@{\extracolsep{\fill}}c@{\extracolsep{\fill}}c@{\extracolsep{\fill}}c@{\extracolsep{\fill}}c@{\extracolsep{\fill}}c}
$(b,k)$ & \cite{DalaiVenkatJaikumar}* & \cite{costaDalai}* & \cite{Arikan} & \cite{venkat} & \cite{Korner2} \\
\hline
$(5,4)$ &\textbf{0.57303} & 0.66126 & 0.61142 & 0.74834  & 0.73697(0)\\
$(6,4)$ & \textbf{0.77709} & 0.87963  & 0.83904 & 1.09604 & 1.00000(0)\\
$(7,4)$ & \textbf{0.94372} &  1.03711 & 1.02931 & 1.40593  &  1.22239(0) \\
$(100, 6)$ & \textbf{2.81342} & --- & 3.61848(2) & 4.87959(2)  &  4.32193(0) \\
$(100, 7)$ & \textbf{2.67473} & --- & 3.41158(2) & 4.47696(2)  & 4.05889(0) \\
\hline
\vspace{-2mm}
\end{tabularx}
\label{tab:dalaivenkatjaikumar}
\raggedright
\footnotesize{Missing values indicate impossibility to compute the bound due to high computational complexity.\\$^*$The generalized bound for the $(b,k)$ case.}
\end{table}

\begin{table}[ht!]
\footnotesize
\def\arraystretch{1.17}
\caption{Upper bounds on $R_{(b,k)}$. All numbers are rounded upwards.}
\centering
\begin{tabularx}{\linewidth}{c@{\extracolsep{\fill}}c@{\extracolsep{\fill}}c@{\extracolsep{\fill}}c@{\extracolsep{\fill}}c@{\extracolsep{\fill}}c@{\extracolsep{\fill}}c}
$(b,k)$ & \cite{venkat} & \cite{costaDalai}* & \cite{Arikan} & \cite{Korner2} \\
\hline
$(9,9)$ &\textbf{8.4288} $\cdot \mathbf{10^{-3}}$ & 0.00946 & 0.03182  & $8.4300 \cdot 10^{-3}$\\
$(10,10)$ & \textbf{3.6287} $\cdot \mathbf{10^{-3}}$ &  0.00419 & 0.01642 &  $3.6288 \cdot 10^{-3}$\\
$(11, 11)$ & \textbf{1.53895} $\cdot \mathbf{10^{-3}}$ &  0.00181 & 0.00803  &  $1.53897 \cdot 10^{-3}$ \\
$(12, 11)$ & \textbf{6.13036} $\cdot \mathbf{10^{-3}}$ &  0.00664 & 0.02266  &  $6.13075 \cdot 10^{-3}$\\
$(12, 12)$ & \textbf{6.44678} $\cdot \mathbf{10^{-4}}$ & 0.00077 & 0.00377  &  $6.44679  \cdot 10^{-4}$ \\
$(13, 12)$ & \textbf{2.75350} $\cdot \mathbf{10^{-3}}$ &  0.00305 & 0.01143  &  $2.75355  \cdot 10^{-3}$ \\
$(13, 13)$ & \textbf{2.672760} $\cdot \mathbf{10^{-4}}$ &  0.00033 & 0.00172 &  $2.672761  \cdot 10^{-4}$\\
$(14, 13)$ & \textbf{1.218595} $\cdot \mathbf{10^{-3}}$ &  0.00138 & 0.00556 &  $1.218599 \cdot 10^{-3}$\\
\hline
\vspace{-2mm}
\end{tabularx}
\label{tab:venkatriazanov}
\raggedright
\footnotesize{$^*$The generalized bound for the $(b,k)$ case.}
\end{table}

The paper is structured as follows. In Section \ref{background} we give some background proving Lemmas \ref{dalaivenkat_lemma} and \ref{Rbk_Mj_lemma}. In Section \ref{ClusterBound} we present the first part of our computational method to bound $\mathbf{M}_j$ by partitioning the domain of possible $p$ and $q$ distributions and then working on the subdomains. The second part is presented in Section \ref{CaseReduction}, where we derive optimality conditions on $p$ and $q$ over such subdomains, which allow us to reduce the problem to a manageable one that can be solved exactly. Finally, in Section \ref{sec:qualk6} we show that at least some of the bounds that we obtain are not tight, although a quantitative improvement is not explicitly derived.

%The implementation of an exhaustive study over a finite set of non-reducible cases plays an important role in deriving the results of this paper, and we believe that further studies on this problem are likely to reveal that the use of new algorithmic techniques could have a relevant role also for future improvements.
%We defer to the Appendices the details of the implementation of the exhaustive study over this non-reducible cases.

\section{Background}
\label{background}
The best upper bounds on $R_{(b,k)}$ available in the literature can all be seen as different applications of a central idea, which is the study of $(b,k)$-hashing by comparison with a combination of binary partitions. This mainline of approach to the problem comes from the original work of Fredman and Koml\'os \cite{FredmanKomlos}. A clear and productive formulation of the idea was given by Radhakrishnan in terms of Hansel's lemma \cite{Jkumar}, which remained the main tool used in all recent results \cite{DalaiVenkatJaikumar2}, \cite{venkat} and \cite{costaDalai}. 

A \textit{hypergraph} $\mathcal{H}$ is a family $E$ of subsets of a finite set $V$  where the subsets in $E$ are called edges and the elements of $V$ are called vertices. If all the edges have size $d$ then we say that $\mathcal{H}$ is a $d$-uniform hypergraph.  We state the Hansel's Lemma here for the reader's convenience.

\begin{lemma}[Hansel for Hypergraphs \cite{hansel}, \cite{nilli}] \label{hansel}
Let $K_r^d$ be the complete $d$-uniform hypergraph on $r$ vertices and let $G_1,\ldots,G_m$ be $c$-partite $d$-uniform hypergraphs on those vertices such that $\cup_{i}G_i=K_r^d$. Let $\tau(G_i)$ be the number of non-isolated vertices in $G_i$. Then
\begin{equation}
\log \frac{c}{d-1} \sum_{i=1}^m \tau(G_i) \geq \log \frac{r}{d-1} \,.
\end{equation}
\end{lemma}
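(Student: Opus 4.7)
The plan is to adapt the entropy-based proof of Hansel's lemma for graphs. For each $G_i$ I fix a witnessing $c$-partition $V = V_i^1 \sqcup \cdots \sqcup V_i^c$ and attach to every vertex $v$ a label $\sigma_i(v)\in\{1,\ldots,c,\ast\}$ equal to the index of the part containing $v$ when $v$ is non-isolated in $G_i$, and $\ast$ otherwise. The concatenation $\sigma(v)=(\sigma_1(v),\ldots,\sigma_m(v))$ is a codeword of length $m$, and the covering hypothesis $\bigcup_i G_i = K_r^d$ combined with the $c$-partite structures yields the key $d$-distinguishing property: for every $d$-tuple of distinct vertices $v_1,\ldots,v_d$, some coordinate $i$ assigns them $d$ pairwise distinct symbols in $\{1,\ldots,c\}$.

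Now let $V$ be a uniformly random vertex of $V$. A fibre-counting argument first gives $H(\sigma(V))\ge \log(r/(d-1))$: every fibre of $\sigma$ has cardinality at most $d-1$ (otherwise $d$ vertices with identical encoding would violate the distinguishing property), so $H(V\mid\sigma(V))\le \log(d-1)$ and the lower bound follows by the chain rule. For the matching upper bound one aims to prove $H(\sigma(V))\le \bigl(\log(c/(d-1))\bigr)\sum_i \tau(G_i)/r$; after clearing denominators this rearranges into the claimed inequality (with the natural $r$ factor on the right-hand side that Hansel's inequality for graphs exhibits when specialized to $d=2$, $c=2$).

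The main technical obstacle is precisely this upper bound: naive subadditivity $H(\sigma(V))\le\sum_i H(\sigma_i(V))$ yields at best a $\log c$ contribution per non-$\ast$ coordinate rather than the desired $\log(c/(d-1))$. To recover the sharper constant one must leverage the full $d$-distinguishing property, not merely the injectivity of $\sigma$. I would do this by an inductive/conditional argument on $d$: condition on an auxiliary collection of $d-2$ independent uniformly random vertices whose $\sigma_i$-labels cut the permissible non-$\ast$ alphabet for $V$ at coordinate $i$ from $c$ down to $c-(d-2)$, and average so that the effective per-coordinate contribution collapses to $\log(c/(d-1))$. The residual bipartition-style condition on each $G_i$ reduces to the classical $d=2$ graph version of Hansel's lemma, provable by the standard two-colouring entropy argument, which serves as the base of the induction. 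Combining the entropic lower and upper bounds gives the stated inequality.
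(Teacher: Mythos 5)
Your lower bound is sound: the $d$-distinguishing property forces every fibre of $\sigma$ to have at most $d-1$ elements, hence $H(\sigma(V))=H(V)-H(V\mid\sigma(V))\ge\log r-\log(d-1)$. The argument collapses, however, exactly at the step you flag as the main obstacle: the upper bound $H(\sigma(V))\le\log\frac{c}{d-1}\cdot\frac{1}{r}\sum_i\tau(G_i)$ is not merely hard to establish by the conditioning trick you sketch --- it is false for $d>2$. Take $r=c=d=3$, $m=1$ and $G_1=K_3^3$ with its trivial $3$-partition into singletons. Then $\sigma(V)=\sigma_1(V)$ is uniform on three symbols, so $H(\sigma(V))=\log 3$, whereas $\log\frac{c}{d-1}\cdot\frac{1}{r}\,\tau(G_1)=\log\frac{3}{2}$; the lemma itself holds here with equality, so the configuration is legitimate. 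The point is that $\sigma(V)$ carries strictly more information than the covering hypothesis pays for: pinning $V$ down to a fibre of size $d-1$ is stronger than what is needed, and no conditioning on $d-2$ auxiliary vertices can repair an inequality whose two sides are already in the wrong order. (Your proposed per-coordinate alphabet reduction would in any case give $\log\bigl(c-(d-2)\bigr)$, not $\log\frac{c}{d-1}$.) So the entropy sandwich cannot close, and the full joint entropy of the encoding is the wrong intermediate quantity.

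For comparison, the paper does not prove this lemma itself but relies on the elementary probabilistic proof of \cite{nilli}: for each $i$ select uniformly at random $d-1$ of the $c$ parts of $G_i$, and call $v$ a survivor of round $i$ if it is isolated in $G_i$ or lies in a selected part. By the covering hypothesis and pigeonhole at most $d-1$ vertices survive all rounds, a non-isolated vertex survives round $i$ with probability $\binom{c-1}{d-2}/\binom{c}{d-1}=\frac{d-1}{c}$, and taking expectations gives $\sum_v\left(\frac{d-1}{c}\right)^{m_v}\le d-1$ with $m_v$ the number of rounds in which $v$ is non-isolated; Jensen's inequality then yields $\frac{1}{r}\sum_i\tau(G_i)\,\log\frac{c}{d-1}\ge\log\frac{r}{d-1}$, which is the (normalized) form of the statement actually used later in the paper, where $\tau$ is treated as a fraction of vertices. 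If you want an entropy proof, you must bound a quantity weaker than $H(\sigma(V))$.
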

Using this main ingredient, we provide here a proof of Lemma \ref{Rbk_Mj_lemma}, which extends the bound used in \cite{costaDalai}  to general $b$ and $k$. We refer the reader to \cite{costaDalai} for a more detailed discussion on connections with other previous bounds in the literature. 

\begin{proof}[Proof of Lemma \ref{Rbk_Mj_lemma}]

Given a $(b,k)$-hash code $C$ of rate $R$, fix any $j$ elements $x_1,x_2,\ldots, x_{j}$ in $C$, with $j$ in the range $2, \ldots, k-2$. For any coordinate $i$ let $G_i^{x_1, \ldots, x_j}$ be the $(b-j)$-partite $(k-j)$-uniform hypergraph with vertex set $C\setminus\{x_1,x_2,\ldots, x_j\}$ and edge set
\begin{equation}\label{eq:FKgraph}
E = \big\{ \left\{y_1, \ldots, y_{k-j}\right\} : x_{1,i}, \ldots, x_{j,i},y_{1,i}, \ldots,y_{k-j, i} \mbox{ are all distinct} \big\}\,.
\end{equation}
Since $C$ is a $(b,k)$-hash code, then $\bigcup_i G_i^{x_1, \ldots, x_{j}}$ is the complete $(k-j)$-uniform hypergraph on $C\setminus\{x_1,x_2,\ldots, x_j\}$ and so
\begin{equation}
\log \frac{b-j}{k-j-1} \sum_{i=1}^n \tau(G_i^{x_1, \ldots, x_{j}})\geq \log\frac{|C|-j}{k-j-1}\,.
\label{eq:hansel_hash}
\end{equation}
Inequality (\ref{eq:hansel_hash}) holds for any choice of $x_1,x_2,\ldots, x_j$, so the main goal is proving that the left hand side is not too large for all possible choices of $x_1,x_2,\ldots, x_j$. The choice can be deterministic or we can take the expectation over any random selection.

First note that if the $x_{1,i},x_{2,i},\ldots, x_{j,i}$ are not all distinct (let us say that they ``collide'') then the hypergraph defined by \eqref{eq:FKgraph} is empty, that is the corresponding $\tau$ in the left hand side of \eqref{eq:hansel_hash} is zero. Otherwise, $\tau(G_i^{x_1, \ldots, x_j})$ depends on the frequency of different symbols in the $i$-th coordinate of the code.
%So, using codewords $x_1,x_2,\ldots, x_j$ which collide in many coordinates helps in upper bounding $|C|$. On the other hand, in a coordinate $i$ where the codewords do \emph{not} collide, $\tau(G_i^{x_1, \ldots, x_j})$ depends on what a fraction of the code uses the remaining $b-j$ symbols in the alphabet. This can be made small ``on average'' if $x_1, \ldots, x_j$ are picked randomly. More precisely, 
Let  $f_{i}$ be their distribution, meaning that $f_{i,a}$ is the fraction of elements of $C$ whose $i$-th coordinate is $a$. Then, we have 
\begin{multline}
\tau(G_i^{x_1, \ldots, x_j}) =
\begin{cases}
0 \hspace{4.5cm} x_1,\ldots,x_{j} \mbox{ collide in coordinate }i\\
\left(\frac{|C|}{|C|-j}\right)\left(1-\sum_{h=1}^{j}f_{i,x_{h,i}}\right) \hspace{0.5cm}\mbox{otherwise}
\end{cases}.
\end{multline}

%So, one can make the left hand side in \eqref{eq:hansel_hash} small by using $x_{1}, \ldots,x_j$ which collide in many coordinates and at the same time have in the remaining coordinates symbols $x_{h i}$ for which the $f_{i,x_{h i}}$ are not too small. This can be obtained ``on average'' if $x_{1}, \ldots,x_{j}$ are picked in some random way over the code, since this will force values with large $f_{i,x_{h i}}$ to a appear frequently as the $i$-th coordinate in some of the $x_{1}, \ldots,x_{j}$.
%There are different ways to turn this into a precise agrument to bound the right hand side of \eqref{eq:hansel_hash}. 

We partition the code $C$ into subcodes $C_\omega$, $\omega\in\Omega$ in such a way that each subcode has a size which grows unbounded with $n$ and uses in any of its first $\ell$ coordinates only $j-1$ symbols, where $\ell$ denotes the length of the prefix. It can be shown, by an easy extension of the method used for the case $b=k$ and $j=k-2$ in  \cite{costaDalai}, that if the original code has rate $R$, then for any $\epsilon>0$ one can do this with a choice of $\ell=n(R-\epsilon)/\log\left(\frac{b}{j-1}\right)$ for $n$ large enough. Given such a partition of our code, if we select codewords $x_{1}, \ldots,x_{j}$ within the same subcode $C_\omega$, they will collide in the first $\ell$ coordinates and the corresponding contribution to the left-hand side of \eqref{eq:hansel_hash} will be zero. The next step is to add randomization. Pick randomly one of the subcodes $C_\omega$ and randomly select the codewords $x_{1}, \ldots,x_{j}$ within $C_\omega$. Then an upper bound on $|C|$ is obtained by taking an expectation on the left-hand side of \eqref{eq:hansel_hash} 
\begin{align}
\log \frac{|C|-j}{k-j-1} &\leq \log \frac{b-j}{k-j-1}  \mathbb{E}_{\omega}\left(\mathbb{E}\left[\sum_{i\in [\ell+1,n]}\tau(G_i^{x_1,x_2,\dots,x_{j}})|\omega\right]\right)  \nonumber \\
&= \log \frac{b-j}{k-j-1}  \sum_{i\in [\ell+1,n]}\mathbb{E}_{\omega}(\mathbb{E}[\tau(G_i^{x_1,x_2,\dots,x_{j}})|\omega])\label{eq:sumellton}.
\end{align}
Here, each subcode $C_\omega$ is taken with probability $\lambda_{\omega}=|C_\omega|/|C|$, and $x_{1}, \ldots,x_{j}$ are taken uniformly at random (without repetitions) from $C_\omega$.

Let now $f_{i|\omega}$ be the distribution of the $i$-th coordinate of the subcode $C_\omega$ (with components, say, $f_{i,a|\omega}$) . Then, for $i>\ell$, we can write
\begin{align}
\mathbb{E} [& \tau(G_i^{x_1, \ldots, x_{j}})|\omega] =\left(1+o(1)\right) \sum_{\stackrel{\text{distinct }}{ a_1,\ldots,a_{j}}} f_{i,a_1|\omega}f_{i,a_2|\omega}\cdots f_{i,a_{j}|\omega}(1-f_{i,a_1}-\ldots-f_{i,a_{j}} )\label{eq:exptaugf}
\end{align}
where the $o(1)$ is meant as $n\to\infty$ and is due, under the assumption that $C_\omega$ grows unbounded with $n$, to sampling without replacement within $C_\omega$.
Now, since $\lambda_{\omega}=|C_\omega|/|C|$, $f_i$ is actually the expectation of $f_{i|\omega}$ over $\omega$, that is, using a different dummy variable $\mu$ to index the subcodes for convenience,
$$
f_i=\sum_{\mu}\lambda_{\mu}f_{i|\mu}\,.
$$
Using this in \eqref{eq:exptaugf}, one notices that when taking a further expectation over $\omega$ it is possible to operate a symmetrization in $\omega$ and $\mu$. 
%If we denote with $\Psi_j$ the polynomial function defined for two probability distribution $p=(p_1,p_2,\dots,p_b)$ and $q=(q_1,q_2,\dots,q_b)$ as
%\begin{align}
%\Psi_j(p,q) = &\frac{1}{(b-j-1)!} \nonumber \\  \sum_{\sigma\in S_b} %&p_{\sigma(1)}p_{\sigma(2)}\dots p_{\sigma(j)}q_{\sigma(j+1)} + %q_{\sigma(1)}q_{\sigma(2)}\dots q_{\sigma(j)}p_{\sigma(j+1)},
%\label{eq:defPsi}
%\end{align}
The expectation of \eqref{eq:exptaugf} over $\omega$ can then be written as
\begin{align}
\mathbb{E}_{\omega}[\tau (G_i^{x_1,x_2,\dots,x_{j}})] =\left(1+o(1)\right)\frac{1}{2}\sum_{\omega,\mu\in \Omega}\lambda_{\omega}\lambda_{\mu}\Psi_j(f_{i|\omega},f_{i|\mu})\,,\label{simmetrizzata2}
\end{align}
so that
\begin{equation}
\mathbb{E}_{\omega}[\tau (G_i^{x_1,x_2,\dots,x_{j}})]\leq (1+o(1))\frac{1}{2}\mathbf{M}_j\,.
\end{equation}
This leads to
\begin{equation}
\log{|C|} \leq (1+o(1)) \frac{1}{2} (n-\ell) \mathbf{M}_j \log \frac{b-j}{k-j-1} \,,
\end{equation}
from which, using the value of $\ell$ described above, one deduces
\begin{align*}
R\leq (1+o(1))\frac{1}{2}\left[1-\frac{R}{\log\left(\frac{b}{j-1}\right)}\right]\mathbf{M}_j \log \frac{b-j}{k-j-1}.
\end{align*}
Explicitating in $R$ we conclude the proof of the Lemma.
\end{proof}

The first part of the above derivation follows the same method used in \cite{DalaiVenkatJaikumar}. In particular, the proof of Lemma \ref{dalaivenkat_lemma} can be obtained using $j=2$ and looking at \eqref{eq:exptaugf} as a quadratic form in $f_{i|\omega}$ with kernel of elements $(1-f_{i,a_1}-f_{i,a_2})$. The procedure used in \cite{DalaiVenkatJaikumar} can then be applied also for $b \geq k$ with some simple variations. 

\begin{proof}[Proof of Lemma \ref{dalaivenkat_lemma}]
Set $j=2$ in \eqref{eq:exptaugf}. Proceeding as in \cite{DalaiVenkatJaikumar}, it can be shown that the right hand side, as a quadratic form in $f_{i|\omega}$, is a concave function on the simplex of probability distributions if all the values $f_{i,a}$ are not larger than $1/2$. Assume first that this holds for all $i\in[\ell+1,n]$. The expectation over $\omega$ is then bounded by the value obtained by replacing both $f_{i|\omega}$ and $f_i$ with a uniform distribution, which is easily evaluated to be $(b^2-3b+2)/b^2$. When used in \eqref{eq:sumellton} this gives the bound of Lemma \ref{dalaivenkat_lemma}. It remains to show that we can assume without loss of generality that $f_{i,a}\leq 1/2$ for all $i$ and $a$. Again the procedure is a generalization of what was done in \cite{DalaiVenkatJaikumar}. Suppose that there exists a coordinate $i \in \{1, 2, \ldots, n\}$ for which (rename the symbols if needed) $f_{i,1}\geq f_{i,2}\geq\ldots\geq f_{i,b}$ with $f_{i,1} > 1/2$. Note that we must then have $f_{i,1}+f_{i,2}+\ldots+f_{i,k-1}\geq (b+k-3)/(2b-2)$.
 We can build another $(b,k)$-hash  code $C'$ by removing all the codewords in $C$ for which the symbol in the $i$-th coordinate is in $\{k,k+1,\ldots,b\}$ and by deleting this coordinate in the remaining codewords. Clearly $C'$ has length $n-1$ and cardinality $|C'|\geq |C|\cdot(b+k-3)/(2b-2)$. This process can be iterated, say $t$ times, in order to get a code $\tilde{C}$ of length $n-t$ in which $f_{i,a}\leq   1/2$ for all $i \in \{1, 2, \ldots, n - t\}$ and for all $a \in \{1,2, \ldots, b\}$ and such that 
\begin{equation}
|\tilde{C}|\geq |C|\left(\frac{b+k-3}{2b-2}\right)^t\,.
\end{equation}
Let $B(b,k)$ be the right hand side of \eqref{eq:dalaivenkatbound}.
We can apply the previous part of the proof to $\tilde{C}$ and bound the rate $R$ of $C$ as 
\begin{align*}
\frac{1}{n}\log|C|
& \leq \frac{1}{n}\log|\tilde{C}|+\frac{t}{n}\log\left(\frac{2b-2}{b+k-3}\right) \\
& \leq \frac{n-t}{n}B(b,k)+\frac{t}{n}\log\left(\frac{2b-2}{b+k-3}\right) +o(1)\\
& \leq B(b,k)-\frac{t}{n}\left[B(b,k)-\log\left(\frac{2b-2}{b+k-3}\right)\right]+o(1)\,.
\end{align*}
The proof of the Lemma is concluded if we prove that $B(b,k) > \log\frac{2b-2}{b+k-3}$ for $b \geq k \geq 4$. 
We verify this inequality considering the following three different ranges of $b$ and $k$:
\begin{enumerate}
\item Suppose that $12 \leq k \leq b\leq (k-3)^2$.  Then
\begin{align*}
	B(b,k)  
	&\overset{(i)}{>} \frac{2}{3} \cdot \frac{(b^2-3b+2) \log b \log\left(\frac{b-2}{k-3} \right)}{ b^2 \log(b) } > \frac{2}{3} (1-3/b) \log\left(\frac{b-2}{k-3}\right)  \\ 
    & \overset{(ii)}{\geq} \frac{1}{2} \log\left(\frac{b-2}{k-3}\right) \,,
\end{align*} 
where $(i)$ is true since $\log\left(\frac{b-2}{k-3}\right) \leq 1/2 \log b$ for $b \leq (k-3)^2$,  while $(ii)$ since $b \geq 12$.
Then, it can be verified that for $b \geq k \geq 12$ we have that
$$
	\frac{1}{2} \log\left(\frac{b-2}{k-3}\right) > \log\left(\frac{2b-2}{b+k-3}\right)\,.
$$
\item Suppose that $b \geq 8 k -22$ and $k \geq 4$.  Then

\begin{align*}
	B(b,k)
	&>
	 \frac{(b^2-3b+2) \log b \log\left(\frac{b-2}{k-3} \right)}{ 2 b^2 \log(b) } > \frac{1}{2} (1-3/b) \log\left(\frac{b-2}{k-3} \right)  \nonumber \\
	 &\overset{(i)}{>} \frac{1}{3} \log\left(\frac{b-2}{k-3} \right) \,, 
\end{align*}
where $(i)$ is true since $b > 9$.  Then, it can be easily verified that for $b \geq 8k-22$ we have that
$$
	\frac{1}{3} \log\left(\frac{b-2}{k-3} \right) \geq 1 > \log\left(\frac{2b-2}{b+k-3}\right)\,.
$$
\item All the cases  $b \geq k = 4, 5, \ldots, 11$ can be verified manually or by using a symbolic computation software.
\end{enumerate}
Finally,  we see that the ranges of $b$, as functions of $k$, in the first two cases intersect because
$$
	(k-3)^2 \geq 8k-22
$$
is verified for every $k \geq 12$. Therefore the thesis of the lemma follows.
\end{proof}

\section{Bounding the quadratic form}
\label{ClusterBound}
We now enter the problem of determining better upper bounds on the value of $\mathbf{M}_j$ defined in \eqref{eq:QuadraticForm}. 
%We simplify here the notation and consider the quadratic form
%\begin{equation}
%\sum_{p,q}\lambda_p\lambda_q \Psi_j(p;q)
%\label{eq:QuadraticForm}
%\end{equation}
%where $p$ and $q$ run over an arbitrary  finite set of points in the simplex $\mathcal{P}_b$ of $b$-dimensional probability distribution and $\lambda$ is a probability distribution over such set.
We consider partitions of $\mathcal{P}_b$, the set of probability distributions on $b$ elements, into disjoint subsets to find upper bounds on the quadratic form \eqref{eq:QuadraticForm} in terms of simpler ones. If we have a partition $\{\mathcal{P}_b^0,\mathcal{P}_b^1,\ldots,\mathcal{P}_b^r\}$ of $\mathcal{P}_b$ and we define
$$
m_{i,h}=\sup_{p\in \mathcal{P}_b^i , q\in\mathcal{P}_b^h} \Psi_j(p,q)\,,\qquad \eta_i=\sum_{p\in\mathcal{P}_b^i}\lambda_p\,,
$$
then clearly 
\begin{align}
\sum_{p,q}\lambda_p\lambda_q \Psi_j(p,q) & \leq \sum_{i,h}\sum_{p\in\mathcal{P}_b^i}\sum_{q\in\mathcal{P}_b^h}\lambda_p \lambda_q m_{i,h} \leq \sum_{i,h}\eta_i \eta_h m_{i,h}\,.\label{eq:ReducedQuadratic}
\end{align}
This is a convenient simplification since we have now an $r$-dimensional problem which we might be able to deal with in some computationally feasible way. We will use this procedure with two different partitions in terms of how balanced or unbalanced the distributions are.
We take $b+1$ subsets with some symmetry which allows us to further reduce the complexity.

\textbf{Partition based on maximum value.} We first consider a partition of $\mathcal{P}_b$ in terms of the largest probability value which appears in a distribution. We use a parameter $\epsilon \leq 1/(j+1)$; all quantities will depend on $\epsilon$ but we do not write this to avoid cluttering the notation. We define $b$ sets of unbalanced distributions
$$
\widecheck{\mathcal{P}}_b^{i} = \left\{p\in \mathcal{P}_b:p_i>1-\epsilon\right\}\,
$$
for every $1\leq i \leq b$, and correspondingly a set of balanced distributions
$$
 \widecheck{\mathcal{P}}_b^{0} = \left\{p\in \mathcal{P}_b:p_i\leq 1-\epsilon\ \forall i\right\}\,.
$$
Note that these are all disjoint sets since $\epsilon<1/2$ when $j \geq 2$. Following the scheme mentioned above, we can consider the values $m_{i,h}$ and $\eta_i$ for this specific partition. However, due to symmetry, the values $m_{i,h}$ can be reduced to only four cases, depending on whether $p$ and $q$ are both balanced, one balanced and one unbalanced, or both unbalanced, either on the same coordinate or on different coordinates.\\
Assuming $1\leq i,h\leq b$ with $i\neq h$, the following quantities are then well defined and independent of the specific values chosen for $i$ and $h$
\begin{equation}
\begin{aligned}
\widecheck{M}_1 & =\sup_{p,q \in \widecheck{\mathcal{P}}_b^{0}} \Psi_j(p;q) &\qquad &
\widecheck{M}_2 & =\sup_{p \in \widecheck{\mathcal{P}}_b^{0},q\in \widecheck{\mathcal{P}}_b^{i}} \Psi_j(p;q)\\
\widecheck{M}_3 & =\sup_{p,q \in \widecheck{\mathcal{P}}_b^{i}} \Psi_j(p;q) &\qquad &
\widecheck{M}_4 & =\sup_{p \in \widecheck{\mathcal{P}}_b^{i},q\in \widecheck{\mathcal{P}}_b^{h}} \Psi_j(p;q)
\end{aligned}
\label{eq:MaxMis}
\end{equation}
These values can then be used in \eqref{eq:ReducedQuadratic} in place of the values $m_{i,h}$.

\textbf{Partition based on the minimum value.} We also consider a partition of $\mathcal{P}_b$ using constraints from below. Again we use a parameter $\epsilon$ which will be then tuned. We assume here $\epsilon<1/b$. Consider now the following disjoint sets of unbalanced distributions
$$
\widehat{\mathcal{P}}_b^{i} = \left\{p\in \mathcal{P}_b:p_i<\epsilon\,,p_h\geq p_i\ \forall h\,,p_h>p_i\ \forall h<i\right\}\,
$$
for $1 \leq i \leq b$, that is, distributions in $\widehat{\mathcal{P}}_b^{i}$ have a minimum component in the $i$-th coordinate, which is smaller than $\epsilon$, and strictly smaller than any of the preceding components (unless of course $i=1$). Correspondingly, define a set of balanced distributions as
$$
 \widehat{\mathcal{P}}_b^{0} = \left\{p\in \mathcal{P}_b:p_i\geq \epsilon\ \forall i\right\}\,.
$$
The symmetry argument mentioned before also applies in this case and we can continue in analogy replacing the $m_{i,h}$ of \eqref{eq:ReducedQuadratic} with the following quantities
\begin{equation}
\begin{aligned}
\widehat{M}_1 & =\sup_{p,q \in \widehat{\mathcal{P}}_b^{0}} \Psi_j(p;q) &\qquad &
\widehat{M}_2 & =\sup_{p \in \widehat{\mathcal{P}}_b^{0},q\in \widehat{\mathcal{P}}_b^{i}} \Psi_j(p;q)\\
\widehat{M}_3 & =\sup_{p,q \in \widehat{\mathcal{P}}_b^{i}} \Psi_j(p;q) & \qquad &
\widehat{M}_4 & =\sup_{p \in \widehat{\mathcal{P}}_b^{i},q\in \widehat{\mathcal{P}}_b^{h}} \Psi_j(p;q)
\end{aligned}
\label{eq:MinMis}
\end{equation}
where again $1\leq i,h\leq b$ with $i\neq h$. 

\textbf{Quadratic form.} Applying the above scheme with the symmetric partitions we just defined, we can now rewrite the upper bound of equation \eqref{eq:ReducedQuadratic} in the form
\begin{align}
\sum_{p,q}\lambda_p\lambda_q \Psi_j(p;q) & \leq \eta_0^2M_1+2\eta_0\sum_{i=1}^b \eta_i M_2+\sum_{i=1}^b \eta_i^2M_3+ 2\sum_{i<h} \eta_i\eta_h M_4\,,
\label{eq:QuadracBoundedbyMs}
\end{align}
where either the $\widehat{M}_i$'s or the $\widecheck{M}_i$'s can be used in place of the $M_i$'s.

Call $M$ the maximum value achieved by the right hand side of \eqref{eq:QuadracBoundedbyMs} over all possible probability distributions $\eta=(\eta_0,\eta_1,\ldots,\eta_b)$. We show that under assumptions that are verified in our setting, the value of $M$ can be determined explicitly.
%We use now a a Lemma which shows that, subject to a condition the $M_i's$ which we will show to be satisfied in our setting, the computation of $M$ once the $M_i$'s are given is simple.
%	The following Lemma proofs that 

\begin{lemma}
\label{lemmaMax}
Let $M_1, M_2, M_3$ and $M_4$ be positive numbers such that $M_4 > M_3$ and, for a probability distribution $\eta=(\eta_0,\eta_1,\ldots,\eta_b)$, define the function
$$
f(\eta)=\eta_0^2M_1+2\eta_0\sum_{i=1}^b \eta_i M_2+\sum_{i=1}^b \eta_i^2M_3+ 2\sum_{i<h} \eta_i\eta_h M_4\,.
$$
Then
\begin{equation}
M=\max_{\eta}f(\eta)
\label{eq:defM}
\end{equation}
is attained at $\eta_1=\eta_2=\ldots =\eta_b$ and
$$\eta_0 = \begin{cases}
    \frac{M_2- \frac{1}{b} M_3- \frac{b-1}{b} M_4}{2M_2-M_1- \frac{1}{b} M_3- \frac{b-1}{b} M_4},& \text{if } M_2 > M_1{,}M_3{,}M_4\\
    0 \text{ or } 1,              & \text{otherwise} \\
    \end{cases}.
$$
\end{lemma}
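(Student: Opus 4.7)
The plan is to reduce the $(b{+}1)$-dimensional constrained maximization to a single-variable problem by first fixing $\eta_0 \in [0,1]$ and optimizing over $(\eta_1, \ldots, \eta_b)$ subject to $\eta_i \geq 0$ and $\sum_{i=1}^b \eta_i = 1 - \eta_0$. Using the identity $2\sum_{i<h} \eta_i \eta_h = (1-\eta_0)^2 - \sum_{i=1}^b \eta_i^2$, I would rewrite the last two sums of $f(\eta)$ as
\[
M_4(1-\eta_0)^2 - (M_4 - M_3)\sum_{i=1}^b \eta_i^2,
\]
and the assumption $M_4 > M_3$ then turns the inner maximization into a minimization of $\sum \eta_i^2$ under a fixed affine constraint. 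By strict convexity of $t \mapsto t^2$ this minimum is attained uniquely at the uniform vector $\eta_1 = \cdots = \eta_b = (1-\eta_0)/b$, which establishes the symmetry claim in the lemma.

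Substituting produces a single-variable quadratic
\[
f^*(\eta_0) = M_1 \eta_0^2 + 2 M_2 \eta_0 (1-\eta_0) + C(1-\eta_0)^2, \qquad C := \frac{M_3}{b} + \frac{(b-1)M_4}{b},
\]
whose derivative vanishes uniquely at $\eta_0^\star = (M_2 - C)/(2M_2 - M_1 - C)$ and whose second derivative equals $2(M_1 - 2M_2 + C)$. If $M_2 > M_1, M_3, M_4$, then $C$, being a convex combination of $M_3$ and $M_4$, satisfies $C < M_2$; hence $M_1 - 2M_2 + C < 0$ (strict concavity), both the numerator $M_2 - C$ and the denominator $2M_2 - M_1 - C = (M_2-M_1)+(M_2-C)$ are positive, and $M_1 < M_2$ yields $\eta_0^\star < 1$. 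Thus $\eta_0^\star \in (0,1)$ is the unique global maximizer, delivering the interior formula stated in the lemma.

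In the complementary regime I would simply compare the two endpoint values $f^*(0) = C$ and $f^*(1) = M_1$, corresponding to $\eta_0 = 0$ (uniform mass over the nonzero coordinates) and $\eta_0 = 1$ (all mass at coordinate $0$), and pick the larger. The only step that truly requires care is the initial reduction to equal weights: it is precisely the strict inequality $M_4 > M_3$ that makes the uniform configuration the unique minimizer of $\sum \eta_i^2$ on a constant-sum simplex, and without this hypothesis the collapse to a one-dimensional optimization would break down. The remaining case analysis is routine univariate calculus on a compact interval.
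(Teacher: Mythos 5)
Your argument is essentially identical to the paper's: the same identity $2\sum_{i<h}\eta_i\eta_h=(1-\eta_0)^2-\sum_i\eta_i^2$ reduces the inner optimization to minimizing $\sum_i\eta_i^2$ at fixed $\eta_0$ (where $M_4>M_3$ forces the uniform choice), and the remainder is the same univariate quadratic in $\eta_0$, which you actually analyze in slightly more detail than the paper does. The only caveat, inherited from the lemma's statement itself rather than introduced by you, is that the ``otherwise'' endpoint claim is not fully airtight (one can have $M_2>M_1$ and $M_2>C$ with $M_2\le M_4$, giving an interior maximizer), but this does not affect the cases where the lemma is applied.
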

\proof
Since $\sum_{i=1}^b \eta_i=(1-\eta_0)$, $f$ can be written as
$$\eta_0^2M_1+2(1-\eta_0)\eta_0 M_2+\sum_{i=1}^b \eta_i^2M_3+ 2\sum_{i<h} \eta_i\eta_h M_4.$$
Note that
$$\sum_{i=1}^b \eta_i^2M_3+2\sum_{i<h} \eta_i\eta_h M_4=\sum_{i=1}^b \eta_i^2(M_3-M_4)+(1-\eta_0)^2 M_4.$$
Since $M_3 < M_4$ and $\sum_{i=1}^b\eta_i=1-\eta_0$, this sum is maximized when $\eta_1=\eta_2=\ldots=\eta_b=(1-\eta_0)/b$. Therefore we have to maximize the quantity
$$
\eta_0^2M_1+2(1-\eta_0)\eta_0 M_2+\frac{1}{b}(1-\eta_0)^2(M_3-M_4)+ (1-\eta_0)^2 M_4\,,
$$
which is just a quadratic in $\eta_0$ that achieves its maximum in $[0,1]$ at the point described in the statement of the Lemma.
\endproof

%Call $M$ the maximum value achieved by the right hand side of \eqref{eq:QuadracBoundedbyMs} over all possible probability distributions $\eta=\eta_0,\eta_1,\ldots,\eta_b$ (which will of course depend on whether we use the $\widehat{M}_i$'s or $\widecheck{M}_i$'s values in place of the $M_i$'s).
%Then we can replace $\Psi_j_{\max}$ in \eqref{eq:RboundedbyPsi} with $M$ to derive the bound
%\begin{equation}
%R_{(b,k)} \leq \left(\frac{2}{M \log \frac{b-j}{k-j-1}}+\frac{1}{\log%\left(\frac{b}{j-1}\right)}\right)^{-1}\,.
%\label{eq:RboundedbyPsiMax}
%\end{equation}
We will describe in the next Section our procedure to determine, or upper bound the values $\widehat{M}_i$, $\widecheck{M}_i$. Using these bounds in equation \eqref{eq:QuadracBoundedbyMs} we thus obtain an upper bound on $\mathbf{M}_j$ defined in \eqref{eq:QuadraticForm}. Applying Lemma \ref{Rbk_Mj_lemma} we obtain our main result.

\begin{teo}
The bounds of Table \ref{tab:bkbounds} hold.
\label{th:bounds}
\end{teo}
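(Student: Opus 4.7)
The plan is to run, for each pair $(b,k)$ listed in Table \ref{tab:bkbounds}, the machinery already assembled in Sections \ref{background} and \ref{ClusterBound} together with the computational reduction that will appear in Section \ref{CaseReduction}. The chain of reductions is the following: Lemma \ref{Rbk_Mj_lemma} turns an upper bound on $\mathbf{M}_j$ into one on $R_{(b,k)}$; inequality \eqref{eq:QuadracBoundedbyMs} combined with Lemma \ref{lemmaMax} turns an upper bound on the four scalars $M_1,M_2,M_3,M_4$ into one on $\mathbf{M}_j$; and Section \ref{CaseReduction}, via optimality conditions on the extremal $(p,q)$, will turn each of the infinite-dimensional problems defining the $\widehat{M}_i$ or $\widecheck{M}_i$ into a finite computational task. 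The theorem is then obtained by numerically minimizing the resulting bound over the remaining free parameters.

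Concretely I would fix $(b,k)$, an index $j\in\{2,\dots,k-2\}$, the choice of partition (maximum-based or minimum-based), and a threshold $\epsilon$ satisfying $\epsilon\leq 1/(j+1)$ or $\epsilon<1/b$ respectively. Then, invoking Section \ref{CaseReduction}, I would evaluate (or rigorously upper bound) the four quantities in \eqref{eq:MaxMis} or \eqref{eq:MinMis}, verify the dominance hypothesis $M_4>M_3$ required by Lemma \ref{lemmaMax}, and read off the closed-form maximum $M$ of $f(\eta)$ stated in that lemma. This gives an explicit upper bound on $\mathbf{M}_j$ via \eqref{eq:QuadracBoundedbyMs}, and hence on $R_{(b,k)}$ via Lemma \ref{Rbk_Mj_lemma}. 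Finally, I would numerically minimize the resulting expression over $j$, over the partition type, and over $\epsilon$; the value so obtained is the entry reported in Table \ref{tab:bkbounds}. The two footnote classes $^1$ and $^2$ correspond respectively to the cases in which the interior optimum of Lemma \ref{lemmaMax} is active, and those in which the optimum lies at an endpoint $\eta_0\in\{0,1\}$ so that the resulting bound coincides with the generalization of \cite{costaDalai} to arbitrary $(b,k)$.

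The main obstacle is evaluating the $\widehat{M}_i$ and $\widecheck{M}_i$, since a priori each is a supremum of the polynomial $\Psi_j(p;q)$ of degree $j+1$ over a subset of the product of two simplices in $\mathbb{R}^b$, an infinite-dimensional optimization with no obvious finite description. The role of Section \ref{CaseReduction} is precisely to derive enough structural information on the extremal $(p,q)$ (support patterns, symmetries under coordinate permutations fixing the selected minimum or maximum index, and Lagrange/KKT conditions) to cut the dimension down to something solvable exactly, or at least tightly upper-boundable, for each $(b,k)$ in the table. A secondary technical point is that the condition $M_2>M_1,M_3,M_4$ required for the interior case of Lemma \ref{lemmaMax} may fail; when this happens one must evaluate $f$ at the boundary values $\eta_0\in\{0,1\}$ and retain the larger one, which accounts for the qualitative distinction between the $^1$ and $^2$ entries of the table.
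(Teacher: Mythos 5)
Your proposal follows essentially the same route as the paper: Lemma \ref{Rbk_Mj_lemma} reduces the problem to bounding $\mathbf{M}_j$, the partitions of Section \ref{ClusterBound} together with \eqref{eq:QuadracBoundedbyMs} and Lemma \ref{lemmaMax} reduce that to the four constants $\widecheck{M}_i$ or $\widehat{M}_i$, and the optimality conditions of Section \ref{CaseReduction} (Propositions \ref{reductions}--\ref{prop:M4l}) make those finitely computable, after which $j$, $\epsilon$ and the partition type are tuned per case exactly as you describe. The only inaccuracy is your reading of the footnotes: the $^1$/$^2$ split is not about whether $\eta_0$ is interior (e.g.\ for $(6,6)$ one gets $\eta_0=1$ yet the bound still strictly improves on \cite{costaDalai}), but about whether the global maximum of $\Psi_{k-2}$ is attained at uniform distributions, in which case the partitioning cannot help and the bound collapses to the \cite{costaDalai} value.
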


\begin{rem}\label{rem:ratesunderlined}
The bounds on $R_{(7, 7)}$, $R_{(8, 8)}$, $R_{(9,8)}$, $R_{(10, 9)}$, $R_{(11,10)}$, $R_{(12, 10)}$, $R_{(13, 11)}$,  $R_{(14,12)}$ and  $R_{(15, 13)}$ are obtained using the partition based on the maximum value $\{\widecheck{\mathcal{P}}_b^{i}\}_{i=0,\ldots,b}$.
The bounds on $R_{(5,5)}$, $R_{(6, 5)}$ and $R_{(6,6)}$ are obtained using the partition based on the minimum value $\{\widehat{\mathcal{P}}_b^{i}\}_{i=0,\ldots,b}$.

All other cases, those underlined in Table \ref{tab:bkbounds}, are obtained computing, as done in \cite{costaDalai}, the global maximum of $\Psi_{k-2}$, which is attained for uniform distributions. Therefore, the partitioning process in these particular cases cannot make any improvements.
\end{rem}

Based on the result in \cite{DalaiVenkatJaikumar2}, or its generalization given in equation \eqref{eq:dalaivenkatbound} and on Theorem \ref{th:bounds} for $(b,k)=(6,6)$, we are led to formulate the following conjecture.	
\begin{con}\label{conj1}
For $b \geq k>3$, 
$$
	R_{(b,k)} \leq \min_{2 \leq j \leq k-2} \left(\frac{1}{\log\frac{b}{j-1}} + \frac{b^{j+1}}{b^{\underline{j+1}} \log \frac{b-j}{k-j-1}} \right)^{-1}\,.
$$
\end{con}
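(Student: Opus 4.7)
The plan is to reduce Conjecture~\ref{conj1} to the upper bound
\begin{equation*}
\mathbf{M}_j \leq \frac{2\,b^{\underline{j+1}}}{b^{j+1}}, \qquad 2\leq j\leq k-2,
\end{equation*}
which, by Lemma~\ref{Rbk_Mj_lemma} and a direct substitution into \eqref{Rbk_Mj_bound}, implies the $j$-th term inside the minimum in the conjecture. A quick computation gives $\Psi_j(u;u) = 2\,b^{\underline{j+1}}/b^{j+1}$ for the uniform vector $u = (1/b,\ldots,1/b)$, so the inequality is equivalent to the statement that the supremum defining $\mathbf{M}_j$ in \eqref{eq:QuadraticForm} is attained by the Dirac mass at $u$. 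For $j=2$ this is precisely Lemma~\ref{dalaivenkat_lemma}, so the task reduces to extending that argument to $j \geq 3$.

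I would revisit \eqref{eq:exptaugf}, viewing $\mathbb{E}[\tau(G_i^{x_1,\ldots,x_j})\mid\omega]$ as a polynomial of degree $j$ in $f_{i|\omega}$ whose coefficients $\bigl(1 - \sum_{h=1}^{j} f_{i,a_h}\bigr)$ depend on the averaged frequency $f_i = \mathbb{E}_\omega[f_{i|\omega}]$. If this polynomial is concave in $f_{i|\omega}$ on the relevant portion of the simplex, then Jensen's inequality pushes the outer expectation over $\omega$ through and reduces the problem to maximizing a symmetric polynomial in $f_i$ alone; a Schur-concavity or symmetrization argument should then identify $f_i = u$ as the maximizer and yield the value $2\,b^{\underline{j+1}}/b^{j+1}$. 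As in the proof of Lemma~\ref{dalaivenkat_lemma}, concavity will not hold on the whole simplex, so a preliminary chopping reduction is required: whenever some $f_{i,a}$ exceeds a threshold $c$, keep only the codewords whose $i$-th symbol avoids the heavy letter and delete the coordinate, reducing at a controlled rate cost to the regime $f_{i,a}\leq c$ for every $i,a$.

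The hardest step, I expect, is aligning these two ingredients when $j \geq 3$. A Hessian computation shows that the polynomial above has zero second partial derivatives along each coordinate axis, so concavity on the tangent plane $\{v:\sum_a v_a = 0\}$ of the simplex reduces to an estimate of the shape $\sum_a v_a^2 (C - D f_{i,a}) \leq 0$ whose constants depend on both $j$ and the marginals. For $j=2$ this produces the clean threshold $c = 1/2$; for higher $j$ the admissible region appears to shrink (plausibly of order $c = 1/(j-1)$), while simultaneously the chopping reduction becomes more expensive. Establishing that the conjectured bound still absorbs the reduction loss for every admissible $(b,k,j)$, or alternatively devising a concavity argument that bypasses the need for a pointwise threshold, is the step I expect to require a genuinely new idea. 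A cautionary observation is that the off-diagonal values of $\Psi_j$ can exceed the on-diagonal maximum at $u$ — for instance with $b=4$, $j=2$, $p=(1/2,1/2,0,0)$ and $q=(0,0,1/2,1/2)$ one has $\Psi_2(p;q)=1 > 3/4 = \Psi_2(u;u)$ — so the argument must truly exploit the quadratic-form averaging in \eqref{eq:QuadraticForm} rather than rely on a pointwise bound on $\Psi_j$ alone.
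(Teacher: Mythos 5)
The statement you are asked about is not a theorem of the paper but Conjecture~\ref{conj1}: the authors explicitly present it as an open conjecture, motivated by the fact that the $j=2$ term coincides with the proven Lemma~\ref{dalaivenkat_lemma} and that their computation for $(b,k)=(6,6)$ happens to hit the conjectured value $M=5/27=2\,b^{\underline{k-1}}/b^{k-1}$. There is therefore no proof in the paper to compare against, and your proposal, as you yourself concede, does not supply one. Your reduction is correct and is indeed the natural one: since $\Psi_j(u;u)=2\,b^{\underline{j+1}}/b^{j+1}$ for uniform $u$, the $j$-th term of the conjecture follows from Lemma~\ref{Rbk_Mj_lemma} once one shows $\mathbf{M}_j\leq 2\,b^{\underline{j+1}}/b^{j+1}$, i.e.\ that the supremum in \eqref{eq:QuadraticForm} is attained by the Dirac mass at $u$. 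Your cautionary example ($\Psi_2(p;q)=1>3/4$ for complementary half-supported $p,q$ when $b=4$) is also correct and correctly identifies why a pointwise bound on $\Psi_j$ cannot work.

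The gap is the entire content of the conjecture. The inequality $\mathbf{M}_j\leq 2\,b^{\underline{j+1}}/b^{j+1}$ for $j\geq 3$ is precisely what nobody knows how to prove, and the paper's own rigorous results indicate how far current techniques fall short: for $(b,k)=(5,5)$, $j=3$, the conjectured value is $2\cdot 5^{\underline{4}}/5^4=0.384$, while the best upper bound on the quadratic form obtained in Theorem~\ref{theorem:Msvalues} is $M\approx 0.38737$; similarly for $(6,5)$. So even after the elaborate partition-and-reduction machinery of Sections~\ref{ClusterBound} and~\ref{CaseReduction}, the bound on $\mathbf{M}_j$ strictly exceeds the conjectured value except in isolated cases. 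Your proposed route — concavity of the degree-$j$ polynomial \eqref{eq:exptaugf} plus a chopping reduction — faces the specific obstruction you anticipate: for $j\geq 3$ the expression is no longer a quadratic form in $f_{i|\omega}$ with a fixed kernel, the concavity region shrinks, and it is not known that the rate loss of the chopping step is absorbed by the target bound for all admissible $(b,k,j)$; moreover, a Jensen argument alone would only control the diagonal behaviour, whereas the supremum in \eqref{eq:QuadraticForm} involves all cross terms $\Psi_j(f_{i|\omega},f_{i|\mu})$, which, as your own example shows, can individually exceed the uniform value. In short, the proposal is a reasonable research plan whose decisive step remains open; it should not be presented as a proof of the conjecture.
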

Note that the conjectured expression can be seen as a modification of the K\"orner-Marton bound in \eqref{eq:kornermarton} which takes into account the effects of prefix-based partitions.

\section{Computation of \texorpdfstring{$M$}{Lg} in \eqref{eq:defM}}
\label{CaseReduction}

In light of Lemma \ref{lemmaMax}, the main problem for the computation of $M$ is determining the $\widecheck{M}_i$'s and $\widehat{M}_i$'s defined in equations \eqref{eq:MaxMis} and \eqref{eq:MinMis}. This requires determining the maximum values taken by $\Psi_j(p;q)$ for $p$ and $q$ constrained to specific subsets $\widecheck{\mathcal{P}}_b^{i}$ or $\widehat{\mathcal{P}}_b^{i} $. Following a procedure similar to that of \cite{costaDalai}, here we prove that, under certain conditions, the distributions $p$ and $q$ achieving those maxima have many equal components. This, together with other simplifications that will be presented later, allows us to greatly reduce the complexity in the search for the maxima (see Remarks \ref{rem:4vars} and  \ref{rem:3vars} below). For this purpose we first present three Lemmas, which generalize Lemmas 3, 4 and 5 of \cite{costaDalai}.
\begin{lemma}[Extension of Lemma 3 in \cite{costaDalai}]
\label{lemma1}
Let $\ell$ be an integer in $[2,b]$ and, for $i\in [1,\ell]$, consider the nonempty intervals $I_i=[a_i,b_i]$ and $J_i=[c_i,d_i]$.
Set $D_p=I_1\times I_2\times \cdots \times I_{\ell}\times \overline{p_{\ell+1}}\times\cdots \times \overline{p_{b}}$ and $D_q=J_1\times J_2\times \cdots \times J_{\ell}\times \overline{q_{\ell+1}}\times\cdots \times \overline{q_{b}}$. Consider the set $D$ of pairs of probability vectors $(p,q)$ such that $p$ belongs to $D_p$ and $q$ belongs to $D_q$.
Then if $(\overline{p}; \overline{q})$ is a maximum point for $\Psi_j$ in $D$ then either $\overline{p_i}=\overline{p_h}$ and $\overline{q_i}=\overline{q_h}$ for any $i,h\in [1,\ell]$ or there is a maximum for $\Psi_j$ on the boundary of $D$ (as projected on the first $\ell$ coordinates).
\end{lemma}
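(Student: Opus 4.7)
The plan is a symmetry-plus-perturbation argument in the spirit of Lemma 3 of \cite{costaDalai}. Suppose $(\bar p,\bar q)$ is a maximum of $\Psi_j$ on $D$ and that the first alternative of the conclusion fails, i.e., there exist $\alpha\ne\beta$ in $[1,\ell]$ with $(\bar p_\alpha,\bar q_\alpha)\ne(\bar p_\beta,\bar q_\beta)$. If $(\bar p,\bar q)$ is already on the projected boundary of $D$ the second alternative holds, so I may assume that $(\bar p,\bar q)$ lies in the relative interior. I would then freeze every coordinate of $p$ and $q$ except $p_\alpha,p_\beta,q_\alpha,q_\beta$ and restrict $\Psi_j$ to the two-dimensional family of perturbations that keep $p_\alpha+p_\beta$ and $q_\alpha+q_\beta$ fixed; this family preserves the probability constraints and is naturally parametrized by $x=p_\alpha-p_\beta$ and $y=q_\alpha-q_\beta$.

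The key structural observation is that in every monomial of $\Psi_j$ the indices attached to the $p$- and $q$-factors are pairwise distinct, so among the four variables $p_\alpha,p_\beta,q_\alpha,q_\beta$ only the products $p_\alpha p_\beta$, $q_\alpha q_\beta$, $p_\alpha q_\beta$ and $p_\beta q_\alpha$ can appear in a single term. Hence $\Psi_j$, viewed as a polynomial in these four variables, has total degree at most $2$, and invariance under the transposition $\alpha\leftrightarrow\beta$ forces its coefficients to be symmetric in $\alpha,\beta$. After substituting the parametrization, the restriction $\phi(x,y):=\Psi_j|_{\text{slice}}$ satisfies $\phi(x,y)=\phi(-x,-y)$ and therefore takes the form
\[
\phi(x,y)=\phi(0,0)+Ax^2+Cxy+By^2
\]
for scalars $A,B,C$ depending only on the frozen variables.

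The remainder is standard quadratic analysis at the nonzero vector $(\bar x,\bar y):=(\bar p_\alpha-\bar p_\beta,\bar q_\alpha-\bar q_\beta)$. The interior first-order conditions $2A\bar x+C\bar y=0$ and $C\bar x+2B\bar y=0$ say that $(\bar x,\bar y)$ is a nonzero null vector of the Hessian, while the second-order condition requires the Hessian to be negative semi-definite; together these force $A\le0$, $B\le0$ and $4AB=C^2$. Consequently the Hessian has rank at most $1$, and $\phi$ is constant along the entire line through $(\bar x,\bar y)$ in its null direction. Moving along this line in $(p,q)$ space redistributes mass within the pairs $(p_\alpha,p_\beta)$ and $(q_\alpha,q_\beta)$ without changing $\Psi_j$; since $(\bar x,\bar y)$ is interior to the bounded rectangle $I_\alpha\times I_\beta\times J_\alpha\times J_\beta$, the line must eventually meet one of its faces, producing a maximizer of $\Psi_j$ on the boundary of $D$ as projected onto the first $\ell$ coordinates.

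I expect the main technical point to be the monomial bookkeeping for the degree-$2$ structure and the vanishing of the linear part of $\phi$: one needs to use explicitly the distinctness of indices in \eqref{eq:defPsi} to rule out combinations like $p_\alpha q_\alpha$ and $p_\alpha p_\beta q_\alpha$, and to track how the $\alpha\leftrightarrow\beta$ symmetry propagates to make the coefficients of $\phi$ symmetric. Once this algebraic setup is in place, the rest reduces to the elementary analysis of a $2\times 2$ symmetric matrix.
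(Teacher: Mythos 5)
Your proof is correct and rests on the same perturbation-and-symmetry idea as the paper, which works directly with the one-parameter line $P(t)$ through the midpoint and $(\bar p,\bar q)$; your two-variable quadratic $\phi(x,y)$ restricted to its null direction $(\bar x,\bar y)$ is precisely that line, along which $\Psi_j$ is then shown to be constant. One small remark: the second-order (negative semi-definiteness) conditions you invoke are superfluous, since once the first-order conditions place $(\bar x,\bar y)$ in the kernel of the Hessian, constancy of $\phi$ along $t\mapsto(t\bar x,t\bar y)$ is immediate from $\phi(t\bar x,t\bar y)=\phi(0,0)+t^{2}\bigl(A\bar x^{2}+C\bar x\bar y+B\bar y^{2}\bigr)=\phi(0,0)$.
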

Note that, in particular, in the latter case, we have a maximum point $(\overline{p}; \overline{q})$ for $\Psi_j$ with at least one index $i\in [1,\ell]$ such that either $\overline{p_i}\in \{a_i,b_i\}$ or $\overline{q_i}\in \{c_i,d_i\}$.
\proof
Let us assume that $\overline{P}=(\overline{p};\overline{q})$ is a maximum point for $\Psi_j$ in $D$ and $\overline{p_1},\overline{p_2},\dots,\overline{p_{\ell}}$ or $\overline{q_1},\overline{q_2},\dots,\overline{q_{\ell}}$ are not all equal. By symmetry, assume without loss of generality that $\overline{p_1}\not=\overline{p_2}$.
Now, if $\overline{P}$ is a maximum for $\Psi_j$ not on the boundary $D$, then it is a maximum also under the stronger constraints $p_1+p_2=c_1$, $q_1+q_2=c_2$ where $c_1=\overline{p_1}+\overline{p_1}$, $c_2=\overline{q_1}+\overline{q_2}$, and $p_i=\overline{p_i},q_i=\overline{q_i}$ for $i\in\{3,4,\dots, \ell\}$.
Then, let us consider the line $L$ of points $P(t)$ such that
$$P(t)=P(0)+t\left(\frac{\overline{p_1}-\overline{p_2}}{2},\frac{-\overline{p_1}+\overline{p_2}}{2},0,\dots,0;\frac{\overline{q_1}-\overline{q_2}}{2},\frac{-\overline{q_1}+\overline{q_2}}{2},0,\dots,0\right)\,,$$
where $P(0)=(\frac{\overline{p}_1+\overline{p}_2}{2},\frac{\overline{p}_1+\overline{p}_2}{2},\overline{p}_3,\dots,\overline{p}_b;$ $\frac{\overline{q}_1+\overline{q}_2}{2},\frac{\overline{q}_1+\overline{q}_2}{2},\overline{q}_3,\dots,\overline{q}_b)$, so that $P(1)=\bar{P}$.

It is easy to see that $\Psi_j(P(t))$ is of degree $2$ and, if $\overline{P}$ is not on the boundary of $D$, then, $t=1$ must be a stationary point for $\Psi_j(P(t))$. Moreover $\Psi_j(P(t))$ is an even function because:
\begin{align*}
\Psi_j(P(-t)) & =P(0)-t\left(\frac{\overline{p_1}-\overline{p_2}}{2},\frac{-\overline{p_1}+\overline{p_2}}{2},0,\dots,0;\frac{\overline{q_1}-\overline{q_2}}{2},\frac{-\overline{q_1}+\overline{q_2}}{2},0,\dots,0\right)\\
& = 
P(0)+t\left(\frac{\overline{p_2}-\overline{p_1}}{2},\frac{-\overline{p_2}+\overline{p_1}}{2},0,\dots,0;\frac{\overline{q_2}-\overline{q_1}}{2},\frac{-\overline{q_2}+\overline{q_1}}{2},0,\dots,0\right)\\
& = \Psi_j(P(t)).
\end{align*}
This means that $\Psi_j(P(t))=\alpha t^2+\beta$ for some $\alpha$ and $\beta$ in $\mathbb{R}$. Therefore $t=0$ would be another stationary point for $\Psi_j(P(t))$ but this is possible only if $\alpha=0$ that is $\Psi_j(P(t))$ is a constant.

The thesis follows because, in this case, the maximum is also attained on the boundary of $D$.
\endproof

With essentially the same proof we obtain

\begin{lemma}[Extension of Lemma 4 in \cite{costaDalai}]
\label{lemma2}
Let $\ell$ be an integer in $[2,b]$ and, for $i\in [1,\ell]$, consider the nonempty intervals $I_i=[a_i,b_i]$.
Set $D_p=I_1\times I_2\times \cdots \times I_{\ell}\times \overline{p_{\ell+1}}\times\cdots \times \overline{p_{b}}$ and $D_q=\overline{q_{1}}\times \overline{q_{2}}\times \cdots \times \overline{q_{\ell}}\times \overline{q_{\ell+1}}\times\cdots \times \overline{q_{b}}$ where $\overline{q_i}=\overline{q_h}$ for any $i,h \in [1,\ell]$.
Consider the set $D$ of pairs of probability vectors $(p,q)$ such that $p$ belongs to $D_p$ and $q$ belongs to $D_q$.
Then if $(\overline{p}; \overline{q})$ is a maximum point for $\Psi_j$ in $D$ then either $\overline{p_i}=\overline{p_h}$ for any $i\in [1,\ell]$ or there is a maximum for $\Psi_j$ on the boundary of $D$.
\end{lemma}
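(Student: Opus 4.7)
The plan is to replicate the argument used for Lemma \ref{lemma1}, with the only substantive adaptation being that the perturbation acts on $p$ alone (since $q$ is fixed), while the required $t\mapsto -t$ symmetry is supplied by the hypothesis $\overline{q_i}=\overline{q_h}$ for all $i,h\in[1,\ell]$. Concretely, suppose $(\overline{p};\overline{q})$ is a maximum of $\Psi_j$ on $D$ not lying on the boundary as projected on the first $\ell$ coordinates, and assume, toward contradiction, that not all of $\overline{p_1},\ldots,\overline{p_\ell}$ coincide; relabeling indices in $[1,\ell]$, we may take $\overline{p_1}\neq\overline{p_2}$. The key observation is that because $\overline{q_1}=\overline{q_2}$, transposing coordinates $1$ and $2$ of the $p$-argument alone leaves $\Psi_j(\,\cdot\,;\overline{q})$ invariant, by the symmetry of $\Psi_j$ under joint permutations of its two arguments.

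Next I would introduce the affine line
$$P(t)=P(0)+t\left(\tfrac{\overline{p_1}-\overline{p_2}}{2},\tfrac{-\overline{p_1}+\overline{p_2}}{2},0,\ldots,0;\,0,\ldots,0\right),$$
with $P(0)=\bigl(\tfrac{\overline{p_1}+\overline{p_2}}{2},\tfrac{\overline{p_1}+\overline{p_2}}{2},\overline{p_3},\ldots,\overline{p_b};\,\overline{q}\bigr)$, so that $P(1)=(\overline{p};\overline{q})$; note that $P(t)$ stays in $D$ for $t$ in a neighborhood of $1$ since the maximum is interior. Two facts must then be verified. First, $\Psi_j(P(t))$ is a polynomial in $t$ of degree at most $2$: every monomial of $\Psi_j$ is indexed by distinct coordinates $\sigma(1),\ldots,\sigma(j+1)$, so any single term contains at most one factor of $p_1$ and at most one of $p_2$, while $q$ is constant along the line. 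Second, $\Psi_j(P(t))$ is an even function of $t$: substituting $-t$ for $t$ exchanges the first and second entries of the $p$-component, which, combined with the invariance of $q$ under the same transposition (by the hypothesis $\overline{q_1}=\overline{q_2}$) and the permutation symmetry of $\Psi_j$, leaves $\Psi_j(P(t))$ unchanged.

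Combining the two facts gives $\Psi_j(P(t))=\alpha t^2+\beta$ for some $\alpha,\beta\in\mathbb{R}$. Since $P(1)$ is an interior maximum of $\Psi_j$ restricted to the line, $t=1$ must be a stationary point; but the derivative $2\alpha t$ vanishes at $t=1$ only if $\alpha=0$ (the only other stationary point of a nondegenerate quadratic being $t=0$). Hence $\alpha=0$, so $\Psi_j$ is constant along $P(t)$, and extending $t$ until $P(t)$ meets the boundary of $D$ (which happens since $p_1\in I_1=[a_1,b_1]$ and $p_2\in I_2=[a_2,b_2]$ are bounded) yields a maximum point on the boundary of $D$, as claimed. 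The only step that warrants careful checking is the degree-$2$ bound, but it follows directly from the multilinearity of $\Psi_j$ over distinct coordinates; no further obstacle arises, which is why the author remarks that the proof is essentially identical to that of Lemma \ref{lemma1}.
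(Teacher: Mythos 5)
Your proposal is correct and follows exactly the route the paper intends: the paper states that Lemma \ref{lemma2} is obtained "with essentially the same proof" as Lemma \ref{lemma1}, and your write-up supplies precisely the needed adaptation — perturbing only the $p$-coordinates along the symmetrizing line and deriving evenness of $\Psi_j(P(t))$ from the hypothesis $\overline{q_1}=\overline{q_2}$ together with the permutation symmetry of $\Psi_j$, after which the degree-two, even, interior-stationary-at-$t=1$ argument forces constancy along the line and hence a maximum on the boundary. No gaps.
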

Note that, in particular, in the latter case, we have a maximum point $(\overline{p}; \overline{q})$ for $\Psi_j$ with at least one index $i\in [1,\ell]$ such that $\overline{p_i}\in \{a_i,b_i\}$.

Now we present a Lemma that allows us to assume that the coordinates of $p$ and $q$ are properly rearranged depending on their values.
\begin{lemma}[Extension of Lemma 5 in \cite{costaDalai}]
If $p_1 \leq p_2$, and $q_1 \leq q_2$, then
\label{lemma3}
\begin{equation}
\label{lemma3ineq}
\Psi_j(p_1, p_2, p_3, \ldots, p_b; q_1, q_2, q_3,\ldots, q_b) \leq \Psi_j(p_1, p_2, p_3, \ldots, p_b; q_2, q_1, q_3, \ldots, q_b).
\end{equation}
\end{lemma}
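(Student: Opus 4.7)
The plan is to compute the difference $\Psi_j(p;q')-\Psi_j(p;q)$ explicitly, where $q'=(q_2,q_1,q_3,\dots,q_b)$, and to show that it factors as a product of three manifestly nonnegative quantities. First I would observe that, because the summand in \eqref{eq:defPsi} is symmetric in its first $j$ arguments and in its last argument separately, the permutation sum collapses to
\[
\Psi_j(p;q)=j!\sum_{\substack{S\subset\{1,\dots,b\}\\|S|=j}}\;\sum_{a\notin S}\Bigl[\,\prod_{s\in S}p_s\cdot q_a\;+\;\prod_{s\in S}q_s\cdot p_a\,\Bigr].
\]
Denote the two halves by $A(p;q)$ and $A(q;p)$, so that $\Psi_j(p;q)=A(p;q)+A(q;p)$; it suffices to analyse each separately.

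For $A(p;q')-A(p;q)$, only the terms with $a\in\{1,2\}$ contribute, since $q$ and $q'$ agree elsewhere. Grouping by the value of $a$ and using the standard identity $e_j(x,y_3,\dots,y_b)=e_j(y_3,\dots,y_b)+x\,e_{j-1}(y_3,\dots,y_b)$ for the elementary symmetric polynomial $e_j$, one obtains
\[
A(p;q')-A(p;q)=j!\,(q_2-q_1)(p_2-p_1)\,e_{j-1}(p_3,\dots,p_b).
\]

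For $A(q';p)-A(q;p)$ the perturbed variables $q_1,q_2$ now sit inside the product over $S$, so I would split the $S$-sum into four cases according to whether $S$ contains $1$, $2$, both, or neither. The ``both'' and ``neither'' cases contribute nothing, and after reindexing $T=S\setminus\{1,2\}$ as a $(j-1)$-subset of $\{3,\dots,b\}$, the two remaining cases combine via the telescoping identity $\sum_{a\in\{2,3,\dots,b\}\setminus T}p_a-\sum_{a\in\{1,3,\dots,b\}\setminus T}p_a=p_2-p_1$ to give
\[
A(q';p)-A(q;p)=j!\,(q_2-q_1)(p_2-p_1)\,e_{j-1}(q_3,\dots,q_b).
\]
Adding the two contributions yields
\[
\Psi_j(p;q')-\Psi_j(p;q)=j!\,(p_2-p_1)(q_2-q_1)\bigl[e_{j-1}(p_3,\dots,p_b)+e_{j-1}(q_3,\dots,q_b)\bigr],
\]
which is nonnegative because $p_2\geq p_1$, $q_2\geq q_1$, and elementary symmetric polynomials in nonnegative variables are nonnegative.

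The only real obstacle is the bookkeeping in the second step: one must carefully track which contributions cancel between the ``$1\in S$'' and ``$2\in S$'' cases so that the residual assembles cleanly into $e_{j-1}(q_3,\dots,q_b)\cdot(p_2-p_1)$. Once that reindexing is carried out, the factorization above is immediate and the inequality \eqref{lemma3ineq} follows.
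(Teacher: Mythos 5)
Your proposal is correct and follows essentially the same route as the paper: both isolate the terms of $\Psi_j$ affected by transposing $q_1$ and $q_2$ and factor the resulting difference as $(p_2-p_1)(q_2-q_1)$ times a nonnegative symmetric sum in $p_3,\ldots,p_b$ and $q_3,\ldots,q_b$. Your rewriting via elementary symmetric polynomials and the separate treatment of the two halves $A(p;q)$ and $A(q;p)$ is only a notational variant of the paper's joint computation over permutations.
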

\begin{proof}
Using the definition of $\Psi_j$ in eq. \eqref{eq:defPsi}, inequality (\ref{lemma3ineq}) can be restated by only considering the terms in the summation which differ in the two sides, that is, those corresponding to permutations $\sigma$ such that $1 \in \{ \sigma(1), \ldots, \sigma(j) \}$, $\sigma(j+1)=2$ and $2 \in \{ \sigma(1), \ldots, \sigma(j) \}$, $\sigma(j+1)=1$.
Hence inequality (\ref{lemma3ineq}) becomes
\begin{align*}
&(p_1 q_2 + p_2 q_1) \sum_{\sigma \in Sym(3,\ldots, b)} p_{\sigma(3)} \cdots p_{\sigma(j+1)} + q_{\sigma(3)} \cdots q_{\sigma(j+1)} \\
&\leq (p_1 q_1 + p_2 q_2) \sum_{\sigma \in Sym(3,\ldots, b)} p_{\sigma(3)} \cdots p_{\sigma(j+1)} + q_{\sigma(3)} \cdots q_{\sigma(j+1)}
\end{align*}
which can be restated as
\begin{align*}
&(p_2- p_1) (q_2 - q_1) \sum_{\sigma \in Sym(3,\ldots, b)} p_{\sigma(3)} \cdots p_{\sigma(j+1)} + q_{\sigma(3)} \cdots q_{\sigma(j+1)} \geq 0
\end{align*}
This is always true since $p_1 \leq p_2$ and $q_1 \leq q_2$.
\end{proof}
Using the above lemmas, we are able to isolate a relatively small set of possible configurations for the $p$ and $q$ which give the value $\widecheck{M}_1$. 
\begin{prop}\label{reductions}
$\widecheck{M}_1$ is attained in one of the following points:
\begin{itemize}
\item[1)] for $(p;q)$ of the form
$$(\overbrace{0,\dots,0}^{l_1},\underbrace{\alpha,\dots,\alpha}_{l_2},\overbrace{\beta,\dots,\beta}^{b-l_1-l_2-2},\gamma,1-\epsilon;\overbrace{\delta,\dots,\delta}^{l_1},\underbrace{0,\dots,0}_{l_2},\overbrace{\eta,\dots,\eta}^{b-l_1-l_2-2},1-\epsilon,\zeta)$$
where $\alpha,\delta>0,\ \ \beta,\eta,\gamma,\zeta\geq 0 $ and $$l_2\alpha+(b-l_1-l_2-2)\beta+\gamma+(1-\epsilon)=1=l_1\delta+(b-l_1-l_2-2)\eta+(1-\epsilon)+\zeta;$$
\item[2)] for $(p;q)$ of the form
$$(\overbrace{0,\dots,0}^{l_1},\underbrace{\alpha,\dots,\alpha}_{l_2},\overbrace{\beta,\dots,\beta}^{b-l_1-l_2-1},\gamma;\overbrace{\delta,\dots,\delta}^{l_1},\underbrace{0,\dots,0}_{l_2},\overbrace{\eta,\dots,\eta}^{b-l_1-l_2-1},1-\epsilon)$$
where $\alpha,\delta>0,\ \ \beta,\eta,\gamma\geq 0 $ and $$l_2\alpha+(b-l_1-l_2-1)\beta+\gamma=1=l_1\delta+(b-l_1-l_2-1)\eta+(1-\epsilon);$$
\item[3)] for $(p;q)$ of the form
$$(\overbrace{0,\dots,0}^{l_1},\underbrace{\alpha,\dots,\alpha}_{l_2},\overbrace{\beta,\dots,\beta}^{b-l_1-l_2};\overbrace{\delta,\dots,\delta}^{l_1},\underbrace{0,\dots,0}_{l_2},\overbrace{\eta,\dots,\eta}^{b-l_1-l_2})$$
where $\alpha,\delta>0,\ \ \beta,\eta\geq 0 $ and $$l_2\alpha+(b-l_1-l_2)\beta=1=l_1\delta+(b-l_1-l_2)\eta.$$
\end{itemize}
\end{prop}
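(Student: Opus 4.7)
The plan is to start from a maximizer $(\bar p,\bar q)\in \widecheck{\mathcal{P}}_b^0\times\widecheck{\mathcal{P}}_b^0$ of $\Psi_j$ and iteratively apply Lemmas \ref{lemma1}--\ref{lemma3} to force it into one of the listed canonical forms. I first use the invariance of $\Psi_j$ under simultaneous coordinate permutations of $p$ and $q$ to assume $\bar p$ is sorted non-decreasingly; then, by repeated applications of Lemma \ref{lemma3}, I permute the components of $\bar q$ alone so that whenever $\bar p_i<\bar p_h$ we have $\bar q_i\geq\bar q_h$, i.e., $\bar p$ and $\bar q$ are anti-correlated. I then partition the coordinates into $A=\{i:\bar p_i=0,\bar q_i>0\}$, $B=\{i:\bar q_i=0,\bar p_i>0\}$, and $C=\{i:\bar p_i,\bar q_i>0\}$, absorbing any $\bar p_i=\bar q_i=0$ into $A$ or $B$; by the anti-sorting these three blocks occupy consecutive, disjoint index ranges which I relabel to match the layout in the statement.

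Next I apply Lemma \ref{lemma2} to the block $A$, where the $\bar p_i$'s are fixed at $0$ and only the $\bar q_i$'s vary over an interval: this forces $\bar q_i=\delta$ constant on $A$ or yields a boundary event (which either sends some $\bar q_i$ to $0$, moving the index out of $A$, or to $1-\epsilon$, which I treat separately). Symmetrically, $\bar p_i=\alpha$ is constant on $B$. On the interior block $C$, Lemma \ref{lemma1} applied with $I_i=J_i=[0,1-\epsilon]$ gives either constant values $\bar p_i=\beta$ and $\bar q_i=\eta$ on $C$, or a boundary maximum. Each boundary event either sends a coordinate to $0$ (re-entering the previous step with an enlarged $A$ or $B$) or to $1-\epsilon$, which I set aside. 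Since $|C|$ strictly decreases at every nontrivial boundary step, this recursion terminates.

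Finally, because $\epsilon\leq 1/(j+1)<1/2$, at most one component of $\bar p$ can equal $1-\epsilon$ (otherwise two such components alone would sum to more than $1$), and likewise for $\bar q$. By the anti-sorting of Step~1, if both $\bar p$ and $\bar q$ attain $1-\epsilon$, they do so at different positions, which I place at the last two coordinates; the component of $\bar p$ paired with $\bar q_i=1-\epsilon$ is then some $\gamma\geq 0$, and the component of $\bar q$ paired with $\bar p_i=1-\epsilon$ is some $\zeta\geq 0$. The three cases (1)--(3) in the statement correspond respectively to both, exactly one, or neither of $\bar p,\bar q$ having a coordinate equal to $1-\epsilon$.

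I expect the main difficulty to lie in the recursive application of Lemma \ref{lemma1} in the middle step: each boundary event alters the partition $(A,B,C)$, and one must verify that after all reductions the emerging structure matches exactly the template of the proposition. The bookkeeping is simplified by two monovariants: $|C|$ strictly decreases at each nontrivial boundary step, and at most two coordinates can ever attain $1-\epsilon$, so the process halts in finitely many iterations at a point with precisely the claimed structure.
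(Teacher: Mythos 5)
Your argument is correct and follows essentially the same route as the paper's proof: Lemma \ref{lemma3} fixes the relative placement of the zeros and of the $(1-\epsilon)$-components of $p$ and $q$, and Lemmas \ref{lemma1} and \ref{lemma2} then average the remaining coordinates block by block, with the boundary alternative handled by a finite enumeration of configurations. The only slip is your termination monovariant --- a boundary event inside block $A$ or $B$ leaves $|C|$ unchanged --- but replacing $|C|$ by the total number of coordinates of $(p;q)$ not yet pinned to $0$ or $1-\epsilon$ repairs this immediately.
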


\begin{proof}
Remember that the value $\widecheck{M}_1$ is the maximum of $\Psi_j$ over pairs $(p,q)$ with $p$ and $q$ in $\widecheck{\mathcal{P}}_b^{0}$.
Moreover, due to Lemma \ref{lemma3}, we have that $p$ and $q$ do not have a value $1-\epsilon$ in the same coordinate.
Similarly, again because of Lemma \ref{lemma3}, either the zeros of $p$ and $q$ are in different positions (i.e. if $p_i=0$ then $q_i\not=0$) or for any $i$ at least one between $p_i$ and $q_i$ is zero.

According to the positions where values $1-\epsilon$ and zero can appear as coordinates of $p$ and $q$, we have that $\widecheck{M}_1$ is attained in one of the following points:
\begin{itemize}
\item[1A)] $p$ and $q$ have respectively $l_1$ and $l_2$ zeros in different positions, both have a coordinate with value $1-\epsilon$ and those are in different positions:
$$(\overbrace{0,\dots,0}^{l_1},\underbrace{\alpha_1,\dots,\alpha_{l_2}}_{l_2},\overbrace{\beta_1,\dots,\beta_{b-l_1-l_2-2}}^{b-l_1-l_2-2},\gamma,1-\epsilon;\overbrace{\delta_1,\dots,\delta_{l_1}}^{l_1},\underbrace{0,\dots,0}_{l_2},\overbrace{\eta_1,\dots,\eta_{b-l_1-l_2-2}}^{b-l_1-l_2-2},1-\epsilon,\zeta);$$
\item[1B)] $p$ and $q$ have respectively $l_1$ and $l_2$ zeros in different positions, additional $b-l_1-l_2-2$ zeros in the same positions, both have a coordinate with value $1-\epsilon$ and those are in different positions:
$$(\overbrace{0,\dots,0}^{l_1},\underbrace{\alpha_1,\dots,\alpha_{l_2}}_{l_2},\overbrace{0,\dots,0}^{b-l_1-l_2-2},0,1-\epsilon;\overbrace{\delta_1,\dots,\delta_{l_1}}^{l_1},\underbrace{0,\dots,0}_{l_2},\overbrace{0,\dots,0}^{b-l_1-l_2-2},1-\epsilon,0);$$
\item[2A)] $p$ and $q$ have respectively $l_1$ and $l_2$ zeros in different positions, $p$ has no coordinate of value $1-\epsilon$ but $q$ has:
$$(\overbrace{0,\dots,0}^{l_1},\underbrace{\alpha_1,\dots,\alpha_{l_2}}_{l_2},\overbrace{\beta_1,\dots,\beta_{b-l_1-l_2-1}}^{b-l_1-l_2-1},\gamma;\overbrace{\delta_1,\dots,\delta_{l_1}}^{l_1},\underbrace{0,\dots,0}_{l_2},\overbrace{\eta_1,\dots,\eta_{b-l_1-l_2-1}}^{b-l_1-l_2-1},1-\epsilon);$$

\item[2B)] $p$ and $q$ have respectively $l_1$ and $l_2$ zeros in different positions, additional $b-l_1-l_2-1$ zeros in the same positions, $p$ has no coordinate of value $1-\epsilon$ but $q$ has:
$$(\overbrace{0,\dots,0}^{l_1},\underbrace{\alpha_1,\dots,\alpha_{l_2}}_{l_2},\overbrace{0,\dots,0}^{b-l_1-l_2-1},0;\overbrace{\delta_1,\dots,\delta_{l_1}}^{l_1},\underbrace{0,\dots,0}_{l_2},\overbrace{0,\dots,0}^{b-l_1-l_2-1},1-\epsilon);$$

\item[3A)] $p$ and $q$ have respectively $l_1$ and $l_2$ zeros in different positions and both have no coordinates with value $1-\epsilon$:
$$(\overbrace{0,\dots,0}^{l_1},\underbrace{\alpha_1,\dots,\alpha_{l_2}}_{l_2},\overbrace{\beta_1,\dots,\beta_{b-l_1-l_2}}^{b-l_1-l_2};\overbrace{\delta_1,\dots,\delta_{l_1}}^{l_1},\underbrace{0,\dots,0}_{l_2},\overbrace{\eta_1,\dots,\eta_{b-l_1-l_2}}^{b-l_1-l_2});$$
\item[3B)] $p$ and $q$ have respectively $l_1$ and $l_2$ zeros in different positions, additional $b-l_1-l_2$ zeros in the same positions and neither has a coordiante of value $1-\epsilon$:
$$(\overbrace{0,\dots,0}^{l_1},\underbrace{\alpha_1,\dots,\alpha_{l_2}}_{l_2},\overbrace{0,\dots,0}^{b-l_1-l_2};\overbrace{\delta_1,\dots,\delta_{l_1}}^{l_1},\underbrace{0,\dots,0}_{l_2},\overbrace{0,\dots,0}^{b-l_1-l_2}).$$
\end{itemize}
Moreover, in all those cases, the allowed domains for $p$ and $q$ satisfy either the hypothesis of Lemma \ref{lemma1} or those of Lemma \ref{lemma2}. This means that we can average the $\alpha$'s (i.e. we can assume that all the $\alpha$'s are equal), the $\beta$'s, the $\delta$'s, and the $\eta$'s. The thesis follows allowing $\beta$ and $\eta$ to possibly be zero and noting that the case $1B$ becomes a subcase of $1A$, $2B$ becomes a subcase of $2A$ and $3B$ becomes a subcase of $3A$.
\end{proof}
\begin{rem}
As seen in Proposition \ref{reductions}, Lemmas \ref{lemma1}, \ref{lemma2} and \ref{lemma3} reduce the maxima candidates to a finite set of possible configurations. Still, the number of such configurations increases with $b$, and the ensuing optimization problems depend on $4$ free variables in the case $1$. The direct evaluation of the maxima of $\Psi_j$ on those configurations can in principle be performed by symbolic computation software, but the resources needed are excessive. In the following lemmas, we provide additional simplifications to obtain the exact evaluations of the maxima.
\label{rem:4vars}
\end{rem}
Due to the following lemma, whose proof can be found in the appendix, we can assume that the number of zeros that appear in $p$ (resp. in $q$) is either $b-2$ or at most $b-j$. Note that this simplification does not decrease the number of free variables but it reduces the total number of cases.
\begin{lemma}[Extension of Lemma 6 in \cite{costaDalai}]
\label{lemma4}
Suppose that $q_1 \leq q_2 \leq \ldots \leq q_{j-1}$. If all the $p_i$ are less than or equal to $1-\alpha$ where $0\leq\alpha<1$, then
\begin{align}
\Psi_j(p_1, p_2, \ldots, p_{j-1}, 0, \ldots, 0&;q_1, q_2, \ldots, q_b) \nonumber \\
& \leq \Psi_j(1-\alpha, \alpha, 0, \ldots, 0; q_1, q_2, \ldots, q_b).
\label{lemma4ineq1}
\end{align}
\end{lemma}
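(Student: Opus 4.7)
The plan is to exploit the sparsity of $p$ to reduce $\Psi_j(p;q)$ to a linear functional in $(p_1,\dots,p_{j-1})$ whose coefficients can be written as elementary symmetric polynomials in $q$; after that the inequality becomes a one-line linear programming argument. First I would observe that when $p_j=\cdots=p_b=0$, the summand $p_{\sigma(1)}\cdots p_{\sigma(j)}q_{\sigma(j+1)}$ in the definition \eqref{eq:defPsi} is identically zero, since it would require $j$ distinct indices from the $(j-1)$-element set $\{1,\dots,j-1\}$; similarly, the summand $q_{\sigma(1)}\cdots q_{\sigma(j)}p_{\sigma(j+1)}$ contributes only when $\sigma(j+1)\in\{1,\dots,j-1\}$. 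A direct permutation count (for fixed $\sigma(j+1)=h$ and fixed $j$-subset of $\{1,\dots,b\}\setminus\{h\}$ used as $\{\sigma(1),\dots,\sigma(j)\}$, there are $j!\,(b-j-1)!$ such $\sigma$, which cancels the $1/(b-j-1)!$ prefactor) then yields
\[
\Psi_j(p_1,\dots,p_{j-1},0,\dots,0;\, q_1,\dots,q_b) \;=\; j!\sum_{h=1}^{j-1} p_h\, E_h,
\]
where $E_h$ denotes the $j$-th elementary symmetric polynomial in the $b-1$ components of $q$ obtained by omitting $q_h$. The same identity applies to the right-hand side, whose $p=(1-\alpha,\alpha,0,\dots,0)$ is supported on at most $j-1$ coordinates (the case $j=2$ being essentially vacuous under the hypothesis $p_1\leq 1-\alpha<1$).

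Next I would establish the monotonicity $E_1\geq E_2\geq\cdots\geq E_{j-1}$. From the elementary identity $e_j(S\cup\{x\})=e_j(S)+x\,e_{j-1}(S)$, applied with $S$ equal to the multiset of components of $q$ other than $q_h$ and $q_{h'}$, one obtains
\[
E_h - E_{h'} \;=\; (q_{h'}-q_h)\,e_{j-1}(S),
\]
which is non-negative whenever $h<h'\leq j-1$, since $q_h\leq q_{h'}$ by hypothesis and $e_{j-1}$ is non-negative on non-negative inputs. Given this, the remaining step is immediate: because $E_h\leq E_2$ for every $h\geq 2$,
\[
\sum_{h=1}^{j-1} p_h E_h \;\leq\; p_1 E_1 + (1-p_1) E_2 \;=\; E_2 + p_1(E_1-E_2),
\]
which is a non-decreasing linear function of $p_1\in[0,1-\alpha]$ and is therefore maximized at $p_1=1-\alpha$, yielding $(1-\alpha)E_1+\alpha E_2=\Psi_j(1-\alpha,\alpha,0,\dots,0;q)/j!$, exactly the right-hand side of \eqref{lemma4ineq1}.

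I do not foresee any real obstacle. The only mildly tedious step is the combinatorial rewriting of $\Psi_j$ as $j!\sum_h p_h E_h$, which is straightforward bookkeeping around the $(b-j-1)!$ prefactor in \eqref{eq:defPsi}. Once that reformulation is in hand, the lemma reduces to the elementary monotonicity argument for the $E_h$'s followed by a trivial linear program on the simplex, and no special handling of the range of $\alpha$ is required.
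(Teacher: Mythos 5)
Your proof is correct and gives a genuinely different, more structural route than the paper's. The paper's proof is an iterated local mass-transfer argument: it shows directly from the definition of $\Psi_j$ that moving weight $\delta$ from $p_h$ to $p_i$ (for $i<h\le j-1$) cannot decrease $\Psi_j$, and then chains such moves until $p=(1-\alpha,\alpha,0,\dots,0)$. You instead observe that once $p_j=\cdots=p_b=0$ the form collapses to a \emph{linear} function of $p$, namely $\Psi_j(p;q)=j!\sum_{h=1}^{j-1}p_h E_h$ with $E_h=e_j(q\setminus q_h)$, establish $E_1\ge E_2\ge\cdots\ge E_{j-1}$ from the identity $E_h-E_{h'}=(q_{h'}-q_h)\,e_{j-1}(q\setminus\{q_h,q_{h'}\})$, and close with a trivial LP on the simplex under the cap $p_1\le 1-\alpha$. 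Both arguments hinge on the same underlying monotonicity of the coefficients of $p_h$, but you make it explicit via elementary symmetric polynomials, which is cleaner and could plausibly yield sharper or more general statements; the paper's version has the advantage of not needing the closed-form reformulation and so extends directly to the intermediate claims (inequalities (\ref{lemma4ineq2})--(\ref{lemma4ineq3})) it records along the way. One small point worth being precise about: the reformulation $j!\sum_h p_h E_h$ requires the support of $p$ to lie in $\{1,\dots,j-1\}$, which for the right-hand side $(1-\alpha,\alpha,0,\dots,0)$ means $j\ge3$ unless $\alpha=0$; you correctly flag that for $j=2$ the hypothesis forces $\alpha=0$ (since $p_1=1$), making the statement trivial, but it would be cleaner to state this rather than call it ``essentially vacuous.'' You also tacitly use $\sum_{h=1}^{j-1}p_h=1$ in the step $\sum_{h\ge2}p_h=1-p_1$; the paper uses the same normalization implicitly, so this is consistent with the intended reading of the lemma.
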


The following lemma, whose proof can be found in the appendix, takes care of the cases when there is at least one element greater or equal to $1-\epsilon$ in $p$ or $q$ vector. If this element is $p_1$, because of Lemma \ref{lemma3} we can assume $q_1$ is the minimum among the $q$-values if we are maximizing $\Psi_j$. For the evaluation of $\widecheck{M}_1$, this implies that $q_1=0$ whenever $p_1=1-\epsilon$ and vice-versa.
\begin{lemma}
\label{lemma5}
Assume that $\epsilon \leq \frac{1}{j+1}$, $p_1 \geq 1-\epsilon$ and $q_1 \leq q_2 \leq \ldots \leq q_b$. Then
\begin{equation}
\label{lemma5ineq}
\Psi_j(p_1, p_2, \ldots, p_b; q_1, q_2, \ldots, q_b) \leq \Psi_j(p_1, p_2, \ldots, p_b; 0, q_1 + q_2, q_3, \ldots, q_b).
\end{equation}
\end{lemma}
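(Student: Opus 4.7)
The plan is to expand $\Psi_j(p;q')-\Psi_j(p;q)$ directly from the definition in \eqref{eq:defPsi} and show it is non-negative by exploiting the concentration $p_1\geq 1-\epsilon$. It is convenient to split $\Psi_j=L+M$ into its $q$-linear and $q$-degree-$j$ parts, which after collapsing the $(b-j-1)!$ permutations extending each ordered $(j+1)$-tuple read $L(q)=j!\sum_r q_r\,e_j(p_{-r})$ and $M(q)=j!\sum_r p_r\,e_j(q_{-r})$, where $e_j(x_{-r})$ is the elementary symmetric polynomial of degree $j$ in the entries of $x$ different from the $r$-th (and analogously $e_j(x_{-r,-s})$ and $e_{j-1}(x_{-r,-s})$). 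Since $q$ and $q'$ differ only in positions $1$ and $2$, only $r\in\{1,2\}$ contributes to $L(q')-L(q)$, while expanding $e_j(q_{-r})$ according to the presence or absence of $q_1$ and $q_2$ in the three cases $r=1$, $r=2$ and $r\geq 3$ produces the factorization
\[
\Psi_j(p;q')-\Psi_j(p;q)=j!\,q_1\,\Bigl[(p_1-p_2)\bigl(e_{j-1}(p_{-1,-2})+e_{j-1}(q_{-1,-2})\bigr)-q_2\sum_{c\geq 3}p_c\,e_{j-2}(q_{-c,-1,-2})\Bigr],
\]
reducing the lemma to showing the bracket is non-negative.

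To control the bracket I would use the hypotheses as follows. From $p_1\geq 1-\epsilon$ and $\epsilon\leq 1/(j+1)\leq 1/3$ one obtains $p_2\leq \epsilon$, $\sum_{c\geq 3}p_c=1-p_1-p_2\leq \epsilon$, and $p_1-p_2\geq 1-2\epsilon>0$. If $q_2=0$ the sortedness of $q$ forces $q_1=0$ and the conclusion is trivial, so I assume $q_2>0$, which yields $q_c\geq q_2>0$ for every $c\geq 2$. The key estimate then comes from combining the Euler-type identity $(j-1)\,e_{j-1}(q_{-1,-2})=\sum_{c\geq 3}q_c\,e_{j-2}(q_{-c,-1,-2})$ with the bound $q_c\geq q_2$ and the counting identity $\sum_{c\geq 3} e_{j-2}(q_{-c,-1,-2})=(b-j)\,e_{j-2}(q_{-1,-2})$, yielding
\[
e_{j-1}(q_{-1,-2})\geq \frac{(b-j)\,q_2}{j-1}\,e_{j-2}(q_{-1,-2}).
\]
Using this together with the trivial majorization $\sum_{c\geq 3}p_c\,e_{j-2}(q_{-c,-1,-2})\leq \bigl(\sum_{c\geq 3}p_c\bigr)\,e_{j-2}(q_{-1,-2})$, and discarding the non-negative contribution $(p_1-p_2)\,e_{j-1}(p_{-1,-2})$, the required non-negativity reduces to the scalar inequality $(p_1-p_2)(b-j)/(j-1)\geq \sum_{c\geq 3}p_c$.

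With the numerical bounds above, the latter becomes $\epsilon\leq (b-j)/(2b-j-1)$, which I would check is implied by $\epsilon\leq 1/(j+1)$ by observing that the equivalent condition $(j+1)(b-j)\geq 2b-j-1$ rearranges to $(j-1)(b-j-1)\geq 0$, hence holds whenever $b\geq j+1$; in our setting $j\leq k-2$ and $b\geq k$, so $b\geq j+2$ and the hypothesis is amply satisfied. I expect the main obstacle to be the organized bookkeeping in the expansion step—cleanly tracking the contributions of $r=1$, $r=2$ and $r\geq 3$ to $M(q')-M(q)$ in terms of $e_{j-1}(q_{-1,-2})$ and $e_{j-2}(q_{-c,-1,-2})$—and the verification that the constant $1/(j+1)$ in the hypothesis is tight enough for the chain of majorizations to close. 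Discarding $e_{j-1}(p_{-1,-2})$ is not wasteful here because the concentration $p_1\geq 1-\epsilon$ already forces it to be small.
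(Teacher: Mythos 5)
Your proof is correct and follows essentially the same route as the paper's: both isolate the permutation terms affected by the substitution $(q_1,q_2)\mapsto(0,q_1+q_2)$, factor out $q_1$, and bound the resulting negative contribution using $p_i\le\epsilon$ for $i\ge 2$, the ordering $q_2\le q_c$, and $\epsilon\le 1/(j+1)$. The only difference is cosmetic: you organize the closing majorization via elementary-symmetric-polynomial identities, reducing to the scalar condition $\epsilon\le(b-j)/(2b-j-1)$, whereas the paper performs the equivalent term-by-term replacements directly on the sums over $Sym(3,\ldots,b)$.
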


Thanks to Lemmas \ref{lemma4} and \ref{lemma5},  we obtain the following proposition.

\begin{prop}\label{reductions2}
$\widecheck{M}_1$ is attained in one of the following points:
\begin{itemize}
\item[1)]  for $(p;q)$ of the form
$$(\overbrace{0,\dots,0}^{l_1},\underbrace{\alpha,\dots,\alpha}_{l_2},\overbrace{\beta,\dots,\beta}^{b-l_1-l_2-2},0,1-\epsilon;\overbrace{\delta,\dots,\delta}^{l_1},\underbrace{0,\dots,0}_{l_2},\overbrace{\eta,\dots,\eta}^{b-l_1-l_2-2},1-\epsilon,0)$$
where $\alpha,\beta, \delta, \eta \geq 0 $ and $$l_2\alpha+(b-l_1-l_2-2)\beta+(1-\epsilon)=1=l_1\delta+(b-l_1-l_2-2)\eta+(1-\epsilon);$$
\item[2)]  for $(p;q)$ of the form
$$(\overbrace{0,\dots,0}^{l_1},\underbrace{\alpha,\dots,\alpha}_{l_2},\overbrace{\beta,\dots,\beta}^{b-l_1-l_2-1},0;\overbrace{\delta,\dots,\delta}^{l_1},\underbrace{0,\dots,0}_{l_2},\overbrace{\eta,\dots,\eta}^{b-l_1-l_2-1},1-\epsilon)$$
where $\alpha,\beta, \delta, \eta \geq 0 $ and $$l_2\alpha+(b-l_1-l_2-1)\beta=1=l_1\delta+(b-l_1-l_2-1)\eta+(1-\epsilon);$$
\item[3)]  for $(p;q)$ of the form
$$(\overbrace{0,\dots,0}^{l_1},\underbrace{\alpha,\dots,\alpha}_{l_2},\overbrace{\beta,\dots,\beta}^{b-l_1-l_2};\overbrace{\delta,\dots,\delta}^{l_1},\underbrace{0,\dots,0}_{l_2},\overbrace{\eta,\dots,\eta}^{b-l_1-l_2})$$
where $\alpha,\beta, \delta, \eta \geq 0 $ and $$l_2\alpha+(b-l_1-l_2)\beta=1=l_1\delta+(b-l_1-l_2)\eta.$$
\end{itemize}
Moreover, we can assume that the number of zeros that appear in $p$ and in $q$ is either $b-2$ or at most $b-j$.
\end{prop}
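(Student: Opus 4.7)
The plan is to start from the three configurations established in Proposition \ref{reductions} and apply two further reductions: Lemma \ref{lemma5} forces the ``partners'' of each $1-\epsilon$ entry to vanish, while Lemma \ref{lemma4} restricts the admissible counts of zero coordinates. The final relaxation from $\alpha,\delta>0$ to $\alpha,\delta\geq 0$ is cosmetic, since a zero value merely collapses a block and cannot reach any new configuration.

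First I would handle case 1 of Proposition \ref{reductions}, where $p$ has an entry equal to $1-\epsilon$ at position $b$ (with partner $\zeta$ in $q$) and $q$ has an entry equal to $1-\epsilon$ at position $b-1$ (with partner $\gamma$ in $p$). After simultaneously permuting coordinates of $p$ and $q$ so that the $1-\epsilon$ of $p$ sits in position 1, Lemma \ref{lemma3} guarantees that at a maximum of $\Psi_j$ the smallest component of $q$ is placed opposite $p_1$; this smallest component is precisely $\zeta$. The hypotheses $p_1\geq 1-\epsilon$ and $q_1\leq q_2\leq\ldots\leq q_b$ of Lemma \ref{lemma5} are now satisfied, and the lemma replaces $q_1=\zeta$ by $0$ without decreasing $\Psi_j$. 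Applying the same reasoning after swapping the roles of $p$ and $q$ (which is legitimate by $\Psi_j(p;q)=\Psi_j(q;p)$) yields $\gamma=0$ as well. For case 2 only $q$ has a $1-\epsilon$ entry, and a single application of Lemma \ref{lemma5} with $p$ and $q$ interchanged gives $\gamma=0$. In case 3 there is no $1-\epsilon$ entry and Lemma \ref{lemma5} is inapplicable, so the configuration carries over unchanged.

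Next, to constrain the number of zeros, I would apply Lemma \ref{lemma4}. Since every entry of $p$ and $q$ is bounded above by $1-\epsilon$ throughout $\widecheck{\mathcal{P}}_b^0$, the hypothesis of the lemma with $\alpha=\epsilon$ is satisfied; thus, whenever $p$ has at most $j-1$ nonzero entries, it can be replaced by a vector of the form $(1-\alpha',\alpha',0,\ldots,0)$ with exactly $b-2$ zeros, without decreasing $\Psi_j$. Consequently, at an optimal configuration the number of zeros of $p$ is either $b-2$ (the compressed form) or at most $b-j$ (so that at least $j$ components are nonzero, a situation in which the lemma provides no improvement). The analogous statement for $q$ follows by the symmetry of $\Psi_j$ in its two arguments.

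The main technical hurdle is arranging the coordinates so that Lemma \ref{lemma5} can be invoked in its canonical form. Starting from the block structures of Proposition \ref{reductions}, one must relabel coordinates to place the $1-\epsilon$ entry of $p$ in position 1 and to sort $q$ nondecreasingly, then argue via Lemma \ref{lemma3} that this sorting places the partner $\zeta$ in position 1 of $q$; only after this alignment does Lemma \ref{lemma5} literally identify $\zeta$ as the entry to be zeroed out. Once the bookkeeping is done for case 1, the analogous manipulation for case 2 and the zero-count reduction via Lemma \ref{lemma4} are straightforward consequences.
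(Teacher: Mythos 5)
Your proposal is correct and follows essentially the same route as the paper: relax the configurations of Proposition \ref{reductions} to allow $\alpha,\delta=0$, use Lemma \ref{lemma5} (after the Lemma \ref{lemma3} alignment) to force a zero opposite each $1-\epsilon$ entry, and use Lemma \ref{lemma4} to conclude that the number of zeros is either $b-2$ or at most $b-j$. The extra bookkeeping you spell out for invoking Lemma \ref{lemma5} in its canonical coordinates is exactly the (implicit) content of the paper's argument.
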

\begin{proof}
We consider the finite list of cases provided by Proposition \ref{reductions} and we relax the domains of $p$ and $q$ allowing $\alpha$ and $\delta$ to be 0. Here, due to Lemma \ref{lemma5}, a maximum with exactly one element equal to $1-\epsilon$ in $p$ (resp. $q$) implies a zero in the same coordinate of $q$ (resp. $p$).
Finally, because of Lemma \ref{lemma4}, a maximum with $b-j+1$ or more coordinates in $p$ (resp. $q$) equal to zero is also attained in a point of the form $p = (1-\epsilon, \epsilon, 0, \ldots, 0)$ ($q = (1-\epsilon, \epsilon, 0, \ldots, 0)$).
\end{proof}

\begin{prop}\label{prop:M2u}
$\widecheck{M}_2$ is upper bounded by the global maximum of $\Psi_j$ which is attained in a point  $(p;q)$ of the following form:
$$(\overbrace{0,\dots,0}^{l_1},\underbrace{\alpha,\dots,\alpha}_{l_2},\overbrace{\beta,\dots,\beta}^{b-l_1-l_2};\overbrace{\delta,\dots,\delta}^{l_1},\underbrace{0,\dots,0}_{l_2},\overbrace{\eta,\dots,\eta}^{b-l_1-l_2})$$
where $\alpha,\beta, \delta, \eta \geq 0 $ and $$l_2\alpha+(b-l_1-l_2)\beta=1=l_1\delta+(b-l_1-l_2)\eta.$$
Moreover, we can assume that the number of zeros that appear in $p$ and in $q$ is either $b-1$ or at most $b-j$.
\begin{proof}
In order to find the global maximum of $\Psi_j$ we need no restriction on the pairs $(p,q)$, i.e., $p \in [0,1]^b$ and $q \in [0,1]^b$.
Using Lemmas \ref{lemma1}, \ref{lemma2}, \ref{lemma3} and \ref{lemma4}, we can easily derived the desired points.
\end{proof}
\end{prop}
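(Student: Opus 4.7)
The plan is to split the statement into two parts: first, reduce $\widecheck{M}_2$ to the \emph{unconstrained} maximum of $\Psi_j$ over $\mathcal{P}_b\times\mathcal{P}_b$; and second, characterize the form of that unconstrained maximum by mimicking (with simpler hypotheses) the argument used for Proposition \ref{reductions}. The first reduction is immediate: since $\widecheck{\mathcal{P}}_b^0\subseteq\mathcal{P}_b$ and $\widecheck{\mathcal{P}}_b^i\subseteq\mathcal{P}_b$, restricting the supremum to a smaller domain can only make it smaller, so
\[
\widecheck{M}_2=\sup_{p\in\widecheck{\mathcal{P}}_b^{0},\,q\in\widecheck{\mathcal{P}}_b^{i}}\Psi_j(p;q)\;\leq\;\sup_{p,q\in\mathcal{P}_b}\Psi_j(p;q).
\]
Since $\Psi_j$ is a polynomial and $\mathcal{P}_b\times\mathcal{P}_b$ is compact, the right-hand side is attained at some pair $(\bar p,\bar q)$.

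For the structural claim, I would first apply Lemma \ref{lemma3} to reorder coordinates so that $\bar p$ and $\bar q$ are coherently ordered: whenever $\bar p_i<\bar p_h$ we also have $\bar q_i\le \bar q_h$. This lets me assume, exactly as in the proof of Proposition \ref{reductions}, that the set of zero coordinates of $\bar p$ and the set of zero coordinates of $\bar q$ are disjoint (positions where both are zero can be absorbed arbitrarily into either of the two ``pure-zero'' blocks by allowing $\beta$ or $\eta$ to vanish). After this permutation the coordinates partition into three blocks: a block of $l_1$ coordinates where $\bar p_i=0$ and $\bar q_i>0$, a block of $l_2$ coordinates where $\bar q_i=0$ and $\bar p_i>0$, and a remaining block of $b-l_1-l_2$ coordinates where neither coordinate is zero.

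I would then apply the averaging Lemmas \ref{lemma1} and \ref{lemma2} on each block. On the first block the $p$-components are fixed at $0$ and the $q$-components are free in $[0,1]^{l_1}$, so Lemma \ref{lemma2} forces $\bar q_1=\cdots=\bar q_{l_1}=:\delta$. Symmetrically, on the second block Lemma \ref{lemma2} gives $\bar p_i=:\alpha$ for $i$ in that block. On the third block both $p$ and $q$ are free and Lemma \ref{lemma1} gives $\bar p_i=:\beta$ and $\bar q_i=:\eta$ throughout. This produces exactly the three-block structure displayed in the proposition, with the stoichiometric constraints coming from $\sum_i \bar p_i=\sum_i\bar q_i=1$.

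Finally, for the number-of-zeros claim, I would invoke Lemma \ref{lemma4} with $\alpha=0$ (the trivial bound $p_i\leq 1$ always holds in the unconstrained setting): if $\bar p$ has fewer than $j$ nonzero components, i.e.\ strictly more than $b-j$ zeros, then
\[
\Psi_j(\bar p;\bar q)\;\leq\;\Psi_j(1,0,\ldots,0;\bar q),
\]
whose first argument has exactly $b-1$ zeros. The same reasoning applies to $\bar q$ by the symmetry $\Psi_j(p;q)=\Psi_j(q;p)$. Hence we may always take the number of zeros in $\bar p$ and in $\bar q$ to be either $b-1$ or at most $b-j$. I expect no real obstacle here: every step is a direct application of a lemma already proved in the paper, and the only mildly delicate point is the careful block decomposition, which is handled by combining Lemma \ref{lemma3} with the averaging Lemmas \ref{lemma1} and \ref{lemma2} in the same pattern as Proposition \ref{reductions}.
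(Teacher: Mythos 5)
Your proof follows essentially the same route as the paper's (whose proof is a one-line appeal to Lemmas \ref{lemma1}--\ref{lemma4}): drop the domain constraints since $\widecheck{\mathcal{P}}_b^{0},\widecheck{\mathcal{P}}_b^{i}\subseteq\mathcal{P}_b$, rearrange via Lemma \ref{lemma3}, average within blocks via Lemmas \ref{lemma1} and \ref{lemma2}, and invoke Lemma \ref{lemma4} with $\alpha=0$ to get the ``$b-1$ or at most $b-j$ zeros'' claim. One slip worth fixing: Lemma \ref{lemma3} yields the \emph{anti}-monotone arrangement at a maximum --- one may assume $\bar p_i<\bar p_h\Rightarrow \bar q_i\ge\bar q_h$, not $\bar q_i\le\bar q_h$ as you wrote --- and it is exactly this reversal that forces the zero sets of $\bar p$ and $\bar q$ into disjoint positions (or else forces every coordinate to have at least one zero component, which is the $\beta=\eta=0$ subcase); your subsequent block decomposition is the correct consequence of the correctly oriented lemma, so the argument stands once that sentence is repaired.
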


Now, to provide a list of possible maxima also for the other $\widecheck{M}_i$ and $\widehat{M}_i$, we need also the following additional lemma.

\begin{lemma}
\label{lemma6}
Assume that $\epsilon < \frac{1}{2}$, $q_1 \geq 1-\epsilon$ and $0 < \delta \leq \epsilon$, then
\begin{equation}
\label{lemma6ineq}
\Psi_j(1-\epsilon + \delta, p_2, p_3, \ldots, p_b; q_1, q_2, \ldots, q_b) < \Psi_j(1-\epsilon, p_2+\delta, p_3, \ldots, p_b; q_1, q_2, \ldots, q_b).
\end{equation}
\end{lemma}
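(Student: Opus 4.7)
The plan is to exploit the fact that $\Psi_j$ is multilinear in each coordinate: every monomial in \eqref{eq:defPsi} contains any fixed $p_i$ or $q_i$ to degree at most one. Consequently, with $p_3,\ldots,p_b$ and $q_1,\ldots,q_b$ held fixed, the function $(p_1,p_2)\mapsto\Psi_j(p;q)$ takes the form
\[
\Psi_j(p;q) = A + B\,p_1 + C\,p_2 + D\,p_1 p_2,
\]
with $A,B,C,D$ depending only on the frozen variables. Substituting $p_1 = 1-\epsilon+\delta$ on the left-hand side of \eqref{lemma6ineq} and $p'_1 = 1-\epsilon$, $p'_2 = p_2+\delta$ on the right-hand side, a direct calculation yields
\[
\Psi_j(p';q) - \Psi_j(p;q) = \delta\bigl[(C-B) + D\,(1-\epsilon-p_2)\bigr],
\]
where I used $p_1 - p_2 - \delta = 1-\epsilon-p_2$. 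The problem reduces to verifying positivity of the bracket.

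Next I would identify $B$, $C$, and $D$ from the expansion $\Psi_j(p;q) = j!\sum_{|S|=j,\,t\notin S}(p_S q_t + q_S p_t)$ (with $p_S=\prod_{i\in S}p_i$), splitting contributions according to whether $1$ and $2$ lie in $S$ or play the role of the excluded index $t$. After the bookkeeping, this produces
\[
C - B = j!\,(q_1 - q_2)\!\!\sum_{\substack{S'\subseteq\{3,\ldots,b\}\\ |S'|=j-1}}\!\!(p_{S'} + q_{S'}),\qquad D = j!\!\!\sum_{\substack{S''\subseteq\{3,\ldots,b\}\\ |S''|=j-2}}\!\! p_{S''}\!\!\sum_{t\in\{3,\ldots,b\}\setminus S''}\!\! q_t.
\]

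Both summands in the bracket are then manifestly nonnegative. $D\ge 0$ since all $p_i,q_i\ge 0$. The hypothesis $q_1\ge 1-\epsilon$ combined with $\epsilon<1/2$ yields $q_2\le 1-q_1\le\epsilon<1-\epsilon\le q_1$, hence $q_1-q_2>0$ and $C-B\ge 0$. Finally, from $p_1+p_2\le 1$ and $p_1=1-\epsilon+\delta$ we get $p_2\le\epsilon-\delta$, so $1-\epsilon-p_2\ge 1-2\epsilon+\delta>0$.

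The main obstacle is the \emph{strict} inequality. Both $C-B$ and $D$ could in principle vanish at the same time, for instance when all $p_i$ and $q_i$ with $i\ge 3$ are zero. A direct inspection of $\Psi_j$ in these degenerate configurations however shows that $\Psi_j$ evaluates to the same value (typically zero) on both sides of \eqref{lemma6ineq}, so outside such trivial cases at least one of $C-B$ and $D$ is strictly positive and, combined with $\delta>0$, yields the strict inequality claimed in the lemma.
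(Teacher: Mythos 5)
Your proof is correct and essentially identical to the paper's: the paper likewise isolates exactly the terms of $\Psi_j$ involving $p_1$ and/or $p_2$ and reduces the difference of the two sides to $\delta\bigl[(q_1-q_2)\Sigma_1+(j-1)(1-\epsilon-p_2)\Sigma_2\bigr]$ with $\Sigma_1,\Sigma_2\ge 0$, which is your $\delta\bigl[(C-B)+D(1-\epsilon-p_2)\bigr]$ up to normalization. The strictness caveat you raise is genuine --- the paper simply asserts the strict inequality without addressing the degenerate configurations (e.g.\ $p_i=q_i=0$ for all $i\ge 3$ when $j=2$) where both sums vanish and equality holds --- but since the lemma is only used to restrict the location of maxima, the non-strict version suffices for its application.
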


Thanks to Lemma \ref{lemma6},  whose proof can be found in the appendix, we obtain the following proposition.

\begin{prop}\label{prop:M3u}
$\widecheck{M}_3$ is attained in a point $(p;q)$ of the following form:
$$(1-\epsilon, \overbrace{0,\dots,0}^{l_1},\underbrace{\alpha,\dots,\alpha}_{l_2},\overbrace{\beta,\dots,\beta}^{b-l_1-l_2-1};1-\epsilon, \overbrace{\delta,\dots,\delta}^{l_1},\underbrace{0,\dots,0}_{l_2},\overbrace{\eta,\dots,\eta}^{b-l_1-l_2-1})$$
where $\alpha,\beta, \delta, \eta \geq 0 $ and $$l_2\alpha+(b-l_1-l_2-1)\beta=\epsilon=l_1\delta+(b-l_1-l_2-1)\eta.$$
Moreover, we can assume that the number of zeros that appear in $p$ and in $q$ is either $b-2$ or at most $b-j$.
\end{prop}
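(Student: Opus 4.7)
The plan is to mirror the proof of Proposition \ref{reductions2}, but adapted to the constraint that both $p$ and $q$ lie in the same unbalanced set $\widecheck{\mathcal{P}}_b^{i}$. By the symmetry of $\Psi_j$ under coordinate permutations we may take $i=1$, and by passing to the closure (which is harmless since $\Psi_j$ is continuous and we are taking a supremum) we may assume $p_1 \geq 1-\epsilon$ and $q_1 \geq 1-\epsilon$.

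The first key step is to pin down $p_1 = q_1 = 1-\epsilon$ exactly. Since $q_1 \geq 1-\epsilon$, Lemma \ref{lemma6} applies and shows that writing $p_1 = 1-\epsilon + \delta$ with $\delta > 0$ is strictly suboptimal: moving the mass $\delta$ from $p_1$ to $p_2$ strictly increases $\Psi_j$. Because $\Psi_j$ is symmetric in its two arguments, the same lemma applied with the roles of $p$ and $q$ exchanged also forces $q_1 = 1-\epsilon$ at the supremum. At this stage we have $\sum_{i\geq 2} p_i = \sum_{i\geq 2} q_i = \epsilon$, and the first coordinate is fixed throughout the remaining analysis.

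With the first coordinate frozen, I would then run the structural classification from Proposition \ref{reductions} on the remaining $b-1$ coordinates. Invoking Lemma \ref{lemma3} lets us arrange the zeros of $p$ and of $q$ so that they either occupy disjoint positions or coincide, which yields the same finite list of configurations (now with one coordinate fewer) as in Proposition \ref{reductions}. On each configuration the non-zero coordinates satisfy the hypotheses of Lemma \ref{lemma1} (when both vectors have non-zero entries in the corresponding block) or of Lemma \ref{lemma2} (when only one does), allowing us to replace the free values by their common averages. Absorbing the degenerate subcases (shared zeros) as limits of the non-degenerate ones, exactly as in the proof of Proposition \ref{reductions}, produces the claimed form with parameters $\alpha,\beta,\delta,\eta \geq 0$ and the stated normalizations.

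For the final assertion about the number of zeros, I would apply Lemma \ref{lemma4}. Since $p_1 = 1-\epsilon$ is fixed and the remaining mass of $p$ is only $\epsilon$, every $p_i \leq 1-\epsilon$, so the hypothesis of Lemma \ref{lemma4} with $\alpha = \epsilon$ holds. Thus any $p$ with at least $b-j+1$ zeros is dominated by the point $p = (1-\epsilon,\epsilon,0,\ldots,0)$, which has exactly $b-2$ zeros; the same argument on $q$ gives the analogous restriction. Hence we may assume that the number of zeros in each of $p$ and $q$ is either $b-2$ or at most $b-j$. The main potential obstacle is verifying that the mass movement in Lemma \ref{lemma6} does not push $p_2$ outside its admissible range, but this is automatic: since $\epsilon \leq 1/(j+1) < 1/2$, the moved mass $\delta \leq \epsilon$ keeps $p_2 \leq 2\epsilon < 1-\epsilon$, so we stay comfortably inside $\widecheck{\mathcal{P}}_b^1$ throughout.
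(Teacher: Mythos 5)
Your proof is correct and takes essentially the same approach as the paper's: both combine Lemmas~\ref{lemma1}--\ref{lemma4} with Lemma~\ref{lemma6}, differing only in order, in that you apply Lemma~\ref{lemma6} at the outset to pin $p_1=q_1=1-\epsilon$ and then run the structural reductions on coordinates $2,\ldots,b$, whereas the paper performs the structural reduction first (obtaining $\gamma,\zeta\geq1-\epsilon$) and invokes Lemma~\ref{lemma6} at the end. Your explicit check that the mass displaced by Lemma~\ref{lemma6} keeps $p$ inside $\widecheck{\mathcal{P}}_b^1$ is a small added detail that the paper leaves implicit.
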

\begin{proof}
In order to find the values $\widecheck{M}_3$ we need to restrict the function $\Psi_j$ to the pairs $(p,q)$ such that $p$ and $q$ belong to $\widecheck{\mathcal{P}}_b^{1}$ (by symmetry we can fix an arbitrary coordinate). 

Using Lemmas \ref{lemma1}, \ref{lemma2}, \ref{lemma3} and \ref{lemma4}, we see that $\widecheck{M}_3$ is attained in a point of the following form:
$$(\gamma, \overbrace{0,\dots,0}^{l_1},\underbrace{\alpha,\dots,\alpha}_{l_2},\overbrace{\beta,\dots,\beta}^{b-l_1-l_2-1};\zeta, \overbrace{\delta,\dots,\delta}^{l_1},\underbrace{0,\dots,0}_{l_2},\overbrace{\eta,\dots,\eta}^{b-l_1-l_2-1})$$
where $\alpha,\beta, \delta, \eta \geq 0$,  $\ \ \gamma, \zeta \geq 1-\epsilon$ and $$\gamma + l_2\alpha+(b-l_1-l_2-1)\beta=1=\zeta + l_1\delta+(b-l_1-l_2-1)\eta.$$
Finally, because of Lemma \ref{lemma6} a maximum with $\gamma, \zeta \geq 1-\epsilon$ is also attained in a point with $\gamma = \zeta = 1-\epsilon$.
\end{proof}
\begin{prop}\label{prop:M4u}
$\widecheck{M}_4$ is attained in one of the following points:
%$$
%(\gamma,\overbrace{0,\dots,0}^{l_1},\underbrace{\alpha,\dots,\alpha}_{l_2},\overbrace{\beta,\dots,\beta}^{b-l_1-l_2-2},0;0,\overbrace{\delta,\dots,\delta}^{l_1},\underbrace{0,\dots,0}_{l_2},\overbrace{\eta,\dots,\eta}^{b-l_1-l_2-2},\zeta)
%$$
%where $\alpha,\beta, \delta, \eta \geq 0 $,  $\gamma, \zeta \geq 1-\epsilon$, and $$l_2\alpha+(b-l_1-l_2-2)\beta+\gamma=1=l_1\delta+(b-l_1-l_2-2)\eta+\zeta.$$
\begin{enumerate}
\item[1)] for $(p;q)$ of the form
$$
(\gamma,\alpha, \ldots, \alpha,0;0,\delta,  \ldots, \delta, \zeta)
$$
where $\alpha,\delta\geq 0 $,  $\gamma, \zeta \geq 1-\epsilon$, and $$\gamma + (b-2) \alpha
=1=(b-2) \delta +\zeta.$$
\item[2)] for $(p;q)$ of the form
$$
(\gamma,\overbrace{0,\dots,0}^{l_1},\underbrace{\alpha,\dots,\alpha}_{b-l_1-2},0;0,\overbrace{\delta,\dots,\delta}^{l_1},\underbrace{\eta,\dots,\eta}_{b-l_1-2},\zeta)
$$
where $\alpha, \delta, \eta \geq 0 $,  $\gamma \geq 1-\epsilon$, $\zeta \in \{1-\epsilon, 1\}$, and $$(b-l_1-2) \alpha+\gamma=1=l_1\delta+(b-l_1-2)\eta+\zeta.$$
\item[3)] for $(p;q)$ of the form
$$
(\gamma,\overbrace{0,\dots,0}^{l_1},\underbrace{\alpha,\dots,\alpha}_{l_2},\overbrace{\beta,\dots,\beta}^{b-l_1-l_2-2},0;0,\overbrace{\delta,\dots,\delta}^{l_1},\underbrace{0,\dots,0}_{l_2},\overbrace{\eta,\dots,\eta}^{b-l_1-l_2-2},\zeta)
$$
where $\alpha,\beta, \delta, \eta \geq 0 $,  $\gamma, \zeta \in \{1-\epsilon, 1\}$, and $$l_2\alpha+(b-l_1-l_2-2)\beta+\gamma=1=l_1\delta+(b-l_1-l_2-2)\eta+\zeta.$$
\end{enumerate}
Moreover, we can assume that the number of zeros that appear in $p$ and in $q$ is either $b-1$ or at most $b-j$.
\end{prop}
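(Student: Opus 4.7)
The plan is to follow the blueprint of Proposition \ref{prop:M3u}, adapting it to the configuration of $\widecheck{M}_4$ in which the heavy masses of $p$ and $q$ lie at distinct coordinates. By symmetry I would restrict to $p \in \widecheck{\mathcal{P}}_b^{1}$ and $q \in \widecheck{\mathcal{P}}_b^{b}$, so that $p_1, q_b \geq 1-\epsilon$. Two applications of Lemma \ref{lemma5} (the first as stated, the second with $p$ and $q$ interchanged) would then let me assume $q_1 = 0$ and $p_b = 0$; this is the key structural difference from $\widecheck{M}_3$, where the two heavy masses share a coordinate.

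Next, I would apply Lemmas \ref{lemma1}, \ref{lemma2}, \ref{lemma3} and \ref{lemma4} to the $b-2$ interior coordinates $2, \ldots, b-1$, grouping them into at most three classes of equal $(p_i, q_i)$ pairs: an $l_1$-block with values $(0, \delta)$, an $l_2$-block with values $(\alpha, 0)$, and a residual block with values $(\beta, \eta)$, both potentially positive. Combined with the normalization constraints, this yields a generic form that specializes to each of the three cases of the statement according to which of $l_1, l_2$ vanish.

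The main obstacle will be forcing $\gamma = p_1$ and $\zeta = q_b$ into $\{1-\epsilon, 1\}$ in Cases 2 and 3. In Proposition \ref{prop:M3u} the two heavy masses share a coordinate, so Lemma \ref{lemma6} applies directly at that coordinate and yields $\gamma = \zeta = 1-\epsilon$. Here the direct application fails because the coordinate opposite each heavy mass has the other distribution equal to zero: at position $1$ we have $q_1 = 0$ and at position $b$ we have $p_b = 0$, so the hypothesis $q_1 \geq 1-\epsilon$ of Lemma \ref{lemma6} (or its $p \leftrightarrow q$ analogue) cannot be invoked at the heavy positions themselves. When $l_1 = l_2 = 0$, no interior coordinate has both $p_i > 0$ and $q_i > 0$ available for pivoting, so $\gamma$ and $\zeta$ must be left free in $[1-\epsilon, 1]$, giving Case 1. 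When $l_1 \geq 1$, however, the $l_1$-block supplies a coordinate with $q_i > 0$ against which I would run an argument analogous to the proof of Lemma \ref{lemma6}, showing that $\Psi_j$ as a function of $\zeta$ along the feasible slice cannot attain its supremum in the open interval $(1-\epsilon, 1)$, and so $\zeta \in \{1-\epsilon, 1\}$, yielding Case 2. When additionally $l_2 \geq 1$ the symmetric argument on the $l_2$-block forces $\gamma \in \{1-\epsilon, 1\}$, yielding Case 3.

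The final remark on the number of zeros in $p$ and $q$ would follow from Lemma \ref{lemma4} exactly as in Proposition \ref{reductions2}: either the number of zeros is at most $b - j$, or it is $b - 1$, corresponding to the degenerate limits $\gamma = 1$ (so $p$ is concentrated at the first coordinate) or $\zeta = 1$ (so $q$ is concentrated at the $b$-th coordinate), which are already accommodated by the listed cases.
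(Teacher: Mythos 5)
Your overall plan tracks the paper's proof closely: restrict to $p \in \widecheck{\mathcal{P}}_b^{1}$, $q \in \widecheck{\mathcal{P}}_b^{b}$, use Lemma \ref{lemma5} (and its $p \leftrightarrow q$ version) to force $q_1 = 0$ and $p_b = 0$, reduce the interior coordinates into equal-value blocks via Lemmas \ref{lemma1}--\ref{lemma4}, and split into three cases by whether $l_1$ and $l_2$ vanish. The final remark via Lemma \ref{lemma4} also matches.

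The gap is in the step that pins $\gamma$ and $\zeta$ to $\{1-\epsilon, 1\}$ in Cases 2 and 3. You correctly observe that Lemma \ref{lemma6} does not apply (the coordinate opposite each heavy mass is zero), but the ``argument analogous to the proof of Lemma \ref{lemma6}'' you invoke is not filled in, and it is unlikely to go through: Lemma \ref{lemma6}'s proof produces a \emph{strictly increasing} direction by a sign argument that relies on the opposing distribution having a heavy mass at the pivot coordinate, and that sign structure is precisely what is missing here (we have $p_b = 0$, not $p_b \geq 1-\epsilon$). What the paper uses instead is Lemma \ref{lemma2}: when $l_1 \geq 1$, coordinate $b$ together with the $l_1$ interior positions form a set of at least two coordinates where $p$ is constantly zero, so along any line through them the restriction of $\Psi_j$ is an even quadratic; Lemma \ref{lemma2} then forces either equality of the corresponding $q$-values or a boundary maximum, and since $\delta < 1-\epsilon \leq \zeta$ cannot persist in the interior, one of $\delta, \zeta$ must sit on its boundary (and the $\delta$-options are quickly excluded or absorbed into the other configurations), giving $\zeta \in \{1-\epsilon, 1\}$. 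The symmetric use of Lemma \ref{lemma2} on the $l_2$-block handles $\gamma$ in Case 3. In short, the pin-down step needs the degree-two/boundary dichotomy of Lemma \ref{lemma2}, not a first-order improvement in the style of Lemma \ref{lemma6}, and this is the point your proposal leaves open. A secondary slip: in Case 1 the middle $b-2$ coordinates do have both $p_i = \alpha > 0$ and $q_i = \delta > 0$ in general, so the phrasing ``no interior coordinate has both $p_i > 0$ and $q_i > 0$'' is not the right reason Lemma \ref{lemma2} gives nothing there; the reason is that with $l_1 = l_2 = 0$ there is only a \emph{single} coordinate where $p$ (resp.\ $q$) is zero, so Lemma \ref{lemma2} has no second coordinate to average against, leaving $\gamma, \zeta$ unconstrained in $[1-\epsilon, 1]$.
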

\begin{proof}
In order to find the values $\widecheck{M}_4$ we need to restrict the function $\Psi_j$ to the pairs $(p;q)$ such that $p$ belongs to $\widecheck{\mathcal{P}}_b^{1}$ and $q$ belongs to $\widecheck{\mathcal{P}}_b^{b}$ (by symmetry we can choose, arbitrarily, two different coordinates). 

Using Lemmas \ref{lemma1}, \ref{lemma2}, \ref{lemma3}, \ref{lemma4} and \ref{lemma5} we see that $\widecheck{M}_4$ is attained in a point of the following form:
$$
(\gamma,\overbrace{0,\dots,0}^{l_1},\underbrace{\alpha,\dots,\alpha}_{l_2},\overbrace{\beta,\dots,\beta}^{b-l_1-l_2-2},0;0,\overbrace{\delta,\dots,\delta}^{l_1},\underbrace{0,\dots,0}_{l_2},\overbrace{\eta,\dots,\eta}^{b-l_1-l_2-2},\zeta)
$$
where $\alpha,\beta, \delta, \eta \geq 0 $,  $\gamma, \zeta \geq 1-\epsilon$, and $$l_2\alpha+(b-l_1-l_2-2)\beta+\gamma=1=l_1\delta+(b-l_1-l_2-2)\eta+\zeta.$$
Finally, we can split this case into three cases. The first one is for $l_1 = l_2 = 0$, the second one for $l_1>0, l_2 = 0$ and the third one for $l_1, l_2 > 0$.  By symmetry the case $l_1 = 0, l_2 > 0$ is included in the second case.  For the second case, by Lemma \ref{lemma2} it is easy to see that $\delta$ or $\zeta$ must be on the boundary in order to be a valid point for $\widecheck{M}_4$. The same argument can be carried out for the third case which implies that $\gamma, \zeta \in \{1-\epsilon, 1\}$.
\end{proof}
\begin{prop}\label{prop:M1l}
$\widehat{M}_1$ is attained in a point $(p;q)$ of the following form:
$$(\overbrace{\epsilon,\dots,\epsilon}^{l_1},\underbrace{\alpha,\dots,\alpha}_{l_2},\overbrace{\beta,\dots,\beta}^{b-l_1-l_2};\overbrace{\delta,\dots,\delta}^{l_1},\underbrace{\epsilon,\dots,\epsilon}_{l_2},\overbrace{\eta,\dots,\eta}^{b-l_1-l_2})$$
where $\alpha,\beta, \delta, \eta \geq \epsilon$ and $$l_1 \epsilon + \alpha+(b-l_1-l_2)\beta=1=l_2 \epsilon + l_1\delta+(b-l_1-l_2)\eta.$$
\end{prop}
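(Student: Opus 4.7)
The plan is to reduce a generic maximizer $(p^*,q^*)\in\widehat{\mathcal{P}}_b^0\times\widehat{\mathcal{P}}_b^0$ to the claimed form by alternating applications of Lemmas~\ref{lemma1}, \ref{lemma2} and \ref{lemma3}, in the style of the proofs of Propositions~\ref{reductions}, \ref{prop:M3u} and \ref{prop:M4u}. First I would apply Lemma~\ref{lemma3} exhaustively: after relabeling coordinates, we may assume the $p$-components are in non-decreasing order and the $q$-components in non-increasing order. Since every coordinate of $p$ and $q$ is at least $\epsilon$, the indices where $p_i=\epsilon$ then form a prefix $\{1,\ldots,l_1\}$ and those where $q_i=\epsilon$ a suffix $\{b-l_2+1,\ldots,b\}$; the remaining ``middle'' indices satisfy $p_i,q_i>\epsilon$. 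Note that Lemma~\ref{lemma4} is not used here because no coordinate is forced to vanish.

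Next I would apply Lemma~\ref{lemma1} to the middle block, which is an interior box for both $p$ and $q$: either the $p_i$'s and $q_i$'s in the middle can be averaged to common values $\beta$ and $\eta$, or the optimum is attained on the boundary of the box, in which case some coordinate is pushed to $\epsilon$ in $p$ or $q$ and thereby moves into the prefix or the suffix, after which I iterate. On the prefix the $p$-values are all fixed at $\epsilon$, so, using the symmetry $\Psi_j(p;q)=\Psi_j(q;p)$ (immediate from \eqref{eq:defPsi}) and applying Lemma~\ref{lemma2} with the roles of $p$ and $q$ swapped, the $q$-values on the prefix can be averaged to a common $\delta\ge\epsilon$ unless one of them is driven to $\epsilon$. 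The symmetric argument on the suffix yields a common $\alpha\ge\epsilon$ for the $p$-values there.

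The delicate point, paralleling the case splits in the proofs of Propositions~\ref{reductions} and~\ref{prop:M4u}, is the possible appearance of ``doubly constrained'' coordinates with $p_i=q_i=\epsilon$, which arise when the prefix/suffix averaging hits the boundary. If the middle block is non-empty, one more application of Lemma~\ref{lemma3} removes each such coordinate: for a doubly constrained index $i$ and a middle index $j$, we have $p_i\le p_j$ and $q_i\le q_j$, so swapping $q_i$ and $q_j$ does not decrease $\Psi_j$ and produces one prefix-type pair $(\epsilon,\eta)$ and one suffix-type pair $(\beta,\epsilon)$; re-averaging then restores homogeneity. If the middle block becomes empty, then $b=l_1+l_2$ and any remaining doubly constrained coordinate is absorbed into the prefix with $\delta=\epsilon$ or into the suffix with $\alpha=\epsilon$, consistent with the inequalities $\alpha,\beta,\delta,\eta\ge\epsilon$ in the statement. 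The main obstacle will be to verify that these alternating reductions terminate in a configuration of the stated form; this should follow from a monotonicity argument (e.g.\ the total number of distinct $(p_i,q_i)$ pairs appearing, together with the number of doubly constrained coordinates, strictly decreases at each step) combined with the finiteness of possible group configurations for fixed $b$.
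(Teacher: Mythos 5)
Your approach matches the paper's, which simply invokes Lemmas~\ref{lemma1}, \ref{lemma2} and \ref{lemma3} without further detail, and your iteration scheme is in the same spirit as the paper's more explicit treatments of Propositions~\ref{reductions} and~\ref{prop:M4u}. Two points would tighten the argument. First, the ``doubly constrained'' complication mostly dissolves once Lemma~\ref{lemma3} has been applied exhaustively: if $p$ is non-decreasing, $q$ non-increasing, and $p_i=q_i=\epsilon$ for some $i$, then $p_h=\epsilon$ for all $h\le i$ and $q_h=\epsilon$ for all $h\ge i$, so the middle block is automatically empty; thus the swap you describe is just the re-sort step you already perform after a boundary hit, and the surviving doubly constrained indices are not ``absorbed into the prefix with $\delta=\epsilon$ or the suffix with $\alpha=\epsilon$'' (which would break constancy of $\delta$ or $\alpha$ when those exceed $\epsilon$) but simply constitute the third block with $\beta=\eta=\epsilon$, which the statement permits. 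Second, the termination argument you defer can be replaced by a one-shot extremal choice: among all maximizers of $\Psi_j$ on $\widehat{\mathcal{P}}_b^0\times\widehat{\mathcal{P}}_b^0$, fix one that maximizes $N_\epsilon:=\#\{i:p_i=\epsilon\}+\#\{i:q_i=\epsilon\}$ (Lemma~\ref{lemma3} swaps preserve $N_\epsilon$, so anti-sorting is still available); taking the intervals in Lemmas~\ref{lemma1} and \ref{lemma2} to be $[\epsilon,1]$, the only attainable box boundary under the simplex constraint is the lower endpoint $\epsilon$, and reaching it would strictly increase $N_\epsilon$, contradicting the choice --- hence the boundary alternative in Lemmas~\ref{lemma1}/\ref{lemma2} never occurs for this maximizer and the averaging conclusions hold directly, with no iteration needed.
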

\begin{proof}
In order to find the values $\widehat{M}_1$ we need to restrict the function $\Psi_j$ to the pairs $(p,q)$ such that $p$ and $q$ belong to $\widehat{\mathcal{P}}_b^{0}$.  Using Lemmas \ref{lemma1},  \ref{lemma2} and \ref{lemma3} we obtain the thesis.
\end{proof}
\begin{prop}\label{prop:M2l}
$\widehat{M}_2$ is attained in one of the following points:
\begin{enumerate}
\item[1)]  for $(p;q)$ of the form
$$(\overbrace{\alpha,\dots, \alpha}^{l_1},\underbrace{\beta,\dots,\beta}_{b-l_1};\overbrace{\eta,\dots,\eta}^{l_1},\underbrace{\epsilon,\dots,\epsilon}_{b-l_1})$$
where $0 \leq \alpha \leq \epsilon$, $\ \ \beta \geq 0$, $\ \ \eta \geq \epsilon$, and $$l_1 \alpha+(b-l_1)\beta=1=l_1\eta+(b-l_1) \epsilon.$$
\item[2)]  for $(p;q)$ of the form
 $$(\epsilon,  \overbrace{\alpha,\dots, \alpha}^{l_1},\underbrace{\beta,\dots,\beta}_{b-l_1-1};\zeta,\overbrace{\eta,\dots,\eta}^{l_1},\underbrace{\epsilon,\dots,\epsilon}_{b-l_1-1})$$
where $\alpha,\beta \geq 0$, $\ \ \zeta, \eta \geq \epsilon$, and $$\epsilon + l_1\alpha+(b-l_1-1)\beta=1=\zeta + l_1\eta+(b-l_1-1) \epsilon.$$
\item[3)]  for $(p;q)$ of the form
 $$(\overbrace{0,\dots,0}^{l_1},\underbrace{\alpha,\dots,\alpha}_{l_2},\overbrace{\beta,\dots,\beta}^{b-l_1-l_2};\overbrace{\delta,\dots,\delta}^{l_1},\underbrace{\eta,\dots,\eta}_{l_2},\overbrace{\epsilon,\dots,\epsilon}^{b-l_1-l_2})$$
where $\alpha,\beta \geq 0$, $\ \ \delta, \eta \geq \epsilon $ and $$l_2\alpha+(b-l_1-l_2)\beta=1=l_1\delta+(b-l_1-l_2)\eta.$$
Moreover, we can assume that the number of zeros that appear in $p$ is either $b-1$ or at most $b-j$.
\end{enumerate}
\end{prop}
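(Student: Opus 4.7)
The plan is to follow the template established for Propositions \ref{prop:M1l}, \ref{prop:M3u} and \ref{prop:M4u}. By the symmetry of $\Psi_j$ and of the sets $\widehat{\mathcal{P}}_b^i$, I would fix $q \in \widehat{\mathcal{P}}_b^1$, so that $q_1 < \epsilon$ is the strict minimum of $q$, while the constraint $p \in \widehat{\mathcal{P}}_b^0$ reads $p_h \geq \epsilon$ for every $h$. Since $\Psi_j$ is continuous on a bounded set, the supremum is attained on the closure, so equality in these constraints is admissible. As a first step I would invoke Lemma \ref{lemma3} on coordinates $h = 2,\dots,b$ to co-sort $p$ and $q$ on those coordinates.

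Next I would partition $\{2,\dots,b\}$ into maximal blocks of coordinates where both $p$ and $q$ are constant and apply Lemmas \ref{lemma1} and \ref{lemma2} to each block. Lemma \ref{lemma1} guarantees that on each such block either the $p$- and $q$-values can both be averaged (merging adjacent blocks) or we hit the boundary of the feasible set for that block, which here corresponds to $p_h$ saturating at its lower bound $\epsilon$ or $q_h$ saturating at $\epsilon$ or at $q_1$. Lemma \ref{lemma2} completes the argument on blocks where one of the two vectors is already constant. Iterating until no further averaging is possible, the candidate maximisers reduce to configurations where $p$ takes values in $\{\alpha,\beta,\epsilon\}$ (up to permutation) and $q$ takes values in $\{\eta,\epsilon,q_1\}$, with $q_1\in \{0,\epsilon\}$ (attained only in the closure).

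The three cases in the statement then correspond to which boundary values are active. Case~1 arises when the only active $q$-boundary is $q_h=\epsilon$ and no $p$-boundary is active: $p$ displays two free values and $q$ has a uniform tail at $\epsilon$. Case~2 arises when $p_1=\epsilon$ activates the $p$-boundary, isolating the first coordinate; the $q$-side is as in Case~1 but with an extra coordinate $\zeta$ free above $\epsilon$. Case~3 arises when $q_1=0$ is active (the closure boundary of $\widehat{\mathcal{P}}_b^1$), which by Lemma \ref{lemma3} forces the matching coordinates of $p$ to attain their own minimum; Lemma \ref{lemma4} then lets me restrict the number of zeros in $p$ to be either $b-1$ or at most $b-j$, exactly as in the last paragraph of Proposition \ref{reductions2}.

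The hard part will be the combinatorial bookkeeping: ensuring that the joint application of Lemmas \ref{lemma1} and \ref{lemma2} does not produce intermediate configurations outside the three listed cases, and that after the co-sorting step the ordering required by the definition of $\widehat{\mathcal{P}}_b^1$ is preserved. I would handle this exactly as in Proposition \ref{prop:M4u}, by organising the sub-cases according to which boundaries are active (that is, according to the triple of block sizes $(l_1,l_2,b-l_1-l_2)$ and which of $p_1,q_1$ sit on the boundary) and re-applying Lemma \ref{lemma2} on any residual block where only one of the two vectors has become constant, until the classification collapses to the three families of the statement.
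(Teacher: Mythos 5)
Your overall strategy (exploit the symmetry of $\Psi_j$, co-sort with Lemma \ref{lemma3}, average blocks with Lemmas \ref{lemma1} and \ref{lemma2}, reduce the zeros with Lemma \ref{lemma4}, and classify by which boundaries are active) is the same as the paper's, and your three cases do correspond, roughly, to the paper's split into ``first coordinate mergeable into the interior'', ``first coordinate saturated at $\epsilon$'', and ``first coordinate equal to $0$''. But there is one concrete missing idea. The cell $\widehat{\mathcal{P}}_b^{1}$ is \emph{not} a product of intervals: besides $q_1<\epsilon$ it imposes the coupled constraints $q_h\geq q_1$ for all $h$, so the hypotheses of Lemmas \ref{lemma1} and \ref{lemma2} (which require the feasible set, projected on the averaged coordinates, to be a fixed box $I_1\times\cdots\times I_\ell$) are not met, and your averaging step would also generate boundary configurations of the form $q_h=q_1$ that are not absorbed by your three families. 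You flag exactly this as ``the hard part'' and propose to handle it ``exactly as in Proposition \ref{prop:M4u}'', but that analogy fails: the max-based cells $\widecheck{\mathcal{P}}_b^{i}$ are genuine boxes with no inter-coordinate coupling, which is why Proposition \ref{prop:M4u} needs no extra device. The paper's resolution is to first \emph{relax} the domain of the unbalanced vector to $[0,\epsilon]\times[0,1]^{b-1}$, observing that any such distribution still lies in some $\widehat{\mathcal{P}}_b^{i}$ with $i\geq 1$, so by symmetry the relaxed supremum still yields valid candidates for $\widehat{M}_2$; only after this decoupling are Lemmas \ref{lemma1}--\ref{lemma4} applied. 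The same relaxation is used for $\widehat{M}_3$ in Proposition \ref{prop:M3l}; without it your reduction does not close.

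A secondary point: you have swapped the roles of $p$ and $q$ relative to the statement (you put $q$ in $\widehat{\mathcal{P}}_b^{1}$ and $p$ in $\widehat{\mathcal{P}}_b^{0}$), which is harmless for $\Psi_j$ itself but makes your Case 3 internally inconsistent --- you invoke Lemma \ref{lemma4} to control ``the number of zeros in $p$'' while your $p$ satisfies $p_h\geq\epsilon$ for all $h$ and hence has no zeros. The zeros, and the ``$b-1$ or at most $b-j$'' conclusion, pertain to the unbalanced vector, which in the statement is called $p$.
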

\begin{proof}
In order to find the values $\widehat{M}_2$ we need to restrict the function $\Psi_j$ to the pairs $(p,q)$ such that $p$ belongs to $\widehat{\mathcal{P}}_b^{1}$ and $q$ belongs to $\widehat{\mathcal{P}}_b^{0}$.  In addition, we relax the domain of $p$ by removing the constraint on $p_1$ to be a minimum coordinate., i.e., $p \in [0, \epsilon] \times
[0,1]^{b-1}$. 
However,  this implies that $p$ belongs to $\widehat{\mathcal{P}}_b^{i}$ for some $i \in [1, b]$.  Therefore, by symmetry, we are still considering valid candidates for $\widehat{M}_2$ under this domain.

Using Lemmas \ref{lemma1}, \ref{lemma2}, \ref{lemma3} and \ref{lemma4}, we see that $\widehat{M}_2$ is attained in a point of the following form:
$$(\gamma,  \overbrace{0,\dots,0}^{l_1}, \underbrace{\alpha,\dots, \alpha}_{b-l_1-l_2-1},\overbrace{\beta,\dots,\beta}^{l_2};\zeta,\overbrace{\delta,\dots,\delta}^{l_1}, \underbrace{\eta,\dots,\eta}_{b-l_1-l_2-1},\overbrace{\epsilon,\dots,\epsilon}^{l_2})$$
where $\alpha,\beta \geq 0$,  $\ \ 0 \leq \gamma \leq \epsilon$,  $\ \ \zeta, \delta, \eta \geq \epsilon$, and $$\gamma + (b-l_1-l_2-1)\alpha+l_2 \beta=1=\zeta + l_1\delta+ (b-l_1-l_2-1)\eta + l_2 \epsilon.$$
Finally,  we can split this case into three cases. The first one is when $l_1 = 0$ and the average between $\gamma$ and the $\alpha$-components is less than or equal to $\epsilon$,  the second one for $\gamma = \epsilon$ and $l_1=0$ while the third one for $\gamma = 0$ and $l_1 \geq 0$.  We have not considered the case $\gamma = \epsilon$ and $l_1 >  0$ since it is a subcase of the third one.
\end{proof}
\begin{prop}\label{prop:M3l}
An upper bound on $\widehat{M}_3$ is obtained by computing the maximum of $\Psi_j$ over points of the following form:
\begin{enumerate}
\item[1)] for $(p;q)$ of the form
$$(\beta, \overbrace{0,\dots,0}^{l_1},\underbrace{\alpha,\dots,\alpha}_{l_2},\overbrace{\beta,\dots,\beta}^{b-l_1-l_2-1};\eta, \overbrace{\delta,\dots,\delta}^{l_1},\underbrace{0,\dots,0}_{l_2},\overbrace{\eta,\dots,\eta}^{b-l_1-l_2-1})$$
where $\alpha, \delta \geq 0$,  $\ \ 0 \leq  \beta, \eta \leq \epsilon$ and $$l_2\alpha+(b-l_1-l_2)\beta=1= l_1\delta+(b-l_1-l_2)\eta.$$
\item[2)]  for $(p;q)$ of the form
$$(\gamma, \overbrace{0,\dots,0}^{l_1},\underbrace{\alpha,\dots,\alpha}_{l_2},\overbrace{\beta,\dots,\beta}^{b-l_1-l_2-1};0, \overbrace{\delta,\dots,\delta}^{l_1},\underbrace{0,\dots,0}_{l_2},\overbrace{\eta,\dots,\eta}^{b-l_1-l_2-1})$$
where $\alpha, \beta, \delta, \eta \geq 0$,  $\ \  0 \leq \gamma \leq \epsilon$ and $$\gamma + l_2\alpha+(b-l_1-l_2-1)\beta=1= l_1\delta+(b-l_1-l_2-1)\eta.$$
\item[3)]  for $(p;q)$ of the form
$$(\gamma, \overbrace{0,\dots,0}^{l_1},\underbrace{\alpha,\dots,\alpha}_{l_2},\overbrace{\beta,\dots,\beta}^{b-l_1-l_2-1};\epsilon, \overbrace{\delta,\dots,\delta}^{l_1},\underbrace{0,\dots,0}_{l_2},\overbrace{\eta,\dots,\eta}^{b-l_1-l_2-1})$$
where $\alpha, \beta, \delta, \eta \geq 0$,  $\ \ 0 \leq  \gamma \leq \epsilon$ and $$\gamma + l_2\alpha+(b-l_1-l_2-1)\beta=1= \epsilon+ l_1\delta+(b-l_1-l_2-1)\eta.$$
\end{enumerate}
Moreover, we can assume that the number of zeros that appear in $p$ and in $q$ is either $b-1$ or at most $b-j$.
\end{prop}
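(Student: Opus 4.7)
My plan is to follow the strategy used in the proof of Proposition \ref{prop:M2l}, with one technical twist: since both $p$ and $q$ now sit in $\widehat{\mathcal{P}}_b^{1}$, I shall work with a relaxed feasible set that yields an upper bound on $\widehat{M}_3$ rather than its exact value (which is why the statement is phrased in terms of an upper bound). The argument decomposes into three stages: a symmetry-based domain relaxation, a structural reduction that follows verbatim from the preceding lemmas, and a final boundary analysis for the two free first coordinates $\gamma := p_1$ and $\zeta := q_1$.

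First, by permutation symmetry it is enough to restrict to pairs whose smallest coordinate sits in position~$1$ for both $p$ and $q$. I then relax the domain to
\[
\{(p,q) : p_1, q_1 \in [0,\epsilon],\ p_h, q_h \in [0,1]\ \text{for } h \ge 2\},
\]
dropping the requirement that coordinate~$1$ actually attain the minimum of each distribution. Every maximiser on this larger set still satisfies $p_1 \le \epsilon$ and $q_1 \le \epsilon$, so after a suitable coordinate permutation it lies in some $\widehat{\mathcal{P}}_b^{i} \times \widehat{\mathcal{P}}_b^{i'}$ with $i,i' \ge 1$; by the symmetry of $\Psi_j$, the supremum over the relaxed set therefore upper bounds $\widehat{M}_3$, which is the conclusion sought.

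On this relaxed set I then apply Lemmas \ref{lemma1}--\ref{lemma4} in exactly the same pattern used for Proposition \ref{prop:M2l}: Lemma \ref{lemma3} orders the coordinates $2,\dots,b$ oppositely in $p$ and $q$, aligning the $l_1$ zeros of $p$ with the large entries $\delta$ of $q$ (and producing an additional $l_2$-long common-zero block); Lemmas \ref{lemma1} and \ref{lemma2} then average the remaining free components of $p$ and $q$ within each segment into equal blocks; and Lemma \ref{lemma4} enforces the zero-count dichotomy appearing in the final remark. The outcome is the parametric family
\[
(\gamma, \overbrace{0,\dots,0}^{l_1}, \underbrace{\alpha,\dots,\alpha}_{l_2}, \overbrace{\beta,\dots,\beta}^{b-l_1-l_2-1};\ \zeta, \overbrace{\delta,\dots,\delta}^{l_1}, \underbrace{0,\dots,0}_{l_2}, \overbrace{\eta,\dots,\eta}^{b-l_1-l_2-1}),
\]
with $\gamma, \zeta \in [0,\epsilon]$ and $\alpha, \beta, \delta, \eta \ge 0$, subject to the obvious normalisations.

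As a final step, I pair position~$1$ with any single position from the trailing block of length $b-l_1-l_2-1$ and invoke Lemma \ref{lemma1} on the resulting two-coordinate sub-problem. The lemma yields exactly two alternatives: either $\gamma = \beta$ and $\zeta = \eta$ (collapsing the first coordinate into the tail block and producing case~$1$ of the statement), or at least one of $\gamma, \zeta$ lies on the boundary of $[0,\epsilon]$ (boundary values $0$ and $1$ for $\beta, \eta$ are already absorbed into the existing block parameters). Using the $p$-$q$ symmetry of $\Psi_j$, it suffices to list the two boundary possibilities for $\zeta$: $\zeta = 0$ yields case~$2$ and $\zeta = \epsilon$ yields case~$3$, with $\gamma \in [0,\epsilon]$ left free in each. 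The main delicacy I anticipate is making this last averaging-versus-boundary dichotomy rigorous: one must check that the hypotheses of Lemma \ref{lemma1} genuinely apply to the pair of coordinates in question after all the earlier reductions, and that the symmetric boundary situations where $\gamma$ is pinned to $\{0,\epsilon\}$ are indeed covered by the $p$-$q$ mirror images of cases~$2$ and~$3$ rather than producing additional parametric families.
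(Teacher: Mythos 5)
Your proposal is correct and follows essentially the same route as the paper's proof: the same symmetry-based relaxation to $p, q \in [0,\epsilon] \times [0,1]^{b-1}$, the same structural reduction via Lemmas \ref{lemma1}--\ref{lemma4}, and the same three-way treatment of $\gamma = p_1$ and $\zeta = q_1$ with the mirror cases absorbed by the $p$--$q$ symmetry of $\Psi_j$. The only superficial difference is in how case~1 is phrased: you invoke the interior/boundary dichotomy of Lemma \ref{lemma1} directly, while the paper states an averaging-feasibility condition (that the average of $\gamma$ with the $\beta$-block, and of $\zeta$ with the $\eta$-block, stay below $\epsilon$), but these are the same argument in different clothing.
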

\begin{proof}
In order to find and upper on the values $\widehat{M}_3$ we need to restrict the function $\Psi_j$ to the pairs $(p,q)$ such that $p$ and $q$ belong to $\widehat{\mathcal{P}}_b^{1}$ (by symmetry we can fix an arbitrary coordinate). In addition, we relax the domains of $p$ and $q$ by removing the constraints on $p_1$ and $q_1$ to be minimum components, i.e.,  $p, q \in [0, \epsilon] \times [0,1]^{b-1}$.

Using Lemmas \ref{lemma1}, \ref{lemma2}, \ref{lemma3} and \ref{lemma4},  we see that under this extended domain an upper bound on $\widehat{M}_3$ is attained in a point of the following form:
$$(\gamma, \overbrace{0,\dots,0}^{l_1},\underbrace{\alpha,\dots,\alpha}_{l_2},\overbrace{\beta,\dots,\beta}^{b-l_1-l_2-1};\zeta, \overbrace{\delta,\dots,\delta}^{l_1},\underbrace{0,\dots,0}_{l_2},\overbrace{\eta,\dots,\eta}^{b-l_1-l_2-1})$$
where $\alpha, \beta, \delta, \eta \geq 0$,  $\ \ 0 \leq \gamma, \zeta \leq \epsilon$ and $$\gamma + l_2\alpha+(b-l_1-l_2-1)\beta=1=\zeta+ l_1\delta+(b-l_1-l_2-1)\eta.$$
Finally,  we can split this case into three cases. The first one is when the averages between $\gamma$ and the $\beta$-components and between $\zeta$ and the $\eta$-components are less than or equal to $\epsilon$, the second one for $0 \leq \gamma \leq \epsilon$ and $\zeta = 0$, and the third one for $0 \leq \gamma \leq \epsilon$ and $\zeta = \epsilon$.  By symmetry,  the cases in which $\gamma = 0$ and $0 \leq \zeta \leq \epsilon$ or $\gamma = \epsilon$ and $0 \leq \zeta \leq \epsilon$ are included in the second and third cases.
\end{proof}
\begin{prop}\label{prop:M4l}
$\widehat{M}_4$ is upper bounded by the global maximum of $\Psi_j$ which is attained in a point $(p;q)$ of the following form:
$$(\overbrace{0,\dots,0}^{l_1},\underbrace{\alpha,\dots,\alpha}_{l_2},\overbrace{\beta,\dots,\beta}^{b-l_1-l_2};\overbrace{\delta,\dots,\delta}^{l_1},\underbrace{0,\dots,0}_{l_2},\overbrace{\eta,\dots,\eta}^{b-l_1-l_2})$$
where $\alpha,\beta, \delta, \eta \geq 0 $ and $$l_2\alpha+(b-l_1-l_2)\beta=1=l_1\delta+(b-l_1-l_2)\eta.$$
Moreover, we can assume that the number of zeros that appear in $p$ and in $q$ is either $b-1$ or at most $b-j$.
\begin{proof}
Analogous to the proof of Proposition \ref{prop:M2u}.
\end{proof}
\end{prop}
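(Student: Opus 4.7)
The plan is to use the trivial inclusion $\widehat{\mathcal{P}}_b^{i}\times\widehat{\mathcal{P}}_b^{h}\subseteq \mathcal{P}_b\times\mathcal{P}_b$ to upper bound $\widehat{M}_4$ by the unconstrained global maximum of $\Psi_j(p;q)$ over all pairs of probability vectors. Once the constrained problem is replaced by the unconstrained one, the proof of Proposition \ref{prop:M2u} can be replicated almost verbatim, since the claimed conclusion has exactly the same form. The strategy is therefore to characterize the global maximizer of $\Psi_j$ by successive applications of Lemmas \ref{lemma1}--\ref{lemma4}.

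First, I would invoke Lemma \ref{lemma3} at any maximizer $(\overline{p};\overline{q})$ to reorder coordinates so that zeros of $\overline{p}$ are paired with positive entries of $\overline{q}$ and vice-versa; any coordinate where both entries vanish can be absorbed into another block without changing the value of $\Psi_j$. This partitions the $b$ coordinates into three blocks: $l_1$ positions of type $(0,\delta)$, $l_2$ positions of type $(\alpha,0)$, and $b-l_1-l_2$ positions where both entries are strictly positive, of type $(\beta,\eta)$.

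Next I would apply Lemmas \ref{lemma1} and \ref{lemma2} inside each block. Lemma \ref{lemma1} forces the $\overline{p}$-components (resp.\ $\overline{q}$-components) inside the mixed-positive block to be constant, unless the maximum lies on a boundary face of that block, in which case some entry drops to zero and that coordinate is absorbed into one of the other two blocks; iterating terminates in the claimed form. Lemma \ref{lemma2} handles the two blocks where one of the vectors is identically zero, forcing equality of the corresponding free components of the other vector.

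Finally I would invoke Lemma \ref{lemma4} with $\alpha=0$, separately on $p$ and $q$: any candidate maximizer in which one vector has support of size strictly between $1$ and $j$, equivalently a number of zeros strictly between $b-j$ and $b-1$, can be replaced by one where that vector has a single nonzero coordinate, i.e.\ exactly $b-1$ zeros. This yields the final assertion of the proposition. The only delicate point, as in Proposition \ref{prop:M2u}, is the bookkeeping needed to verify that the iterative reductions by Lemmas \ref{lemma1}--\ref{lemma3} actually terminate in the stated three-block canonical form; this is routine once one notes that each iteration either equates components within a block or strictly decreases the number of distinct positive values among the components of $\overline{p}$ and $\overline{q}$.
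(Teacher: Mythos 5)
Your proposal is correct and follows essentially the same route as the paper: the paper's proof of Proposition \ref{prop:M4l} simply defers to Proposition \ref{prop:M2u}, whose proof is exactly the relaxation to the unconstrained global maximum of $\Psi_j$ followed by the reductions of Lemmas \ref{lemma1}, \ref{lemma2}, \ref{lemma3} and \ref{lemma4}. Your write-up in fact spells out the block structure and the application of Lemma \ref{lemma4} with $\alpha=0$ in more detail than the paper does, and the bookkeeping you flag (termination of the iterative averaging/boundary reductions) is handled correctly.
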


\begin{rem}
%Each configuration that appears in the list of possible maxima for $\widecheck{M}_1$, $\widecheck{M}_2$, $\widecheck{M}_3$,  $\widecheck{M}_4$,  $\widehat{M}_1$,  $\widehat{M}_2$,  $\widehat{M}_4$ of, respectively, Propositions \ref{reductions2}, \ref{prop:M2u}, \ref{prop:M3u}, \ref{prop:M1l}, \ref{prop:M2l}, \ref{prop:M4l} lead to an optimization problem that depends on at most $2$ free variables.  Otherwise,  each configuration of possible maxima for $\widehat{M}_3$ that appears in Proposition \ref{prop:M3l} lead to an optimization problem that depends on at most $3$ free variables.  Therefore, for a given pair of $b$ and $k$, we are now able to analytically determine, using Mathematica,  those maxima. 

Each configuration that appears in the list of possible maxima in the previous propositions leads to an optimization problem that depends on at most $3$ free variables.  Therefore, for given $b$ and $k$, we can analytically determine, using Mathematica,  those maxima. 
\label{rem:3vars}
%We provide some additional simplifications to reduce the number of free variables via gradient method for this maxima and fixed $(b,k)$-cases. Since the computations are tedious, we defer the reader to the following Mathematica code:
%\begin{center}
%\url{http://stefano-dellafiore.me/additional_material_bk_hashing.zip}.
%\end{center}
\end{rem}

The previous propositions allow us to determine a finite list of maxima candidates for each $\widecheck{M}_i$ and $\widehat{M}_i$. We have analytically determined and inspected using Mathematica all the possible maximum points. 
We have restricted our attention to $(b,k)$-cases for small $b$ and $k$ in order to avoid excessive computational complexity. It is important to note that for the $(b,k)$-cases that we have considered (see Propositions \ref{Mabove} and \ref{Mbelow}) the global maximum of $\Psi_j$,  for $j=k-2$, satisfy the domains of $\widecheck{M}_2$  and $\widehat{M}_4$. Therefore for these particular cases, we are not upper bounding the values of $\widecheck{M}_2$  and $\widehat{M}_4$ but, instead, we are computing the exact values.  Based on the results of computations, we choose the values of $j$ and $\epsilon$ for each $(b,k)$-case to improve the current best-known bounds on $R_{(b,k)}$.  A more careful choice of these parameters could lead to better bounds except for the case $b=k=6$ (see Remark \ref{rem3}).

\begin{prop}\label{Mabove}
%For $j=k-2$, $(b,k) = (7, 7)$, $(8, 8)$, $(9,8)$, $(10, 9)$, $(11, 10)$, respectively $\epsilon = \frac{9}{100}, \frac{3}{25}, \frac{1}{10}, \frac{1}{15}, \frac{1}{11}$ we have
For $j=k-2$, for the values of $\epsilon$ shown, the $\widecheck{M}_i$'s are as shown in the following table

%$\widecheck{M}_1$ is attained at $(\frac{1}{k}, \ldots, \frac{1}{k}; \frac{1}{k}, \ldots, \frac{1}{k})$ with value $\approx 0.085679, 0.038453$, \\
%$\widecheck{M}_2$ is attained at $(1, 0, \ldots, 0; 0, \frac{1}{k-1}, \ldots, \frac{1}{k-1})$ with value $\approx 0.092593, 0.042840$, \\
%$\widecheck{M}_3$ is attained at $(1-\epsilon, \frac{\epsilon}{k-1}, \ldots, \frac{\epsilon}{k-1}; 1-\epsilon, \frac{\epsilon}{k-1}, \ldots, \frac{\epsilon}{k-1})$ with value $\approx 5.986913 \cdot 10^{-6}, 1.58032 \cdot 10^{-6}$, \\
%$\widecheck{M}_4$ is attained at  $(1-\epsilon, \frac{\epsilon}{k-2}, \ldots, \frac{\epsilon}{k-2}, 0; 0, \frac{\epsilon}{k-2}, \ldots, \frac{\epsilon}{k-2},  1-\epsilon)$ with value $\approx 1.07 \cdot 10^{-4}, 2.2 \cdot 10^{-5}$.

\begin{table}[ht!]
\def\arraystretch{1.}
\centering
\begin{tabularx}{\linewidth}{c@{\extracolsep{\fill}}c@{\extracolsep{\fill}}c@{\extracolsep{\fill}}c@{\extracolsep{\fill}}c@{\extracolsep{\fill}}c@{\extracolsep{\fill}}l}
$(b,k)$ & $\epsilon$ & $\widecheck{M}_1$ & $\widecheck{M}_2$ & $\widecheck{M}_3$ & $\widecheck{M}_4$\\
\hline
$(7,7)$ & $9/100$ & 0.085679 & 0.092593 & 0.000006 & 0.000107\\
$(8,8)$ & $3/25$ & 0.038453 & 0.042840 & 0.000002 & 0.000022\\
$(9,8)$ & $1/10$ & 0.075870 & 0.076905 & 0.000001 & 0.000015\\
$(10,9)$ & $1/15$ & 0.036289 & 0.037935 & $3.4 \cdot 10^{-9}$ & $8.5 \cdot 10^{-8}$\\
$(11,10)$ & $1/11$ & 0.016928 & 0.018144 & $1.4 \cdot 10^{-9}$ & $2.7 \cdot 10^{-8}$\\
$(12,10)$ & $1/20$ & 0.030945 & 0.031036 & $2.1 \cdot 10^{-11}$ & $7.0 \cdot 10^{-9}$\\
$(13,11)$ & $1/25$ & 0.015057 & 0.015473 & $7.8 \cdot 10^{-14}$ & $3.5 \cdot 10^{-12}$\\
$(14,12)$ & $1/13$ & 0.007176 & 0.007529 & $1.2 \cdot 10^{-12}$ & $2.6 \cdot 10^{-11}$\\
$(15,13)$ & $1/12$ & 0.003360 & 0.003588 & $1.1 \cdot 10^{-13}$ & $2.3 \cdot 10^{-12}$\\
\hline
\end{tabularx}
\def\arraystretch{1.35}
\begin{tabularx}{\linewidth}{l}
$\widecheck{M}_1$ is attained at  $(\frac{1}{b}, \ldots, \frac{1}{b}; \frac{1}{b}, \ldots, \frac{1}{b})$\\
$\widecheck{M}_2$ is attained at $(1, 0, \ldots, 0; 0, \frac{1}{b-1}, \ldots, \frac{1}{b-1})$\\
$\widecheck{M}_3$ is attained at $(1-\epsilon, \frac{\epsilon}{b-1}, \ldots, \frac{\epsilon}{b-1}; 1-\epsilon, \frac{\epsilon}{b-1}, \ldots, \frac{\epsilon}{b-1})$\\
$\widecheck{M}_4$ is attained at $(1-\epsilon, \frac{\epsilon}{b-2}, \ldots, \frac{\epsilon}{b-2}, 0; 0, \frac{\epsilon}{b-2}, \ldots, \frac{\epsilon}{b-2},  1-\epsilon)$\\
\hline
\end{tabularx}
\end{table}
\end{prop}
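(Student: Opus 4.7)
The plan is to turn the computation of each of the four quantities $\widecheck{M}_1,\widecheck{M}_2,\widecheck{M}_3,\widecheck{M}_4$ into a finite enumeration of low-dimensional polynomial optimization problems, by invoking the structural reductions established in Propositions \ref{reductions2}, \ref{prop:M2u}, \ref{prop:M3u} and \ref{prop:M4u}. For each fixed $(b,k)$ pair in the table I set $j=k-2$ and fix $\epsilon$ as shown. Then, for each index $i\in\{1,2,3,4\}$, the relevant proposition lists a finite family of candidate configurations, parametrized by the integer multiplicities $l_1,l_2$ of coordinates carrying particular prescribed values (including $0$, $\epsilon$, $1-\epsilon$) and by at most three continuous parameters (e.g.\ $\alpha,\beta,\gamma$ or $\alpha,\beta,\eta$). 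For each admissible $(l_1,l_2)$, substituting into the definition \eqref{eq:defPsi} of $\Psi_j$ gives a polynomial in at most three variables, constrained by one or two affine equalities; this is an optimization small enough that it can be solved exactly with a symbolic algebra system (Mathematica).

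First I would handle $\widecheck{M}_1$ using Proposition \ref{reductions2}: enumerate all feasible triples $(l_1,l_2,\mbox{case})$ with the side constraint that the number of zeros in each of $p,q$ is either $b-2$ or at most $b-j$, solve each resulting polynomial program symbolically, and take the maximum. The claimed optimizer $(1/b,\ldots,1/b;1/b,\ldots,1/b)$ arises from case 3 with $l_1=l_2=0$ and $\alpha=\beta=1/b$, $\delta=\eta=1/b$, which makes the symmetric balanced point a critical point of $\Psi_j$ on the fully symmetric slice. Next, for $\widecheck{M}_2$, Proposition \ref{prop:M2u} reduces the problem to a single family of configurations, and the stated value corresponds to $l_1=1$, $l_2=0$, which is known from earlier work to be the global maximizer of $\Psi_j$ on all of $\mathcal{P}_b\times\mathcal{P}_b$ for the $(b,k)$ pairs considered; one must verify that this global maximum does lie in the product domain $\widecheck{\mathcal{P}}_b^{0}\times \widecheck{\mathcal{P}}_b^{i}$ for the given $\epsilon$, i.e.\ that the $p$-distribution has max entry $\le 1-\epsilon$ (trivial since $1/b\le 1-\epsilon$) while $q$ has entry $1>1-\epsilon$. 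For $\widecheck{M}_3$, Proposition \ref{prop:M3u} forces $\gamma=\zeta=1-\epsilon$; enumerating the $(l_1,l_2)$ combinations and using the constraint $l_2\alpha+(b-l_1-l_2-1)\beta=\epsilon=l_1\delta+(b-l_1-l_2-1)\eta$, the symmetric choice $l_1=l_2=0$, $\beta=\eta=\epsilon/(b-1)$ yields the claimed value. For $\widecheck{M}_4$, Proposition \ref{prop:M4u} gives three subcases; the symmetric subcase with both $\gamma$ and $\zeta$ equal to $1-\epsilon$, and $\alpha=\eta=\epsilon/(b-2)$, gives the stated value, and one checks that the two other subcases (with an entry equal to $1$ on either side) yield strictly smaller values because the extra probability mass concentrated on a single coordinate kills many of the monomials in $\Psi_j$ that require $k-1$ distinct supports.

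The only substantive work is then verification: for each $(b,k)$ row and each $i$, every configuration allowed by the relevant proposition must be checked, and one must confirm that the listed point globally beats every other critical point of every other subcase. The main obstacle, beyond the bookkeeping, is that the number of combinations $(l_1,l_2,\text{case})$ grows with $b$, and for larger $b$ the Mathematica resultant or Lagrange-multiplier computations become heavy; this is precisely why the table is restricted to the listed small $(b,k)$ pairs. A minor subtlety is that the additional boundary configurations in Propositions \ref{prop:M4u} and \ref{reductions2} (with $\gamma$ or $\zeta$ equal to $1$ rather than $1-\epsilon$) must be checked separately and shown to be sub-optimal; since these correspond to degenerate distributions with fewer supports, the estimates are easy but must be written down to complete the exhaustion. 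Once every candidate is evaluated symbolically and the maximum is taken, the numerical values in the table follow directly.
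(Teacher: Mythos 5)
Your proposal matches the paper's approach: Propositions \ref{reductions2}, \ref{prop:M2u}, \ref{prop:M3u} and \ref{prop:M4u} reduce each $\widecheck{M}_i$ to a finite list of low-dimensional polynomial optimizations over a few integer multiplicities and at most three continuous parameters, which the authors then solve symbolically in Mathematica, and Proposition \ref{Mabove} is simply the recorded output of that computation. Two small slips worth flagging: for $\widecheck{M}_2$ the balanced partner is $\left(0,\tfrac{1}{b-1},\ldots,\tfrac{1}{b-1}\right)$, so the domain membership check is $\tfrac{1}{b-1}\le 1-\epsilon$ rather than $\tfrac{1}{b}\le 1-\epsilon$, and in your $\widecheck{M}_4$ discussion the parameter on the $q$-side in case 1 of Proposition \ref{prop:M4u} is called $\delta$, not $\eta$ — neither affects the validity of the argument.
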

\newpage
\begin{prop}\label{Mbelow}
For $j=3$, $(b,k)=(5,5)$ and $\epsilon = \frac{1}{44}(4+\sqrt{5})$,  the $\widehat{M}_i$'s are as shown in the following table
%$\widehat{M}_1$ is attained at $(\epsilon,\frac{1-\epsilon}{k-1}, \ldots, \frac{1-\epsilon}{k-1}; \gamma, \delta, \ldots, \delta) \text{ where } \delta \approx 0.185275$ with value $ \approx 0.384033$;\\
%$\widehat{M}_2$ is attained at $(0, \frac{1}{k-1}, \ldots, \frac{1}{k-1}; \gamma, \delta, \ldots, \delta) \text{ where } \delta = \epsilon$ with value $\approx 0.389226$;\\
%$\widehat{M}_3$ is attained at $(\epsilon, \frac{1-\epsilon}{k-2}, \ldots, \frac{1-\epsilon}{k-2}, 0; \epsilon, \alpha, \ldots, \alpha, \beta) \text{ where } \beta \approx 0.454183$ and it is upperbounded by $0.374759$;\\
%$\widehat{M}_4$ is attained at $(0, \frac{1}{k-1}, \ldots, \frac{1}{k-1}; \gamma, \delta, \ldots, \delta) \text{ where } \delta = \epsilon$ with value $\approx 0.389226$.
%
%For $k=6$ and $\epsilon = \frac{1}{20}$ we have that\\
%$\widehat{M}_1$ is attained at $(\frac{1}{k}, \ldots, \frac{1}{k}; \frac{1}{k}, \ldots, \frac{1}{k})$ with value equal to $0.185185\ldots$;\\
%$\widehat{M}_2$ is attained at $(\epsilon, \frac{1-\epsilon}{k-1}, \ldots, \frac{1-\epsilon}{k-1}; \gamma, \delta, \ldots, \delta) \text{ where } \delta \approx 0.147757$ with value $\approx 0.178857$;\\
%$\widehat{M}_3 $ is attained at $(\epsilon, 0, \frac{1-\epsilon}{k-2}, \ldots, \frac{1-\epsilon}{k-2}; 0, 1, 0, \ldots, 0)$ and it is upperbounded by $0.140664$;\\
%$\widehat{M}_4$ is attained at $(1, 0, \ldots, 0; 0, \frac{1}{k-1}, \ldots, \frac{1}{k-1})$ with value equal to $0.192$.
\begin{table}[ht!]
\def\arraystretch{1.5}
\begin{tabularx}{\linewidth}{cl@{\extracolsep{\fill}}l}
$\widehat{M}_i$ & Attained at point $(p;q)$ & Value $\approx$ \\
\hline
$\widehat{M}_1$ & $(\epsilon,\frac{1-\epsilon}{b-1}, \ldots, \frac{1-\epsilon}{b-1}; \gamma, \delta, \ldots, \delta), \delta \approx 0.185275$ & 0.384033\\

$\widehat{M}_2$ & $(0, \frac{1}{b-1}, \ldots, \frac{1}{b-1}; \gamma, \delta, \ldots, \delta), \delta = \epsilon$ & 0.389226\\

$\widehat{M}_3$ & $(\epsilon, \frac{1-\epsilon}{b-2}, \ldots, \frac{1-\epsilon}{b-2}, 0; \epsilon, \alpha, \ldots, \alpha, \beta), \beta \approx 0.454183$ & 0.374759\\

$\widehat{M}_4$ & $(0, \frac{1}{b-1}, \ldots, \frac{1}{b-1}; \gamma, \delta, \ldots, \delta), \delta = \epsilon$ & 0.389226\\
\hline
\end{tabularx}
\end{table}
\\
For $j=3$, $(b,k)=(6,5)$ and $\epsilon = \frac{1}{10}$,  the $\widehat{M}_i$'s are as shown in the following table
\begin{table}[ht!]
\def\arraystretch{1.5}
\begin{tabularx}{\linewidth}{cl@{\extracolsep{\fill}}l}
$\widehat{M}_i$ & Attained at point $(p;q)$ & Value $\approx$ \\
\hline
$\widehat{M}_1$& $(\epsilon,\frac{1-\epsilon}{b-1}, \ldots, \frac{1-\epsilon}{b-1}; \gamma, \delta, \ldots, \delta), \delta \approx 0.153159$ & 0.555625\\

$\widehat{M}_2$ & $(0, \frac{1}{b-1}, \ldots, \frac{1}{b-1}; \gamma, \delta, \ldots, \delta), \delta \approx 0.130217$ & 0.558467\\

$\widehat{M}_3$ & $(\epsilon, \frac{1-\epsilon}{b-2}, \ldots, \frac{1-\epsilon}{b-2}, 0; \epsilon, \alpha, \ldots, \alpha, \beta), \beta \approx 0.376930$  & 0.535106\\

$\widehat{M}_4$ & $(0, \frac{1}{b-1}, \ldots, \frac{1}{b-1}; \gamma, \delta, \ldots, \delta), \delta \approx 0.130217$ & 0.558467\\
\hline
\end{tabularx}
\end{table}
\\
For $j=4$, $(b,k)=(6,6)$ and $\epsilon = \frac{1}{20}$,  the $\widehat{M}_i$'s are as shown in the following table
\begin{table}[ht!]
\def\arraystretch{1.5}
\begin{tabular*}{\linewidth}{cl@{\extracolsep{\fill}}l}
$\widehat{M}_i$ & Attained at point $(p;q)$ & Value $\approx$ \\
\hline
$\widehat{M}_1$& $(\frac{1}{b}, \ldots, \frac{1}{b}; \frac{1}{b}, \ldots, \frac{1}{b})$ & 0.185185\\

$\widehat{M}_2$ & $(\epsilon, \frac{1-\epsilon}{b-1}, \ldots, \frac{1-\epsilon}{b-1}; \gamma, \delta, \ldots, \delta), \delta \approx 0.147757$ & 0.178857\\

$\widehat{M}_3$ & $(\epsilon, 0, \frac{1-\epsilon}{b-2}, \ldots, \frac{1-\epsilon}{b-2}; 0, 1, 0, \ldots, 0)$  & 0.140664\\

$\widehat{M}_4$ & $(1, 0, \ldots, 0; 0, \frac{1}{b-1}, \ldots, \frac{1}{b-1})$ & $0.192000$\\
\hline
\end{tabular*}
\end{table}
\\
\end{prop}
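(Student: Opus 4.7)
The plan is to invoke Propositions \ref{prop:M1l}--\ref{prop:M4l}, which for each $\widehat{M}_i$ reduce the supremum of $\Psi_j$ over the relevant subdomain to a finite list of parametric families, each depending on at most three continuous free variables together with a pair of integer parameters $(l_1,l_2)$ bounded by $b$ (see Remark \ref{rem:3vars}). Since the three cases at hand have $b\leq 6$, the list of admissible $(l_1,l_2)$ pairs is small and each sub-problem can be handled separately.

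For each $(b,k,j,\epsilon)$ I would first expand $\Psi_j$ as the explicit symmetric polynomial in $p_1,\ldots,p_b,q_1,\ldots,q_b$ prescribed by \eqref{eq:defPsi}. Then, for each configuration enumerated in Propositions \ref{prop:M1l}--\ref{prop:M4l} and each admissible $(l_1,l_2)$, I would substitute the parametric form of $(p;q)$, eliminate one variable via $\sum p_i=\sum q_i=1$, and obtain a polynomial $F$ of degree $j+1$ in at most three free parameters on a box constrained by the remaining non-negativity and $\epsilon$-boundary conditions. Interior stationary points would be found by the first-order conditions, while boundary faces where a free parameter saturates at $0$ or $\epsilon$ are recognized as lower-dimensional configurations already enumerated in the list. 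Taking the largest value over all configurations and all $(l_1,l_2)$ yields each $\widehat{M}_i$, together with an explicit maximizer which one verifies lies in $\widehat{\mathcal{P}}_b^0$ or $\widehat{\mathcal{P}}_b^1$ as appropriate.

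A few subtleties deserve attention. Proposition \ref{prop:M3l} provides only an upper bound on $\widehat{M}_3$, since its proof relaxes the minimality constraint on the first coordinate of $p$ and $q$; one must therefore check a posteriori that the computed argmax actually lies in $\widehat{\mathcal{P}}_b^1$, which the tabulated maximizers indeed satisfy. Several maximizers also have parameters attained at the boundary (e.g.\ $\delta=\epsilon$ in the $\widehat{M}_2$ rows), corresponding to transitions between configurations of Proposition \ref{prop:M2l}, so care is needed when comparing values across configurations. Finally, the specific choices of $\epsilon$ in the statement — $(4+\sqrt{5})/44$, $1/10$ and $1/20$ — are themselves the result of an outer optimization: they are chosen to minimize the resulting bound of Lemma \ref{Rbk_Mj_lemma} once $\mathbf{M}_j$ is replaced by the value $M$ of Lemma \ref{lemmaMax}, and locating this $\epsilon$ is part of the procedure.

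The main obstacle is pure computational volume rather than any conceptual difficulty: already for $(b,k)=(6,6)$ with $j=4$ the polynomial $\Psi_j$ expands to a substantial number of monomials per configuration, and the total number of $(l_1,l_2)$ configurations to inspect across the four $\widehat{M}_i$'s is in the dozens. All such computations would be carried out symbolically in Mathematica, and the reason the whole calculation is tractable is precisely the three-variable reduction supplied by Propositions \ref{prop:M1l}--\ref{prop:M4l} in combination with Lemmas \ref{lemma1}--\ref{lemma6}.
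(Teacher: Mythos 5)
Your proposal matches the paper's approach: the tables in Proposition~\ref{Mbelow} are obtained exactly as you describe, by enumerating the configurations of Propositions~\ref{prop:M1l}--\ref{prop:M4l} over the admissible $(l_1,l_2)$, reducing each to a polynomial in at most three free variables after using $\sum p_i=\sum q_i=1$, and solving the resulting small optimization problems symbolically in Mathematica, then tuning $\epsilon$ (and $j$) to optimize the resulting bound from Lemma~\ref{lemmaMax} and Lemma~\ref{Rbk_Mj_lemma}.

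One claim in your write-up is, however, incorrect and should be amended. You assert that for $\widehat{M}_3$ ``one must therefore check a posteriori that the computed argmax actually lies in $\widehat{\mathcal{P}}_b^1$, which the tabulated maximizers indeed satisfy.'' They do not. For instance, for $(b,k)=(5,5)$ the tabulated maximizer has $p=(\epsilon,\tfrac{1-\epsilon}{3},\tfrac{1-\epsilon}{3},\tfrac{1-\epsilon}{3},0)$, whose strict minimum sits in the last coordinate, so $p\in\widehat{\mathcal{P}}_5^5$, not $\widehat{\mathcal{P}}_5^1$; likewise the paired $q$ has its minimum in coordinate $2$. The paper does not attempt such a verification: Remark~\ref{rem:uniquemax} explicitly states that the reported $\widehat{M}_3$ figures are upper bounds, not values of $\widehat{M}_3$ itself. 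This is harmless because equation~\eqref{eq:QuadracBoundedbyMs} and Lemma~\ref{lemmaMax} only require upper bounds on each $M_i$, so overestimating $\widehat{M}_3$ (by relaxing the domain as in Proposition~\ref{prop:M3l}) merely weakens, and cannot invalidate, the final bound on $\mathbf{M}_j$. A fully correct account should say this rather than claim the relaxed maximizer lands back in $\widehat{\mathcal{P}}_b^1\times\widehat{\mathcal{P}}_b^1$; the same remark applies in principle to $\widehat{M}_4$, though there the paper notes that for the cases at hand the global maximizer happens to lie in the correct domain, so equality holds.
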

\begin{rem}
The values reported for $\widehat{M}_3$ are not approximate values of the exact values of $\widehat{M}_3$ but upper bounds. 
We point out that the value $\widehat{M}_1$ for $b=k=6$ is only attained for uniform distributions. This will be important for a qualitative analysis of our bound on $R_{(b,k)}$ for different values of $b$ and $k$, see Section \ref{sec:qualk6}.
\label{rem:uniquemax}
\end{rem}

As a consequence of Propositions \ref{Mabove}, \ref{Mbelow} and equation \eqref{eq:QuadracBoundedbyMs} we are able to evaluate the values of $M$ for both the partitions $\{\widecheck{P}_b^i\}_{i=0,\ldots,b}$ and $\{\widehat{P}_b^i\}_{i=0,\ldots,b}$. Then we state the following theorem
\begin{teo}\label{theorem:Msvalues}
	Using the partition $\{\widecheck{P}_b^i\}_{i=0,\ldots,b}$ we have
	
	\begin{table}[ht!]
		\def\arraystretch{1.5}
		\begin{tabular*}{\linewidth}{c@{\extracolsep{\fill}}c@{\extracolsep{\fill}}c@{\extracolsep{\fill}}c@{\extracolsep{\fill}}c@{\extracolsep{\fill}}c@{\extracolsep{\fill}}c@{\extracolsep{\fill}}c@{\extracolsep{\fill}}c@{\extracolsep{\fill}}c}
		$(b,k)$ & $(7,7)$ & $(8,8)$ & $(9,8)$ & $(10,9)$ & $(11,10)$ \\
		\hline
		$M$ & $\approx 0.0861594$ & $\approx 0.0388599$ & $\approx 0.0758830$ & $\approx 0.0363565$ & $\approx  0.0170049$\\
		\hline \\
		$(b,k)$ & $(12,10)$ & $(13,11)$ & $(14,12)$ & $(15,13)$ & \\
		\hline
		$M$ & $\approx 0.0309448$ & $\approx 0.0150674$ & $\approx 0.0071917$ & $\approx 0.0033733$\\
		\hline
		\end{tabular*}
	\end{table}

%	\begin{itemize}
%		\item for $(b,k) = (7,7)$ we have that $M \approx 0.0861594$;
%		\item for $(b,k) = (8,8)$ we have that $M \approx 0.0388599$;
%		\item for $(b,k) = (9,8)$ we have that $M \approx 0.0758830$.
%		\item for $(b,k) = (10,9)$ we have that $M \approx 0.0363565$.
%		\item for $(b,k) = (11,10)$ we have that $M \approx 0.0170049$.
%		\item for $(b,k) = (12,10)$ we have that $M \approx 0.0309448$.
%		\item for $(b,k) = (13,11)$ we have that $M \approx 0.0150674$.
%		\item for $(b,k) = (14,12)$ we have that $M \approx 0.0071917$.
%		\item for $(b,k) = (15,13)$ we have that $M \approx 0.0033733$.
%	\end{itemize}
	
	Using the partition $\{\widehat{P}_b^i\}_{i=0,\ldots,b}$ we have

	\begin{table}[ht!]
		\def\arraystretch{1.5}
		\begin{tabular*}{\linewidth}{c@{\extracolsep{\fill}}c@{\extracolsep{\fill}}c@{\extracolsep{\fill}}c}
		$(b,k)$ & $(5,5)$ & $(6,5)$ & $(6,6)$\\
		\hline
		$M$ & $\approx 0.3873676$ & $\approx 0.5567010$ & $\frac{5}{27} \approx 0.185185$\\
		\hline
		\end{tabular*}
	\end{table}		
	
%	\begin{itemize}
%		\item for $(b,k) = (5,5)$ we have that $M \approx 0.3873676$;
%		\item for $(b,k) = (6,5)$ we have that $M \approx 0.5567010$;
%		\item for $(b,k) = (6,6)$ we have that $M = \frac{5}{27} \approx 0.185185$.
%	\end{itemize}
\end{teo}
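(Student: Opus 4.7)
The plan is to invoke Lemma \ref{lemmaMax} row by row on both tables, substituting the explicit values of $\widecheck{M}_i$ and $\widehat{M}_i$ furnished by Propositions \ref{Mabove} and \ref{Mbelow}. The first check is the hypothesis $M_4>M_3$: in every row of both tables this is visibly satisfied, so the lemma legitimately reduces the problem to maximizing a univariate quadratic in $\eta_0$ over $[0,1]$, under the symmetric choice $\eta_1=\cdots=\eta_b=(1-\eta_0)/b$ that the lemma extracts.

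For the partition based on the maximum value (first table of the theorem), the values in Proposition \ref{Mabove} satisfy $\widecheck{M}_2 > \widecheck{M}_1,\widecheck{M}_3,\widecheck{M}_4$ strictly in every listed case, so the interior branch of Lemma \ref{lemmaMax} applies directly. Substituting the tabulated $\widecheck{M}_i$ into the closed-form expression for $\eta_0$ and then into $f(\eta)$ reproduces the entries displayed in the first table; this step is purely numerical arithmetic.

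For the partition based on the minimum value (second table), I would split into cases. In $(5,5)$ and $(6,5)$ the data of Proposition \ref{Mbelow} give $\widehat{M}_2=\widehat{M}_4>\widehat{M}_1>\widehat{M}_3$; revisiting the proof of Lemma \ref{lemmaMax} shows that what really matters for the interior critical point to be the unique maximizer is strict concavity of the univariate quadratic in $\eta_0$, equivalently $(b+1)M_2>bM_1+M_3$, which one verifies directly from the numbers. The same interior formula for $\eta_0$ then applies and yields the two reported values. The case $(6,6)$ falls in the boundary branch because $\widehat{M}_4>\widehat{M}_2$; here I would compute both $f(\eta_0=0)=\widehat{M}_3/b+(b-1)\widehat{M}_4/b$ and $f(\eta_0=1)=\widehat{M}_1=5/27$, and a short numerical comparison shows the latter to be the larger, giving $M=5/27$.

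The main delicate point, and the step requiring the most care, is the borderline situation $\widehat{M}_2=\widehat{M}_4$ encountered in $(5,5)$ and $(6,5)$: as literally stated, Lemma \ref{lemmaMax} only guarantees the interior formula under the strict inequality $M_2>M_4$. Once one observes that the truly necessary ingredient is strict concavity of the one-dimensional quadratic, which persists at the equality, the interior critical-point formula applies and the remaining work is bookkeeping: collect the $\widecheck{M}_i$ or $\widehat{M}_i$ from the previous propositions, plug them into Lemma \ref{lemmaMax}, and read off $M$.
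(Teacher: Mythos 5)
Your proposal is correct and follows essentially the same route as the paper, whose proof of Theorem \ref{theorem:Msvalues} consists precisely of feeding the values from Propositions \ref{Mabove} and \ref{Mbelow} into Lemma \ref{lemmaMax} via equation \eqref{eq:QuadracBoundedbyMs}. Your explicit handling of the borderline cases $\widehat{M}_2=\widehat{M}_4$ for $(5,5)$ and $(6,5)$ is a detail the paper glosses over and is genuinely needed (the literal ``otherwise'' branch of Lemma \ref{lemmaMax} would give the smaller endpoint value rather than the reported interior one); just note that besides strict concavity one must also confirm that the critical point lies in $[0,1]$, which the numbers do verify.
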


%As a consequence of Propositions \ref{Mabove}, \ref{Mbelow} and of Lemma \ref{lemmaMax} we are able to evaluate the values of $M$ for both the partitions $\{\widecheck{P}_k^i\}_{i=0,\ldots,k}$ and $\{\widehat{P}_k^i\}_{i=0,\ldots,k}$.
%\begin{teo}\label{theorem:Msvalues}
%	Using the partition $\{\widecheck{P}_k^i\}_{i=0,\ldots,k}$ we get
%	\begin{itemize}
%		\item for $k=7$ we have that $M \approx 0.0861594$;
%		\item for $k=8$ we have that $M \approx 0.0388599$.
%	\end{itemize}
%	Using the partition $\{\widehat{P}_k^i\}_{i=0,\ldots,k}$ we get
%	\begin{itemize}
%		\item for $k=5$ we have that $M \approx 0.3873676$;
%		\item for $k=6$ we have that $M=\frac{5}{27} \approx 0.185185$.
%	\end{itemize}
%\end{teo}

\begin{rem}\label{rem3}
For the underlined $(b,k)$-cases reported in Table \ref{tab:bkbounds}, it is interesting to note that the maximum in \eqref{eq:PsiMax} is only achieved for uniform distributions. This means that, for these particular cases, any new upper bounds that can be found on the quadratic form in equation \eqref{simmetrizzata2} cannot further improve those bounds. Note that, for $(b,4)$-cases when $b \geq 4$, the maximum of the quadratic form in \eqref{simmetrizzata2} is only achieved for uniform distributions if we suppose that the frequency of each symbol in all the coordinates of the code is less than or equal to $1/2$. For the special case $b=k=6$, the values we obtained for the $M_i$ constants are such that the resulting $\eta_0$ in the statement of Lemma \ref{lemmaMax} is equal to $1$. That is, the constant $M$ is actually $M_1$, which means that in our bound the worst-case scenario is given by the balanced subcodes. The resulting value $M_1=5/27$ is actually the value attained by $\Psi_j(p,q)$ for uniform $p$ and $q$. Roughly speaking, this should be interpreted as saying that our procedure is unable to give for $R_{(6,6)}$ a bound smaller than $5/59$ because such a rate might in principle be achieved if all subcodes have a uniform distribution on each coordinate. However, for such globally balanced codes, one can use a different argument based on the minimum distance of the code to get even stronger upper bounds on $R_{(b,k)}$. In the next section, we combine the two procedures to deduce a rigorous proof that indeed the bounds shown in Table \ref{tab:bkbounds} are not sharp for different values of $b$ and $k$.
\end{rem}

\section{A qualitative analysis on \texorpdfstring{$R_{(b,k)}$}{Lg}}
\label{sec:qualk6}
In this section we show that, at least for the underlined $(b,k)$-cases in Table \ref{tab:bkbounds} and for case $(b,k)=(6,6)$, the bound in equation \eqref{Rbk_Mj_bound}, for $j=k-2$ with $\mathbf{M}_{k-2}=\Psi_{k-2}(1/b, \ldots, 1/b;$ $1/b, \ldots, 1/b)$, is not sharp. We also show that, the bound given in equation \eqref{eq:dalaivenkatbound} is not sharp for every $(b,4)$-cases with $b \geq 5$ and $j=2$.
In this discussion, we use continuity arguments whose quantitative analysis would require long and complicated computations. For this reason, we do not provide explicit numerical improvements on $R_{(b,k)}$ and only show that the bounds on $R_{(b,k)}$ can be improved.

To prove our statement, we invoke an upper bound from \cite{Aaltonen} on the minimum hamming distance $d_H(C)$ of a $b$-ary code $C$ with a given rate $R$. It suffices here  to mention that, set $\delta:=d_H(C)/n$, this bound is of the form $\delta\leq F(R)$ for a suitable decreasing continuous function $F$.
Due to the monotonicity of $F$ there exists a maximum value of $R$ for which the inequality $R \leq \frac{(b-2)!}{(b-k+1)! b^{k-3}} F(R)$ is satisfied. \\
Using Mathematica on the specific bound in \cite{Aaltonen}, one finds that
\begin{align*}
R\leq \frac{(b-2)^{\underline{k-3}}}{b^{k-3}} F(R) \implies R < U_{(b,k)}
\end{align*}
where $(b-2)^{\underline{k-3}} = (b-2) \cdots (b-k+2)$ and $U_{(b,k)}$ takes the values shown in Table \ref{tab:ubk} for some $(b,k)$ pairs. Note that most of these pairs are actually those underlined in Table \ref{tab:bkbounds}.
\begin{table}[ht!]
\def\arraystretch{1.5}
\caption{$U_{(b,k)}$ values}
\begin{tabularx}{\linewidth}{c@{\extracolsep{\fill}}c@{\extracolsep{\fill}}c@{\extracolsep{\fill}}c@{\extracolsep{\fill}}c}
\hline
$U_{(6,6)} = 0.08469$ & $U_{(7,6)} = 0.13440$ & $U_{(8,6)} = 0.18125$ & $U_{(9,6)} = 0.22405$ \\ $U_{(10,6)} = 0.26268$ & $U_{(11,6)} = 0.29744$ & $U_{(12,6)} = 0.32874$ & $U_{(13, 6)} = 0.35699$ \\ $U_{(14,6)} = 0.38258$ & $U_{(8,7)} = 0.07200$ & $U_{(9,7)} = 0.10510$ &$U_{(10,7)} = 0.13822$ \\ $U_{(11,7)} = 0.17025$ & $U_{(12,7)} = 0.20068$ & $U_{(13,7)} = 0.22930$ & $U_{(14,7)} = 0.25609$ \\ $U_{(10, 8)} = 0.05749$ & $U_{(11, 8)} = 0.08043$ & $U_{(12, 8)} = 0.10419$ & $U_{(13, 8)} = 0.12808$ \\ $U_{(14, 8)} = 0.15163$ &  $U_{(11, 9)} = 0.03006$ & $U_{(12, 9)} = 0.04465$ & $U_{(13, 9)} =0.06081$ \\ $U_{(14, 9)} = 0.07799$ & $U_{(13, 10)} = 0.02386$ & $U_{(14, 10)} = 0.03412$ & $U_{(14, 11)} = 0.01236$ \\
\hline
%$U_{(15,6)} = 0.40584$, $U_{(15,7)} = 0.28110$, $U_{(15, 8)} = 0.17455$, $U_{(15, 9)} = 0.09574$, $U_{(15, 10)} = 0.04563$, $U_{(15, 11)} = 0.01852$, $U_{(15, 12)} = 0.00624$
\end{tabularx}
\label{tab:ubk}
\end{table}

%\begin{cases}  0.08469, & \text{when }  (b,k) = (6,6)  \\ 0.13440, & \text{when }  (b,k) = (7,6) \\ 0.18125, & \text{when }  (b,k) = (8,6) \\ 0.07200, & \text{when }  (b,k) = (8,7) \\ 0.10510, & \text{when }  (b,k) = (9,7) \\ 0.05749, & \text{when } (b,k) = (10, 8)  \end{cases}.
Because of the continuity of $F$, this implies that there exist $\epsilon_1>0$ and $\epsilon_2>0$ such that
\begin{align*}
R \leq \left(\frac{(b-2)^{\underline{k-3}}}{b^{k-3}}+\epsilon_1\right) F(R)+\epsilon_2 \implies R < U_{(b,k)} + 10^{-5}
\end{align*} 

We note that, if $p_1=p_2=\dots=p_b=1/b$, given $i\not=h\in [1,b]$ and chosen at random $x_1, \ldots, x_{k-4}, z \in [1,b]$ according to the distribution $p$, the probability that $i,h,x_1, \ldots, x_{k-4}$, $z$ are all different is $(b-2)^{\underline{k-3}}/b^{k-3}$. 
Therefore, by continuity, there exists $\epsilon_3>0$ such that given $i\not=h\in [1,b]$ and chosen at random $x_1, \ldots, x_{k-4}, z \in [1,b]$ according to the distribution $p'$ where $p'_1,p'_2,\dots,p'_b\in [1/b-\epsilon_3,1/b+\epsilon_3]$, the probability that $i,h,x_1, \ldots, x_{k-4}, z$ are all different is less than $(b-2)^{\underline{k-3}}/b^{k-3}+\epsilon_1$. Now we divide the coordinates $i\in [1,n]$ into two sets according to whether the distribution $f_i$ has all its values in $[1/b-\epsilon_3,1/b+\epsilon_3]$ or not. More precisely, we define:
\begin{align*}U_{\epsilon_3}:=\{i\in [1,n]: f_{i,h} \in [1/b-\epsilon_3,1/b+\epsilon_3], \ \forall h\in[1,b] \}.\end{align*}
We can assume, up to reordering the coordinates, that $U_{\epsilon_3}=[1,t]$ for some value $t$. Here we divide the discussion into two cases, according to the size of $t$, and we show that in both cases a better bound on $R_{(b,k)}$ can be obtained.
\subsubsection*{A) Let us assume that $t<n(1-\epsilon_2)$} As a consequence of Hansel's Lemma, we have the following 
\begin{align*}\log(|C|)&\leq \left(1+o(1)\right)\frac{1}{2}\sum_{i\in [\ell+1,n]}\sum_{\omega,\mu\in \Omega}\lambda_{\omega}\lambda_{\mu}\Psi_{k-2}(f_{i|\omega},f_{i|\mu}) \\ &\leq  \left(1+o(1)\right)\frac{1}{2}\left[ \sum_{i\in [\ell+1,t]}M+\sum_{i\in [t+1,n]}\sum_{\omega,\mu\in \Omega}\lambda_{\omega}\lambda_{\mu}\Psi_{k-2}(f_{i|\omega},f_{i|\mu})\right]\end{align*}
where $M$ is the upperbound of equation \eqref{eq:QuadracBoundedbyMs} given in Theorem \ref{theorem:Msvalues}.
Due to the following lemma, we are able to provide a better upper bound to the second term of the sum. 
\begin{lemma}
Assume that $f_{i,h}\not \in [1/b-\epsilon_3,1/b+\epsilon_3]$ for some $h\in [1,b]$. Then there exists $M'<M$ such that:
$$
\sum_{\omega,\mu\in \Omega}\lambda_{\omega}\lambda_{\mu}\Psi_{k-2}(f_{i|\omega},f_{i|\mu})\leq M'.
$$
\end{lemma}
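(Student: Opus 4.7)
The plan is to exploit the uniqueness of the maximizer of $\Psi_{k-2}$ asserted in Remarks \ref{rem:uniquemax} and \ref{rem3}, combined with a compactness-and-continuity argument. For the underlined $(b,k)$-cases in Table \ref{tab:bkbounds} and for $(b,k)=(6,6)$, the constant $M$ given by Theorem \ref{theorem:Msvalues} coincides with the global maximum of $\Psi_{k-2}$, which by those remarks is attained only at the pair of uniform distributions $(1/b,\ldots,1/b;1/b,\ldots,1/b)$.

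I would first observe that the quantity
\[
S(\lambda) := \sum_{\omega,\mu\in\Omega}\lambda_{\omega}\lambda_{\mu}\Psi_{k-2}(f_{i|\omega},f_{i|\mu})
\]
is a convex combination, with weights $\lambda_{\omega}\lambda_{\mu}$, of values of $\Psi_{k-2}$ each of which is at most $M$. Consequently $S(\lambda)\leq M$, with equality forcing $\Psi_{k-2}(f_{i|\omega},f_{i|\mu})=M$ for every pair $(\omega,\mu)$ in the support of $\lambda\otimes\lambda$. By the cited uniqueness statement this in turn forces $f_{i|\omega}=(1/b,\ldots,1/b)$ for every $\omega$ in the support of $\lambda$, so that the average $f_i=\sum_{\omega}\lambda_{\omega}f_{i|\omega}$ is itself uniform.

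To promote this qualitative statement to a strict, uniform gap, I would use a standard compactness argument. Since $\Psi_{k-2}$ is a polynomial of fixed degree in the vectors $p,q\in\mathbb{R}^b$, the value $S(\lambda)$ depends on $\lambda$ only through finitely many mixed moments of the pushforward of $\lambda$ onto the simplex; Caratheodory therefore allows us to restrict to distributions $\lambda$ supported on a bounded number of atoms. The set of configurations $(\lambda,\{f_{i|\omega}\}_\omega)$ satisfying the additional constraint $|f_{i,h}-1/b|\geq\epsilon_3$ for some $h\in[1,b]$ is then a compact subset of a finite-dimensional Euclidean space, and $S$ is continuous on it. The supremum $M'$ is therefore attained, and if we had $M'=M$ the previous paragraph would force $f_i$ to be uniform, contradicting the hypothesis. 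Hence $M'<M$, as claimed.

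The only delicate ingredient is making the uniqueness statement of Remarks \ref{rem:uniquemax} and \ref{rem3} fully rigorous: one needs to combine the case analysis of Propositions \ref{reductions2}--\ref{prop:M4l} with the numerical evaluations of Propositions \ref{Mabove} and \ref{Mbelow} to rule out non-uniform maximizers inside each of the subdomains. Once this is granted, the compactness-and-continuity step is routine and the dependence of $M'$ on $\epsilon_3$ (but not on $\lambda$ or on the particular $f_i$) follows automatically.
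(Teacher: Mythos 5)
Your route is genuinely different from the paper's. The paper does not run a global compactness argument over the space of configurations $(\lambda,\{f_{i|\omega}\})$; instead it fixes the offending coordinate $h$ (say $f_{i,h}<1/b-\epsilon_3$), sets $\Omega'=\{\omega: f_{i,h|\omega}\geq 1/b-\epsilon_3/2\}$, and uses the hypothesis on the \emph{average} $f_{i,h}=\sum_\omega\lambda_\omega f_{i,h|\omega}$ to deduce $\sum_{\omega\in\Omega'}\lambda_\omega\leq \frac{1/b-\epsilon_3}{1/b-\epsilon_3/2}<1$. It then bounds $\Psi_{k-2}(f_{i|\omega},f_{i|\mu})$ by a constant $M_{\epsilon_3}<M$ on every pair involving some $\omega\notin\Omega'$ (this is where uniqueness of the uniform maximizer, plus an implicit continuity/compactness step of the same flavour as yours, enters), and obtains $M'$ as an explicit convex combination of $M$ and $M_{\epsilon_3}$. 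Your Carath\'eodory-plus-compactness reduction is a legitimate alternative for the underlined cases of Table \ref{tab:bkbounds}, where by Remarks \ref{rem:ratesunderlined} and \ref{rem3} the constant $M$ really is the unconstrained maximum of $\Psi_{k-2}$ and is attained only at the uniform pair; there your term-by-term bound, the equality analysis, and the attainment of the supremum all go through.

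However, there is a genuine gap for $(b,k)=(6,6)$, a case to which the lemma is applied (it is singled out in Remark \ref{rem3} and appears in Table \ref{tab:ubk}). Your opening claim --- that $S(\lambda)$ is a convex combination of values of $\Psi_{k-2}$ each at most $M$ --- is false there: $M=\widehat{M}_1=5/27=\Psi_4(1/6,\ldots,1/6;1/6,\ldots,1/6)$ is \emph{not} a pointwise upper bound on $\Psi_4$, since the pair $p=(1,0,\ldots,0)$, $q=(0,\tfrac15,\ldots,\tfrac15)$ gives $\Psi_4(p;q)=0.192>5/27$ (this is exactly the value $\widehat{M}_4$ in Proposition \ref{Mbelow}, attained by admissible distributions). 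For $(6,6)$ the inequality $S(\lambda)\leq M$ holds only via the clustering machinery of Section \ref{ClusterBound} (equation \eqref{eq:QuadracBoundedbyMs} together with Lemma \ref{lemmaMax}, where the maximum sits at $\eta_0=1$), so equality in $S(\lambda)=M$ does not force $\Psi_{k-2}(f_{i|\omega},f_{i|\mu})=M$ on every pair in the support, and your contradiction does not follow. To repair this you must trace the equality/gap analysis through the cluster decomposition (showing that near-equality forces nearly all $\lambda$-mass onto the balanced class and then invoking the uniqueness of the maximizer of $\widehat{M}_1$ from Remark \ref{rem:uniquemax}), or adopt the paper's mass-splitting argument, which sidesteps the issue by working directly with the constraint on the average in coordinate $h$.
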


\begin{proof}
Consider first for simplicity the case when $f_{i,h}<1/b-\epsilon_3$.  Let $\Omega'\subseteq \Omega$ be the subset of the $\omega$ for which $f_{i,h|\omega}\geq 1/b-\epsilon_3/2$. Then, since
\begin{align*}
f_{i,h}  = \sum_{\omega\in \Omega} \lambda_{\omega}f_{i,h|\omega}
 \geq  \sum_{\omega\in \Omega'} \lambda_{\omega}f_{i,h|\omega}
 \geq  ( 1/b-\epsilon_3/2)\sum_{\omega\in\Omega'} \lambda_{\omega}\,,
\end{align*}
we deduce that 
$$
\sum_{\omega\in\Omega'} \lambda_{\omega} \leq \frac{1/b-\epsilon_3}{1/b-\epsilon_3/2}\,.
$$
From Remarks \ref{rem:ratesunderlined}, \ref{rem:uniquemax} and \ref{rem3}, we know that the maximum of the quadratic form in \eqref{simmetrizzata2} is only achieved for uniform distributions. This means that $M=\Psi_{k-2}(1/b, \ldots, 1/b; 1/b, \ldots, 1/b)$ for the $(b,k)$-cases under consideration. Therefore there is some constant $M_{\epsilon_3}<M$  such that if either $f_{i,h|\omega}$ or $f_{i,h|\mu}$ are not $\Omega'$, then $\Psi_{k-2}(f_{i|\omega},f_{i|\mu})\leq M_{\epsilon_3}$.
This implies
\begin{equation*}
\sum_{\omega,\mu\in \Omega}\lambda_{\omega}\lambda_{\mu}\Psi_{k-2}(f_{i|\omega},f_{i|\mu})\leq \left( \frac{1/b-\epsilon_3}{1/b-\epsilon_3/2}\right)^2 M+\left(1- \left( \frac{1/b-\epsilon_3}{1/b-\epsilon_3/2}\right)^2\right) M_{\epsilon_3}
\end{equation*}
and hence the statement of the lemma for the case $f_{i,h}<1/b-\epsilon_3$. A similar proof holds for $f_{i,h}>1/b+\epsilon_3$.
\end{proof}
In the case $\ell\geq t$, we immediately obtain that $\log(|C|)\leq (n-\ell)\frac{M'}{2}$ which leads to the upperbound $R<\frac{M'}{2+M'}$ that is better than the one shown in Table \ref{tab:bkbounds}. So we can assume $\ell<t$ and therefore:
\begin{align*}\log(|C|)\leq (t-\ell)\frac{M}{2}+(n-t)\frac{M'}{2}\leq (n-n\epsilon_2-\ell)\frac{M}{2}+n\epsilon_2\frac{M'}{2}.\end{align*}
Since $\ell=\left\lfloor{\frac{nR-2\log{n}}{\log\left(2+\bar{\epsilon}\right)}}\right\rfloor=\left\lfloor{nR-2\log{n}}\right\rfloor(1+o(1))$, dividing by $n$ we get:
\begin{align*}R\leq \frac{1}{2}\left[\frac{M(1-\epsilon_2-R+2\frac{\log{n}}{n})}{1-R+2\frac{\log{n}}{n}}+\frac{M'\epsilon_2}{1-R+2\frac{\log{n}}{n}}\right](1-R+2\frac{\log{n}}{n})(1+o(1)).\end{align*}
Set $M''=\frac{M(1-\epsilon_2-R)}{1-R}+\frac{M'\epsilon_2}{1-R}$ we have that $M''<M$ and, taking $n\to \infty$, we obtain:
$$R\leq \frac{M''}{2}(1-R)(1+o(1)).$$
It means that $R<\frac{M''}{2+M''}$ and since $M''<M$, it follows that the bound is not sharp under the assumption of the case $A$.
\subsubsection*{B) Let us assume that $t\geq n(1-\epsilon_2)$} Let us fix two words $u,u'\in C$ at minimum hamming distance and let us choose at random $x,y$. Because of Hansel Lemma we have that:
\begin{align*}\log(|C|)\leq\sum_{i=1}^n \mathbb{E}[\tau(G_i^{u,u',x_1, \ldots, x_{k-4}})].\end{align*}
We recall that $0\leq \tau(G_i^{u,u',x_1,\ldots, x_{k-4}})\leq 1$ and, if $u_i\not=u'_i$, $\tau(G_i^{u,u',x_1, \ldots, x_{k-4}})$ is the probability that given $z\not \in \{u,u',x_1, \ldots,x_{k-4}\}$ we have that $u_i,u'_i,x_{1i},x_{(k-4)i},z_i$ are all different. Since we have chosen at random also $x_1, \ldots, x_{k-4}$, $\mathbb{E}[\tau(G_i^{u,u',x_1, \ldots, x_{k-4}})]$ 
coincides with the probability that given $x_1, \ldots, x_{k-4}, z\not \in \{u,u'\}$ we have that $u_i,u'_i,x_{1i},x_{(k-4)i},z_i$ are all different. Therefore $\mathbb{E}[\tau(G_i^{u,u',x_1, \ldots,x_{k-4}})]\leq (b-2)^{\underline{k-3}}/b^{k-3}+\epsilon_1$ for any $i\in [1,t]$ when $u_i\not=u'_i$, otherwise if $u_i=u'_i$ then the expected value is $0$. 
This means that
\begin{align*}
\log(|C|)&\leq \left(\frac{(b-2)^{\underline{k-3}}}{b^{k-3}}+\epsilon_1\right)\sum_{i=1}^t 1_{u_i\not=u'_i}+\sum_{i=t+1}^n 1 \\
&\leq  \left(\frac{(b-2)^{\underline{k-3}}}{b^{k-3}}+\epsilon_1\right)\sum_{i=1}^n 1_{u_i\not=u'_i}+\sum_{i=n(1-\epsilon_2)}^n 1
\end{align*}
and hence
\begin{align*}\log(|C|)\leq \left(\frac{(b-2)^{\underline{k-3}}}{b^{k-3}}+\epsilon_1\right)d_H(u,u')+n\epsilon_2.\end{align*}
Dividing by $n$ and remembering that $u,u'$ are at minimal hamming distance, we obtain that:
\begin{align*}R\leq \left(\frac{(b-2)^{\underline{k-3}}}{b^{k-3}}+\epsilon_1\right)\delta+\epsilon_2\leq \left(\frac{(b-2)^{\underline{k-3}}}{b^{k-3}}+\epsilon_1\right)F(R)+\epsilon_2.\end{align*}
But, because of the definition of $\epsilon_1$ and $\epsilon_2$, this implies that $R < U_{(b,k)} + 10^{-5}$. It can be easily checked that the bound in Theorem \ref{NotSharp} is strictly greater than $U_{(b,k)} + 10^{-5}$ for every $(b,k)$-cases under consideration and therefore:
\begin{teo}\label{NotSharp}
	%&\quad R_{(5,4)} < 0.573029, \qquad R_{(6,4)} < 0.770900, \qquad R_{(7,4)} < 0.943712, \\
	\begin{align*}
		R_{(b, k)} < \left(\frac{1}{\log\frac{b}{k-3}} + \frac{b^{k-1}}{b^{\underline{k-1}} \log (b-k+2)} \right)^{-1}
	\end{align*}
	for the $(b,k)$-cases shown in Table \ref{tab:ubk}.
\end{teo}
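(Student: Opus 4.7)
The plan is to combine the Hansel-type machinery underlying Lemma \ref{Rbk_Mj_lemma} with the Aaltonen-type minimum-distance upper bound $d_H(C)/n\leq F(R)$ from \cite{Aaltonen}, leveraging the uniqueness of the uniform maximizer of $\Psi_{k-2}$ noted in Remarks \ref{rem:uniquemax} and \ref{rem3}. The conjectured expression with $j=k-2$ would be attained only if the coordinate-wise quadratic form were always saturated by the uniform distribution; I exploit the fact that the uniform configuration forces a minimum-distance regime that the Aaltonen bound rules out. First I choose small positive constants $\epsilon_1,\epsilon_2,\epsilon_3$ so that, by continuity of $F$, any code satisfying $R\leq((b-2)^{\underline{k-3}}/b^{k-3}+\epsilon_1)F(R)+\epsilon_2$ obeys $R<U_{(b,k)}+10^{-5}$, and so that any distribution with all entries in $[1/b-\epsilon_3,1/b+\epsilon_3]$ assigns probability at most $(b-2)^{\underline{k-3}}/b^{k-3}+\epsilon_1$ to the event that $k-1$ i.i.d.\ samples together with two fixed distinct indices are all different.

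Given a $(b,k)$-hash code $C$ of rate $R$, I partition its coordinates into the near-uniform set
\[
U_{\epsilon_3}=\{i\in[1,n]:f_{i,h}\in[1/b-\epsilon_3,1/b+\epsilon_3]\text{ for all }h\in[1,b]\}
\]
and its complement, and set $t=|U_{\epsilon_3}|$. The whole argument rests on the dichotomy $t<n(1-\epsilon_2)$ versus $t\geq n(1-\epsilon_2)$.

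In the first case I follow the proof of Lemma \ref{Rbk_Mj_lemma} but refine the quadratic-form bound coordinate by coordinate: on coordinates in $U_{\epsilon_3}$ the bound $M$ is used as is, while on the $n-t$ coordinates outside $U_{\epsilon_3}$ I combine Remark \ref{rem:uniquemax} with a pigeonhole estimate on the mass of subcodes $C_\omega$ with $f_{i,h|\omega}$ inside the uniform window to extract a uniform constant $M_{\epsilon_3}<M$. Averaging over both kinds of coordinates yields an effective value $M''<M$, which gives $R\leq (M''/2)(1-R)$, hence $R<M''/(2+M'')$. In the second case I fix two codewords $u,u'$ of $C$ at minimum Hamming distance and apply Hansel's lemma (Lemma \ref{hansel}) with these frozen together with $k-4$ codewords $x_1,\ldots,x_{k-4}$ chosen uniformly at random in $C\setminus\{u,u'\}$: coordinates with $u_i=u'_i$ contribute zero, coordinates in $U_{\epsilon_3}$ with $u_i\neq u'_i$ contribute at most $(b-2)^{\underline{k-3}}/b^{k-3}+\epsilon_1$ in expectation, and the at most $n\epsilon_2$ coordinates outside $U_{\epsilon_3}$ are bounded trivially by $1$. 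Dividing by $n$ and invoking the Aaltonen bound $d_H(u,u')/n\leq F(R)$ produces $R<U_{(b,k)}+10^{-5}$.

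The main obstacle is the first case: one needs a genuinely strict, quantitative version of the uniqueness in Remark \ref{rem:uniquemax}, so that the deviation of some $f_{i,h}$ from $1/b$ by at least $\epsilon_3$ forces a \emph{uniform} deficit $M-M_{\epsilon_3}>0$ in the symmetrized quadratic form through every pair $(\omega,\mu)$ whose subcodes do not both lie in the near-uniform window. The concluding step is a finite numerical check on the pairs in Table \ref{tab:ubk} verifying that both $M''/(2+M'')$ (Case A) and $U_{(b,k)}+10^{-5}$ (Case B) lie strictly below the right-hand side of the theorem; this is routine but essential, since the improvement over the conjectured bound is only quantified through these explicit constants.
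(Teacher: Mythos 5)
Your proposal follows the paper's own proof essentially step for step: the same choice of $\epsilon_1,\epsilon_2,\epsilon_3$ via continuity of the Aaltonen bound, the same dichotomy on $t=|U_{\epsilon_3}|$, the same refined quadratic-form bound with a strict deficit $M_{\epsilon_3}<M$ extracted from the uniqueness of the uniform maximizer in Case A, and the same minimum-distance/Hansel argument with two frozen codewords and $k-4$ random ones in Case B, concluded by the finite check against Table \ref{tab:ubk}. The only blemish is a slip where you describe the near-uniform probability estimate as involving $k-1$ i.i.d.\ samples rather than $k-3$ (namely $x_1,\ldots,x_{k-4}$ and $z$), which your later, correct use of the estimate makes clear is just a typo.
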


For cases $(b, 4)$ when $b \geq 4$ we know thanks to \cite{DalaiVenkatJaikumar} that the maximum of \eqref{eq:exptaugf}, under the constraint that $f_{i,a} \leq \frac{1}{2}$ for every $i=1,\ldots, n$ and every $a=1, \ldots, b$, is only achieved for uniform distributions. Therefore we can use the Plotkin bound instead of the Aaltonen bound in order to prove that bound \eqref{eq:dalaivenkatbound} is not sharp when $k=4$ and $b \geq 5$. 

Let $C$ be a $(b,4)$-hash code with rate $R$ and suppose that the frequency of the symbols in all the coordinates of $C$ is uniform. Then by Hansel we get
\begin{equation}\label{eq:b4hansel}
	R \leq \frac{b-2}{b} \cdot \delta,
\end{equation}
where $\delta = d_H(C) / n$. The Plotkin bound for $q$-ary codes with $\delta \leq (b-1)/b$ is the following
\begin{equation}\label{eq:b4plotkin}
	R \leq \log b \left(1- \delta \cdot \frac{b}{b-1} \right).
\end{equation}
Since equation \eqref{eq:b4hansel} is increasing in $\delta$ while \eqref{eq:b4plotkin} is decreasing then we can combine the two bounds to get
\begin{equation}\label{eq:plotkin}
	R \leq \frac{b(b-1) \log b}{(b-1)(b-2) + b^2 \log(b)}.
\end{equation}
It is easy to see that the bound given in \eqref{eq:dalaivenkatbound} for $k=4$ is always strictly greater than \eqref{eq:plotkin} for every $b > 4$. Then, by a continuity argument (as done previously) one can show that bound \eqref{eq:dalaivenkatbound} for $k=4$ is not sharp for every $b \geq 5$. Therefore we have the following theorem.
\begin{teo}\label{NotSharp4}
For every integer $b > 4$
$$
	R_{(b,4)}  < \left(\frac{1}{\log b} + \frac{b^2}{(b^2-3b+2) \log (b-2)}  \right)^{-1}.
$$
\end{teo}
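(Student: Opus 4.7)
The plan is to replay the case-split continuity argument used for Theorem \ref{NotSharp}, but replacing the Aaltonen bound with the Plotkin bound, which is available precisely because of a uniqueness statement coming from \cite{DalaiVenkatJaikumar}. Concretely, for $k=4$ and $j=2$, the quadratic form in \eqref{eq:exptaugf} (under the hypothesis $f_{i,a}\le 1/2$ for all $i,a$, which we may assume by the truncation procedure in the proof of Lemma \ref{dalaivenkat_lemma}) is maximized \emph{only} by the uniform distribution $p=(1/b,\ldots,1/b)$. Hence there exists $\epsilon_3>0$ such that whenever some $f_{i,h}\notin[1/b-\epsilon_3,1/b+\epsilon_3]$ the contribution of that coordinate to the Hansel sum is strictly less than the uniform value.

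First I would fix arbitrary $\epsilon_1,\epsilon_2,\epsilon_3>0$, define the set $U_{\epsilon_3}$ of coordinates with all frequencies in $[1/b-\epsilon_3,1/b+\epsilon_3]$, and let $t=|U_{\epsilon_3}|$. In \emph{Case A}, $t<n(1-\epsilon_2)$: the same argument as in Section~\ref{sec:qualk6} (the lemma proved there, with $M_{k-2}$ replaced by the value of \eqref{eq:exptaugf} at the uniform distribution, i.e.\ $M=(b^2-3b+2)/b^2$) yields a bound of the form $R\le M''/(2+M'')$ with $M''<M$, which is strictly smaller than the right-hand side of \eqref{eq:dalaivenkatbound}. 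In \emph{Case B}, $t\ge n(1-\epsilon_2)$: pick two codewords $u,u'$ at minimum Hamming distance and apply Hansel's lemma with the $2$-element prefix $(u,u')$; on the nearly uniform coordinates $i\in U_{\epsilon_3}$ with $u_i\neq u_i'$ the expected probability that $u_i,u_i',z_i$ are all distinct is at most $(b-2)/b+\epsilon_1$, while on the $\le n\epsilon_2$ coordinates outside $U_{\epsilon_3}$ we use the trivial bound $1$. This yields $R\le \bigl((b-2)/b+\epsilon_1\bigr)\delta+\epsilon_2$, i.e.\ the Hansel inequality \eqref{eq:b4hansel} up to arbitrarily small perturbation, where $\delta=d_H(C)/n$.

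Next I would combine this with the Plotkin bound \eqref{eq:b4plotkin}, valid whenever $\delta\le (b-1)/b$: since the Hansel bound is increasing in $\delta$ and Plotkin is decreasing, the worst case occurs at the crossing point, producing the explicit combined bound \eqref{eq:plotkin}. A short calculation (to be done symbolically) shows that for every integer $b\ge 5$,
\[
\frac{b(b-1)\log b}{(b-1)(b-2)+b^2\log b}\;<\;\left(\frac{1}{\log b}+\frac{b^2}{(b^2-3b+2)\log(b-2)}\right)^{-1},
\]
so Case~B also produces a rate strictly below \eqref{eq:dalaivenkatbound} once $\epsilon_1,\epsilon_2$ are chosen small enough, by continuity of the combined bound in $\epsilon_1,\epsilon_2$. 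Taking the maximum of the two Case~A and Case~B bounds and shrinking $\epsilon_1,\epsilon_2,\epsilon_3$ concludes that $R_{(b,4)}$ is strictly smaller than \eqref{eq:dalaivenkatbound}.

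The main obstacle is the first step: one must verify rigorously that the uniqueness of the uniform maximizer of \eqref{eq:exptaugf} (the ingredient from \cite{DalaiVenkatJaikumar}) together with the truncation step of Lemma \ref{dalaivenkat_lemma} survive the passage to the symmetrized expression in \eqref{simmetrizzata2}, so that a coordinate with any $f_{i,h}$ bounded away from $1/b$ produces a genuine deficit in the quadratic form uniformly over admissible $\lambda$. Once that uniform deficit is in place, the rest is an essentially formal repetition of the Case A/Case B split of Theorem \ref{NotSharp}, combined with the elementary inequality between \eqref{eq:plotkin} and \eqref{eq:dalaivenkatbound}, the latter being a finite symbolic check.
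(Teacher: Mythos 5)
Your proposal is essentially a reconstruction of the paper's own argument for Theorem~\ref{NotSharp4}, including the Case~A / Case~B split, the appeal to the uniqueness of the uniform maximizer, and the Hansel/Plotkin combination. However, both your write-up and the paper have a genuine problem in Case~B. First, a numerical sanity check: with \eqref{eq:plotkin} as printed (numerator $b(b-1)\log b$), the claimed inequality between \eqref{eq:plotkin} and \eqref{eq:dalaivenkatbound} actually fails at $b=5$, where \eqref{eq:plotkin} evaluates to about $0.663$ while the right-hand side of \eqref{eq:dalaivenkatbound} is about $0.573$; tracing the crossing computation, the numerator of \eqref{eq:plotkin} should read $(b-1)(b-2)\log b$. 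Second, and more importantly, the Hansel bound \eqref{eq:b4hansel} omits a factor $\log(b-2)$: applying Lemma~\ref{hansel} with $c=b-2$, $d=2$, $r=|C|-2$ and using $\tau(G_i^{u,u'})\approx\frac{b-2}{b}$ on the coordinates where $u_i\ne u_i'$ gives $R \le \frac{(b-2)\log(b-2)}{b}\,\delta$, exactly as in the derivation in the proof of Lemma~\ref{Rbk_Mj_lemma}.

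With the corrected Hansel bound, combining with Plotkin's $\delta \le \frac{b-1}{b}\bigl(1-\frac{R}{\log b}\bigr)$ yields $\frac{1}{R}\ge \frac{b^2}{(b-1)(b-2)\log(b-2)}+\frac{1}{\log b}$, which coincides \emph{exactly} with the right-hand side of \eqref{eq:dalaivenkatbound}. So for uniform-frequency codes, the Plotkin constraint carries precisely the same information as the prefix/averaging step behind Lemma~\ref{dalaivenkat_lemma}, and Case~B yields only equality, not a strict improvement; the continuity argument cannot manufacture strictness out of an identity. To close the argument one needs a rate/distance bound that is strictly stronger than Plotkin at the relevant $\delta$ --- for instance Elias--Bassalygo, or the Aaltonen bound the paper already uses for $k\ge 5$ in Theorem~\ref{NotSharp}. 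You should flag this: as written, the final comparison between \eqref{eq:plotkin} and \eqref{eq:dalaivenkatbound} is not a sound step, and Case~B as it stands does not establish the claimed strict inequality.
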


\newpage
\appendix
\section*{Appendix}

Here we provide the proofs of Lemmas \ref{lemma4}, \ref{lemma5}, and \ref{lemma6} stated in Section \ref{CaseReduction}.

\begin{proof}[\textbf{Proof of Lemma \ref{lemma4}}]
Let $0 \leq \delta \leq p_2$. We first prove that
\begin{multline}
\label{lemma4ineq2}
\Psi_j(p_1, p_2, \ldots, p_{j-1}, 0, \ldots, 0; q_1, q_2, \ldots, q_b)\\ \leq
\Psi_j(p_1 + \delta, p_2-\delta, \ldots, p_{j-1}, 0, \ldots, 0; q_1, q_2, \ldots , q_b).
\end{multline}
Using the definition of $\Psi_j$ in eq. \eqref{eq:defPsi}, inequality (\ref{lemma4ineq2}) can be restated by only considering the terms in the summation which differ in the two sides, that is, those corresponding to permutations $\sigma$ such that $1 \in \{ \sigma(1), \ldots, \sigma(j) \}$, $\sigma(j+1)=2$ and $2 \in \{ \sigma(1), \ldots, \sigma(j) \}$, $\sigma(j+1)=1$. This gives
\begin{multline*}
(p_1q_2 + p_2q_1) \sum_{\sigma \in Sym(3,\ldots, b)} q_{\sigma(3)} \cdots q_{\sigma(j+1)}
\\ \leq
((p_1+\delta)q_2 + (p_2-\delta)q_1) \sum_{\sigma \in Sym(3,\ldots, b)} q_{\sigma(3)} \cdots q_{\sigma(j+1)}\,.
\end{multline*}
Rearranging the terms we have
\begin{equation*}
\delta (q_2 - q_1) \sum_{\sigma \in Sym(3,\ldots, b)} q_{\sigma(3)} \cdots q_{\sigma(j+1)} \geq 0\,.
\end{equation*}
Therefore, inequality (\ref{lemma4ineq2}) is thus satisfied since $q_1 \leq q_2$ and $\delta \geq 0$. Moreover, given $h>i$ and $\delta$ such that $0 \leq \delta \leq p_h$, with the same argument we have
\begin{multline}
\label{lemma4ineq3}
\Psi_j(p_1,\ldots, p_i, \ldots, p_h,\ldots, p_{j-1}, 0, \ldots, 0; q_1, q_2, \ldots,q_b)\\ \leq
\Psi_j(p_1,\ldots, p_i+\delta, \ldots, p_h-\delta,\ldots, p_{j-1}, 0, \ldots, 0; q_1, q_2, \ldots,q_b).
\end{multline}
Using multiple times inequality (\ref{lemma4ineq3}) we get the following chain of inequalities
\begin{multline*}
\Psi_j(p_1,p_2,\ldots, p_{j-1}, 0, \ldots, 0; q_1, q_2, \ldots,q_b)\\ \leq
\Psi_j(p_1,\alpha,p'_3,\ldots, p'_{j-1}, 0, \ldots, 0; q_1, q_2, \ldots,q_b) \\ \leq
\Psi_j(1-\alpha,\alpha,0, \ldots, 0; q_1, q_2, \ldots,q_b) \,,
\end{multline*}
where $p'_3 + \ldots + p'_{j-1} = 1-p_1-\alpha$ and $p'_i \in [0,  1-\alpha]$ for $i = 3, \ldots, j-1$.
\end{proof}

\begin{proof}[\textbf{Proof of Lemma \ref{lemma5}}]
Using the definition of $\Psi_j$ in eq. \eqref{eq:defPsi}, inequality (\ref{lemma5ineq}) can be restated by only considering the terms in the summation which differ in the two sides, that is, those corresponding to permutations $\sigma$ such that $1 \in \{ \sigma(1), \ldots, \sigma(j) \}$, $\sigma(j+1)=2$ and $2 \in \{ \sigma(1), \ldots, \sigma(j) \}$, $\sigma(j+1)=1$ and $\{1, 2\} \subseteq \{ \sigma(1), \ldots, \sigma(j) \}$.
Hence inequality (\ref{lemma5ineq}) becomes:
\begin{align*}
&(p_1q_2 + p_2q_1)\sum_{\sigma \in Sym(3,\ldots, b)} p_{\sigma(3)} \cdots p_{\sigma(j+1)} + q_{\sigma(3)} \cdots q_{\sigma(j+1)} + \\
&(j-1)q_2q_1 \sum_{\sigma \in Sym(3,\ldots, b)} q_{\sigma(3)} \cdots q_{\sigma(j)} p_{\sigma(j+1)}
\\
&\leq p_1 (q_1 + q_2)\sum_{\sigma \in Sym(3,\ldots, b)} p_{\sigma(3)} \cdots p_{\sigma(j+1)} + q_{\sigma(3)} \cdots q_{\sigma(j+1)}.
\end{align*}
That is
\begin{align*}
&p_2q_1\sum_{\sigma \in Sym(3,\ldots, b)} p_{\sigma(3)} \cdots p_{\sigma(j+1)} + q_{\sigma(3)} \cdots q_{\sigma(j+1)} +
\\
\nonumber
&(j-1)q_2q_1 \sum_{\sigma \in Sym(3,\ldots, b)} q_{\sigma(3)} \cdots q_{\sigma(j)} p_{\sigma(j+1)}
\\
&\leq p_1 q_1\sum_{\sigma \in Sym(3,\ldots, b)} p_{\sigma(3)} \cdots p_{\sigma(j+1)} + q_{\sigma(3)} \cdots q_{\sigma(j+1)}.
\end{align*}
We have
\begin{align*}
&p_2q_1\sum_{\sigma \in Sym(3,\ldots, b)} p_{\sigma(3)} \cdots p_{\sigma(j+1)} + q_{\sigma(3)} \cdots q_{\sigma(j+1)} +
\\
&(j-1)q_2q_1 \sum_{\sigma \in Sym(3,\ldots, b)} q_{\sigma(3)} \cdots q_{\sigma(j)} p_{\sigma(j+1)}
\\
& \overset{(i)}{\leq}
q_1 \epsilon \sum_{\sigma \in Sym(3,\ldots, b)} p_{\sigma(3)} \cdots p_{\sigma(j+1)} + q_{\sigma(3)} \cdots q_{\sigma(j+1)} + \\ &(j-1) q_1 \epsilon \sum_{\sigma \in Sym(3,\ldots, b)} q_2 q_{\sigma(3)} \cdots q_{\sigma(j)} \nonumber
\\
& \overset{(ii)}{\leq}
q_1 \epsilon \sum_{\sigma \in Sym(3,\ldots, b)} p_{\sigma(3)} \cdots p_{\sigma(j+1)} + j q_1 \epsilon \sum_{\sigma \in Sym(3,\ldots, b)} q_{\sigma(3)} \cdots q_{\sigma(j+1)}
\\
& \overset{(iii)}{\leq}
(1-\epsilon) q_1\sum_{\sigma \in Sym(3,\ldots, b)} p_{\sigma(3)} \cdots p_{\sigma(j+1)} + q_{\sigma(3)} \cdots q_{\sigma(j+1)}
\\
& \overset{(iiii)}{\leq}
p_1 q_1\sum_{\sigma \in Sym(3,\ldots, b)} p_{\sigma(3)} \cdots p_{\sigma(j+1)} + q_{\sigma(3)} \cdots q_{\sigma(j+1)}
\end{align*}
Inequality $(i)$ holds because $p_2, p_3, \ldots, p_b \leq \epsilon$, inequality $(ii)$ because $q_2 \leq q_3 \leq \ldots \leq q_b$, inequality $(iii)$ due to the assumption $\epsilon \leq \frac{1}{j+1}$ and inequality $(iiii)$ since $p_1 \geq 1-\epsilon$.
\end{proof}

\begin{proof}[\textbf{Proof of Lemma \ref{lemma6}}]
Using the definition of $\Psi_j$ in eq. \eqref{eq:defPsi}, inequality (\ref{lemma6ineq}) can be restated by only considering the terms in the summation which differ in the two sides, that is, those corresponding to permutations $\sigma$ such that $1 \in \{ \sigma(1), \ldots, \sigma(j) \}$, $\sigma(j+1)=2$ and $2 \in \{ \sigma(1), \ldots, \sigma(j) \}$, $\sigma(j+1)=1$ and $\{1, 2\} \subseteq \{ \sigma(1), \ldots, \sigma(j) \}$. Therefore we have that
\begin{align*}
&((1-\epsilon+\delta)q_2 + q_1p_2) \sum_{\sigma \in Sym(3,\ldots, b)} p_{\sigma(3)} \cdots p_{\sigma(j+1)} + q_{\sigma(3)} \cdots q_{\sigma(j+1)} + \\
&(j-1)(1-\epsilon+\delta)p_2 \sum_{\sigma \in Sym(3,\ldots, b)} p_{\sigma(3)} \cdots p_{\sigma(j)} q_{\sigma(j+1)}
\\
&<
\left((1-\epsilon)q_2 + q_1(p_2+\delta)\right) \sum_{\sigma \in Sym(3,\ldots, b)} p_{\sigma(3)} \cdots p_{\sigma(j+1)} + q_{\sigma(3)} \cdots q_{\sigma(j+1)} + \\
&(j-1)(1-\epsilon)(p_2+\delta) \sum_{\sigma \in Sym(3,\ldots, b)} p_{\sigma(3)} \cdots p_{\sigma(j)} q_{\sigma(j+1)}.
\end{align*}
That is
\begin{align*}
&\delta (q_1-q_2) \sum_{\sigma \in Sym(3,\ldots, b)} p_{\sigma(3)} \cdots p_{\sigma(j+1)} + q_{\sigma(3)} \cdots q_{\sigma(j+1)} + \\
&(j-1)\delta(1-\epsilon-p_2) \sum_{\sigma \in Sym(3,\ldots, b)} p_{\sigma(3)} \cdots p_{\sigma(j)} q_{\sigma(j+1)}
> 0.
\end{align*}
Which is satisfied because $q_2 < q_1$,  $p_2 < 1-\epsilon$ and $\delta > 0$.
\end{proof}

\end{document}